     \newtheorem{theorem}{Theorem}[section]
     \newtheorem{corollary}{Corollary}[section]
     \theoremstyle{definition}
     \newtheorem{definition}{Definition}[section]
     \newtheorem{remark}{Remark}[section]
     \numberwithin{equation}{section}
     \newcommand{\R}{{\mathbb R}}
     \newcommand{\RR}{{\mathbb R}}
     \newcommand{\N}{{\mathbb N}}
      \def\1{\raisebox{2pt}{\rm{$\chi$}}}
     \newcommand{\subalign}[1]{%
       \vcenter{%
         \Let@ \restore@math@cr \default@tag
         \baselineskip\fontdimen10 \scriptfont\tw@
         \advance\baselineskip\fontdimen12 \scriptfont\tw@
         \lineskip\thr@@\fontdimen8 \scriptfont\thr@@
         \lineskiplimit\lineskip
         \ialign{\hfil$\m@th\scriptstyle##$&$\m@th\scriptstyle{}##$\hfil\crcr
           #1\crcr
         }%
       }%
     }
\def\vint_#1{\mathchoice%
          {\mathop{\kern 0.2em\vrule width 0.6em height 0.69678ex depth -0.58065ex
                  \kern -0.8em \intop}\nolimits_{\kern -0.4em#1}}%
          {\mathop{\kern 0.1em\vrule width 0.5em height 0.69678ex depth -0.60387ex
                  \kern -0.6em \intop}\nolimits_{#1}}%
          {\mathop{\kern 0.1em\vrule width 0.5em height 0.69678ex
              depth -0.60387ex
                  \kern -0.6em \intop}\nolimits_{#1}}%
          {\mathop{\kern 0.1em\vrule width 0.5em height 0.69678ex depth -0.60387ex
                  \kern -0.6em \intop}\nolimits_{#1}}}
\def\vintslides_#1{\mathchoice%
          {\mathop{\kern 0.1em\vrule width 0.5em height 0.697ex depth -0.581ex
                  \kern -0.6em \intop}\nolimits_{\kern -0.4em#1}}%
          {\mathop{\kern 0.1em\vrule width 0.3em height 0.697ex depth -0.604ex
                  \kern -0.4em \intop}\nolimits_{#1}}%
          {\mathop{\kern 0.1em\vrule width 0.3em height 0.697ex depth -0.604ex
                  \kern -0.4em \intop}\nolimits_{#1}}%
          {\mathop{\kern 0.1em\vrule width 0.3em height 0.697ex depth -0.604ex
                  \kern -0.4em \intop}\nolimits_{#1}}}
     \def\undertilde#1{\mathord{\vtop{\ialign{##\crcr
     $\hfil\displaystyle{#1}\hfil$\crcr\noalign{\kern1.5pt\nointerlineskip}
     $\hfil\tilde{}\hfil$\crcr\noalign{\kern1.5pt}}}}}
     \DeclareMathAlphabet{\mathpzc}{OT1}{pzc}{m}{it}
     \newcommand{\dist}{{\mathpzc{d}}}
     \title[The trace fractional Laplacian]{
     The trace fractional Laplacian and the mid-range fractional Laplacian
     }
\author[J. D. Rossi]{Julio D. Rossi}
\address{J. D. Rossi. Departamento de Matem\'atica, FCEyN, Universidad de Buenos Aires,
 Pabell\'on I, Ciudad Universitaria (1428),
Buenos Aires, Argentina.}
\email{jrossi@dm.uba.ar}
   \author[J. Ruiz-Cases]{Jorge Ruiz-Cases}
\address{J. Ruiz-Cases. 
Departamento de Matem\'aticas, Universidad Aut\'onoma de Madrid, 28049 Madrid, Spain and Instituto de Ciencias Matemáticas ICMAT (CSIC-UAM-UCM-UC3M),
28049 Madrid, Spain}
\email{jorge.ruizc@uam.es}
      \keywords{Fractional Laplacian, Viscosity solutions, Regularity. \\
\indent AMS-Subj Class: 35R11, 35R09, 26A33, 47G20. }
\begin{document}
     
     \begin{abstract}
    In this paper we introduce two new fractional versions of the Laplacian.
The first one is based on the classical formula that writes the usual Laplacian
    as the sum of the eigenvalues of the Hessian. The second one
    comes from
    looking at the classical fractional Laplacian as the mean value
    (in the sphere) of the 1-dimensional fractional Laplacians
    in lines with directions in the sphere. To obtain this second new
    fractional operator we just replace the mean value by the mid-range
    of 1-dimensional fractional Laplacians
    with directions in the sphere.
    For these two new fractional operators we prove
    a comparison principle for viscosity sub and supersolutions 
    and then we obtain existence and uniqueness for the Dirichlet
    problem. Finally, we prove that for the first operator we recover the classical
    Laplacian in the limit as $s\nearrow 1$.
\end{abstract}

     \maketitle
     
       \section{Introduction}
       The classical Laplacian is the well known operator
       \begin{equation} \label{ec-Lapla-1}
       \Delta u (x) = \mbox{div} (\nabla u (x) ).
       \end{equation}
       This may be the best known and most famous second order differential operator.
       Written as in \eqref{ec-Lapla-1} it is an operator in divergence form. This allows to use techniques form calculus of variations a framework in which solutions are understood in a weak sense integrating against test functions (typically solutions are functions in the Sobolev space $H^1$). When one introduces coefficients in this context a natural operator to look at is
       \begin{equation} \label{ec-Lapla-A}
       L u (x) = \mbox{div} ( A (x) \nabla u (x) ),
       \end{equation}
       with a given matrix (that is usually assumed to be symmetric) with spatial dependence, $A (x)$, see for instance \cite{AM,No}. 
       
       A different way of writing the Laplacian is as
        \begin{equation} \label{ec-Lapla-2}
       \Delta u (x) = \mbox{tr} (D^2 u(x)) = \sum_{i=1}^N \lambda_i (D^2 u (x)). 
       \end{equation}
       Here $\lambda_1(D^2 u) \leq \lambda_2 (D^2 u) \leq ... \leq \lambda_N(D^2 u)$ stands for the eigenvalues of the Hessian, $D^2u = (\partial^2_{ij} u)_{ij}$. 
       This way of writing the Laplacian is not in divergence form but as an operator for which solutions are understood in viscosity sense \cite{CIL} (here solutions are just continuous functions and the operator is applied to smooth test functions that touches the solution from
       above or below).  
       Introducing coefficients thinking in this way one finds 
       \begin{equation} \label{ec-Lapla-A-2}
       F (D^2u) (x) = \mbox{tr} ( A (x) D^2 u (x) )
       \end{equation}
       with an $x-$dependent matrix $A (x)$, see \cite{CaCa}. 
       
       For the classical Laplacian both \eqref{ec-Lapla-1} and \eqref{ec-Lapla-2}
are equivalent ways of writing the same operator. For the Dirichlet problem for $\Delta u =0$ the notions of weak and viscosity solutions coincide (and in fact the Dirichlet problem
has a unique classical solution), see \cite{Hi} and \cite{RWZ} (the equivalence between weak and
viscosity solutions include quasi-linear equations, \cite{JLM}, and some non-local equations,
\cite{BM,dBdPO}). 
However, when one introduces coefficients, the operators \eqref{ec-Lapla-A} and 
\eqref{ec-Lapla-A-2} are not equivalent (in fact, the notion of weak solution using Sobolev spaces is not appropriate to deal with \eqref{ec-Lapla-A-2}). 

In recent years an operator that has become quite popular is the well-known
 fractional Laplacian defined as
         \begin{equation} \label{ec-Frac-Lapla-1}
      (- \Delta)^s u (x) = c(s) \, p.v.  \int_{\mathbb{R}^N} 
                             \frac{u(x)-u(y)}{|x-y|^{N+2s}} 
                             \, dy.
       \end{equation}
       Here $s \in (0,1)$, $p.v$ refers to the principal value of the integral and $c(s)$ is a constant that depends also on the dimension $N$
        and goes to zero as $(1-s)$ as $s \to 1$
       (we will make explicit and use the constant only in dimension 1). 
       For several different ways of writing the fractional Laplacian we refer to \cite{ten}.
       The operator in \eqref{ec-Frac-Lapla-1} is also well suited for variational methods 
(and typically solutions are functions in the fractional Sobolev space $H^s$). 
In this context one can introduce spatial dependence in the operator using a general symmetric kernel $k(x,y)$ (this symmetry assumption allows to integrate by parts and use variational techniques) and consider 
  \begin{equation} \label{ec-Frac-Lapla-1-A}
      (- \Delta)^s u (x) = p.v. \int_{\mathbb{R}^N} 
                             (u(x)-u(y) ) k(x,y) 
                             \, dy.
       \end{equation}
       Here a natural assumption is to ask that the kernel $k$ is comparable to the
       one of the fractional Laplacian, in the sense that, for two positive constants 
       $c_1$, $c_2,$ we have $c_1 |x-y|^{-N-2s} \leq k(x,y)
       \leq c_2 |x-y|^{-N-2s} $. 
       One possible choice of the kernel is given by $k(x,y) = |A (x-y)|^{-N-2s}$, obtaining an operator that is similar to \eqref{ec-Lapla-A}.
       When one wants to look for non-divergence form nonlocal operators one can consider
       non-symmetric kernels $k(x,y)$. These kind of operators have been intensively studied
       recently, we refer to \cite{CaS1,CaS2,CaS3,CLD,DiCKP,DiCKP2,Kim,Sil} and references therein.  
       
       However, up to now, there is no clear analogous to the classical way of understanding the Laplacian as the sum of the eigenvalues of the Hessian, as in \eqref{ec-Lapla-2}. 
       Our main goal in the paper is to introduce a new nonlocal operator that is a natural analogous to this way of looking at the classical Laplacian. To this end we first recall that, from the classical Courant-Hilbert formulas for the eigenvalues of a symmetric $N\times N$ matrix, we have
       \begin{equation} \label{eigen-matrix}
       \lambda_i (D^2 u(x) ) = \max_{{dim} (S) = N-i+1}
       \min_{\substack{z \in S, \\ |z| =1}} \langle D^2 u(x) z, z \rangle .
       \end{equation}
       Here the maximum is taken among all possible subspaces $S$ of $\mathbb{R}^N$ of dimension $N-i+1$ and the minimum among unitary vectors in $S$. 
For the Dirichlet problem for the equation $\lambda_i (D^2 u(x) ) =0$ we refer to 
\cite{BlancRossi}.
 Related operators are the truncated Laplacians studied in \cite{BGH11,BGH22,BGH33}.
Notice that \eqref{eigen-matrix} can be written as 
  \begin{equation} \label{eigen-matrix.88}
       \lambda_i (D^2 u(x) ) = \max_{{dim} (S) = N-i+1}
       \min_{ \substack{z \in S, \\ |z| =1}} \frac{\partial^2 u}{\partial z^2}(x) ,
       \end{equation}
       that is, the $i-$th eigenvalue is the max-min of the pure second derivatives
       computed 
       along directions in subspaces of dimension $N-i+1$.
       Therefore, a natural fractional version of the eigenvalues (that we will call fractional eigenvalues) 
       is given by 
       \begin{equation} \label{fract-eigen}
       \Lambda_i^s u (x) = \max_{{dim} (S) = N-i+1}
       \min_{ \substack{z \in S, \\ |z| =1}}  \left\{ c(s)\, p.v. \int_{\mathbb{R}} 
                             \frac{u(x+tz)-u(x)}{|t|^{1+2s}} 
                             \, dt
                         \right\},
       \end{equation}
       that is, we are computing the same max-min procedure as before, but now we are 
       taking the one-dimensional fractional derivative of order $2s$. 
       The Dirichlet problem for the first fractional eigenvalue is related to fractional convex envelopes, see \cite{dPQR}. Operators that are fractional analogous to truncated laplacians are studied in \cite{Bi1,Bi2,Bi3,dPQR}.
       Notice that due to their definition, the fractional eigenvalues are ordered
\[\Lambda_1^s u (x) \leq \Lambda_2^s u (x) \leq \cdots \leq \Lambda_{N-1}^s u (x) \leq \Lambda_N^s u (x).\]

       Now, let us introduce the operator that we call the {\it trace fractional Laplacian}, 
       \begin{equation} \label{tr-frac-Lapla}
       (-\Delta)^s_{tr} (u) (x) = -\sum_{i=1}^N \Lambda_i^s u (x) .      
        \end{equation}
       Here $\Lambda_i^s u$ is given by \eqref{fract-eigen}. 
             Notice that \eqref{tr-frac-Lapla} is not in divergence form and therefore we will use
       viscosity theory to study this operator. 
        We remark that this fractional version of the classical 
Laplacian is not equivalent to the usual fractional Laplacian given by 
\eqref{ec-Frac-Lapla-1}. This is a striking difference between the fractional setting and the classical local context (the variational fractional Laplacian does not coincide with the trace fractional Laplacian).

Our main goal in this paper is to show that the Dirichlet problem
for the trace fractional Laplacian is well posed in the framework of viscosity solutions.
Given a bounded domain $\Omega$ and an exterior datum
$g$ 
we will deal with
        \begin{equation} \label{Main-eq-intro} 
        \left\{
             \begin{array}{ll}
                \displaystyle (-\Delta)^s_{tr} (x) = - \sum_{i=1}^N \Lambda_i^s u (x) =0, \qquad & x \in \Omega, \\[6pt]
                         u (x) = g (x), \qquad & x \in \mathbb{R}^N\setminus \Omega.
             \end{array}
             \right.
         \end{equation}
         
         Since we want a continuous up to the boundary solution (some of our arguments
         requiere this property) we will assume 
         that 
         \begin{equation} \label{s>1/2}
         s \in (1/2,1).
         \end{equation}

Now, let us introduce a second fractional operator that we will
call the {\it mid-range fractional Laplacian}. 
To this end, notice that the classical fractional Laplacian, given by
\eqref{ec-Frac-Lapla-1}, of a smooth function
that decays at infinity can be written as
    \begin{equation} \label{ec-Frac-Lapla-1.radial}
    \begin{array}{l}
\displaystyle       (- \Delta)^s \phi (x) = c(s)\int_{\mathbb{R}^N} 
                             \frac{\phi (x)-\phi(y)}{|x-y|^{N+2s}} 
                             \, dy \\[10pt]
                           \displaystyle \qquad \qquad  
                           = -\vint_{\mathbb{S}^{N-1} }
                            c(s) \int_{\mathbb{R}} 
                             \frac{\phi (x+tz)-\phi(x)}{|t|^{1+2s}} 
                             \, dt \,  d\sigma (z),
                             \end{array}
       \end{equation}
       with $\mathbb{S}^{N-1} =\{ z \in \mathbb{R}^N : |z|=1 \}$.
       Therefore, the fractional Laplacian can be written (up to a negative constant) as the mean value
       (on the sphere) of the function $\Theta_\phi^s: \mathbb{S}^{N-1} \to \mathbb{R}$ given by
       $$
       \Theta_\phi^s (z) =
       \int_{\mathbb{R}} 
                             \frac{\phi (x+tz)-\phi(x)}{|t|^{1+2s}} 
                             \, dt.
       $$
       That is, to obtain the fractional Laplacian
       one computes the mean value in the directions of 
       the one-dimensional fractional derivatives of order $2s$. 
       With this idea in mind let us introduce 
       a different fractional operator. 
       Instead of the mean value we just take the mid-range
       (the measure of central tendency that is given by the average of the lowest and highest values in a set of data)
       of the function $\Theta_\phi^s$ in $\mathbb{S}^{N-1}$ and we
       obtain
       \begin{equation} \label{mid-range-Laplacian}
       \begin{array}{l}
       \displaystyle 
       (- \Delta)_{mid}^s \phi (x) = -
       \frac12 \min_{ |z| =1}  \Theta_\phi^s (z) - \frac12 \max_{ |z| =1}  \Theta_\phi^s (z)
       \\[10pt]
        \displaystyle \quad  = \! - \frac12
       \min_{ |z| =1} \!   \left\{ \!  c(s) \int_{\mathbb{R}} 
                             \frac{\phi(x+tz)-\phi(x)}{|t|^{1+2s}} 
                             \, dt \! 
                         \right\} \!  - \! \frac12 \max_{|z| =1} \!  \left\{\!  c(s) \int_{\mathbb{R}} 
                             \frac{\phi(x+tz)-\phi(x)}{|t|^{1+2s}} 
                             \, dt
                         \right\}\! 
                         \\[10pt]
        \displaystyle  \quad = -\frac12 \Lambda_1^s \phi (x) - \frac12
        \Lambda_N^s \phi (x).
                         \end{array}
          \end{equation}
          
          The mid-range and the trace fractional Laplacians are closely related.
          They are part of a
          large family of operators (those given in terms of combinations of
          fractional eigenvalues, we will add more comments on this in the final section of this paper). Moreover, in dimension two, that is, for $N=2$ in our notation, the mid-range and the trace fractional Laplacians coincide
    up to a constant
    $$(- \Delta)_{mid}^s u(x)  = - \frac12 \Lambda_1^s \phi (x) -  \frac12
        \Lambda_2^s \phi (x) = \frac12 (- \Delta)_{tr}^s u(x).$$
        In addition, remark that these two new fractional operators that we introduced here, 
          $(- \Delta)_{tr}^s$  and $(- \Delta)_{mid}^s$, are
          1-homogeneous (it holds that
          $(- \Delta)_{tr}^s (ku)= k (- \Delta)_{tr}^s (u)$ and 
          $(- \Delta)_{mid}^s (ku) = k (- \Delta)_{mid}^s (u) $ for $k\in \mathbb{R}$),
          they have a scaling invariance of order $2s$ (for $w(x) = u(kx)$ we have
          $(- \Delta)_{tr}^s (w)(x)) = k^{2s} (- \Delta)_{tr}^s (u)(kx) $
          and $(- \Delta)_{mid}^s (w )(x) = k^{2s} (- \Delta)_{mid}^s (u)(kx) $),
          and are invariant
          under rotations, as the usual Laplacians (both local and fractional) are.

        For the mid-range fractional Laplacian we also study the Dirichlet problem,
        that in this case reads as
          \begin{equation} \label{Main-eq-section.iiii} 
        \left\{
             \begin{array}{ll}
                \displaystyle \Lambda_1^s u (x)+ \Lambda_N^s u (x)  =0, \qquad & x \in \Omega, \\[6pt]
                         u (x) = g (x), \qquad & x \in \mathbb{R}^N\setminus \Omega.
             \end{array}
             \right.
         \end{equation}   
Here we dropped the constant $1/2$ in front of the fractional eigenvalues to simplify the notation.

         Our first result says that when the domain is smooth and
         the exterior datum $g$ is continuous and bounded, 
         there is a unique viscosity solution for
         \eqref{Main-eq-intro} or for \eqref{Main-eq-section.iiii}.
         
          \begin{theorem} \label{teo.existencia-uni.intro} Fix $s\in (1/2,1)$.
        Given a $C^2$ bounded domain $\Omega$, and a continuous and bounded exterior datum $g$, there exists a unique viscosity solution to \eqref{Main-eq-intro}
        or to \eqref{Main-eq-section.iiii}.
      \end{theorem}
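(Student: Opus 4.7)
The proof strategy is the standard viscosity-solution triad adapted to the nonlocal max-min setting: establish a comparison principle, build boundary barriers, and then invoke Perron's method. Since both operators in the statement are $1$-homogeneous, rotation-invariant combinations of the fractional eigenvalues $\Lambda_i^s$, the same argument handles both cases; I focus the discussion on $(-\Delta)^s_{tr}$, with the mid-range case being analogous.

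\textbf{Comparison principle.} Given a bounded viscosity subsolution $u$ and supersolution $v$ with $u\le v$ on $\mathbb{R}^N\setminus\Omega$, I would argue by contradiction: assume $M:=\sup_{\overline\Omega}(u-v)>0$ and apply Ishii--Crandall doubling of variables to
\[
\Phi_\varepsilon(x,y) = u(x)-v(y) - \tfrac{1}{2\varepsilon}|x-y|^2 - \eta(|x|^2+|y|^2),
\]
extracting maximizers $(x_\varepsilon,y_\varepsilon)$ converging to an interior point $\bar x\in\Omega$ with $|x_\varepsilon-y_\varepsilon|^2/\varepsilon\to 0$. The quadratic $\tfrac{1}{2\varepsilon}|\,\cdot\,-y_\varepsilon|^2$ then provides a test function for $u$ from above at $x_\varepsilon$, and symmetrically for $v$ from below at $y_\varepsilon$. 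For each direction $z\in\mathbb{S}^{N-1}$, split the one-dimensional integral into the inner part $|t|<\delta$, where the quadratics control the integrand, and the outer part $|t|\ge\delta$, where one compares $u(x_\varepsilon+tz)$ with $v(y_\varepsilon+tz)$; the difference $(u-v)$ is bounded above by $M+o(1)$ and the shift $x_\varepsilon-y_\varepsilon\to 0$ can be absorbed using $2s>1$. Taking $\max$--$\min$ over subspaces $S\subseteq\mathbb{R}^N$ uniformly in $z$ and summing over $i=1,\dots,N$ yields $(-\Delta)^s_{tr}u(x_\varepsilon)\le (-\Delta)^s_{tr}v(y_\varepsilon)+o(1)$, contradicting the sub/supersolution inequalities.

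\textbf{Boundary barriers.} The $C^2$ regularity of $\partial\Omega$ furnishes a uniform interior ball condition. For $\xi\in\partial\Omega$, $\eta>0$, and a ball $B_R(x_0)\subset\Omega$ with $\xi\in\partial B_R(x_0)$, I would consider a barrier of the form
\[
\Psi_\xi^+(x) = g(\xi) + \eta + \omega_g\bigl(|x-\xi|\bigr) + C\bigl(R^{2s-1}-|x-x_0|^{2s-1}\bigr)_+,
\]
where $\omega_g$ is a modulus of continuity of $g$ and $C$ is a large constant. The key verification is that, for every direction $z\in\mathbb{S}^{N-1}$, the one-dimensional fractional derivative of the radial profile $(R^{2s-1}-r^{2s-1})_+$ is bounded below by a strictly positive constant near $\xi$ (with a sign depending on the component of $z$ normal to $\partial\Omega$ at $\xi$); this is exactly the place where the hypothesis $s>1/2$ is essential, both for the integrability at $r=R$ and for the right boundary H\"older rate $2s-1>0$, ensuring $\Psi_\xi^+(\xi)=g(\xi)$ continuously. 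A symmetric construction yields subsolution barriers $\Psi_\xi^-$.

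\textbf{Perron's method and conclusion.} Define
\[
\overline u(x) := \sup\bigl\{\, w(x) : w \text{ is a viscosity subsolution, } w\le g \text{ on } \mathbb{R}^N\setminus\Omega,\ w\le \Psi_\xi^+ \text{ for all }\xi\in\partial\Omega \,\bigr\}.
\]
The usual Perron machinery for nonlocal equations (as in Barles--Imbert) shows the upper semicontinuous envelope of $\overline u$ is a subsolution and its lower semicontinuous envelope is a supersolution of the interior equation; by the comparison principle in Step~1, they coincide, so $\overline u$ is a continuous viscosity solution. The sandwich $\Psi_\xi^-\le\overline u\le\Psi_\xi^+$ at each boundary point produces continuity up to $\partial\Omega$ with $\overline u=g$ there, and uniqueness is immediate from comparison. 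I expect the main obstacle to be Step~1: handling the tails of the one-dimensional nonlocal integrals uniformly over $z\in\mathbb{S}^{N-1}$, since the $\max$--$\min$ structure forces the directional estimates to be genuinely uniform rather than averaged as in the classical fractional Laplacian.
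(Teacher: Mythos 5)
There is a genuine gap in your comparison step, and it is exactly the point on which the paper's proof turns. After doubling variables you estimate the tails by ``$(u-v)$ is bounded above by $M+o(1)$'' and conclude $(-\Delta)^s_{tr}u(x_\varepsilon)\le(-\Delta)^s_{tr}v(y_\varepsilon)+o(1)$; but the sub/supersolution inequalities only give $(-\Delta)^s_{tr}u\le 0\le(-\Delta)^s_{tr}v$, so this chain is perfectly consistent and yields no contradiction. The operator has no zeroth-order term, hence no strict properness, so a non-strict estimate of the tails cannot close the argument. What the paper uses is that, along (almost) optimal directions $z_0$, the one-dimensional line through the interior limit point $\bar x$ necessarily crosses $\mathbb{R}^N\setminus\Omega$, where $u^g\le g\le v_g$ while $u(x_\varepsilon)-v(y_\varepsilon)\to M>0$; the tail integrals then produce the strictly negative quantity $-M\int_{L_{z_0}(\bar x)\cap(\mathbb{R}^N\setminus\Omega)}|t|^{-1-2s}\,dt<0$ (once for the direction approximating the infimum and once for the one approximating the supremum), and this strict negativity is the contradiction. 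Your sketch never extracts this exterior-tail term, so the comparison principle is not established.

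A second, related gap: you assert that the doubled maximizers converge to an interior point $\bar x\in\Omega$, but with the relaxed Dirichlet condition of the nonlocal viscosity framework a subsolution may exceed $g$ on $\partial\Omega$, so the maximum of $u^g-v_g$ could sit on the boundary. The paper excludes this by first proving a boundary-attainment theorem ($u\le g$ and $v\ge g$ on $\partial\Omega$) via a penalized test function $a((\cdot-x_0))/\varepsilon+b(\dist(\cdot)/\varepsilon)$; that argument is where $C^2$ regularity and $s>1/2$ enter, and its key trick is the structural freedom of the operator (for a subsolution one chooses the direction in the infimum pointing to the nearest boundary point, making that term of size $-C\delta_\varepsilon^{-2s}$, and absorbs the supremum term of size $\varepsilon^{-2s}$). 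Your barriers $\Psi_\xi^\pm$ could in principle play this role, but they appear only in the Perron step, are not wired into the comparison argument, and the claim that the radial profile $(R^{2s-1}-|x-x_0|^{2s-1})_+$ gives a supersolution of the max--min operator uniformly over all directions $z$ is asserted rather than proved (for a supersolution you may choose the direction in the supremum, but you must control the infimum over every direction, including tangential ones). Your overall architecture (comparison, barriers, Perron) matches the paper's, and the Perron step as you describe it is fine once comparison and boundary behavior are in place; the missing ingredients are the strict exterior-tail estimate and the boundary attainment (or a verified barrier) feeding into it.
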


         These operators share many properties
         with  the corresponding local Laplacian and the classical fractional Laplacian  
         (as the validity of a comparison principle
         and a strong maximum principle and the continuous
         dependence of the solutions on the exterior data).

         \begin{theorem} \label{teo.compar.intro}
         Under the hypothesis of Theorem \ref{teo.existencia-uni.intro},
         a comparison principle holds,
        let $u_1$ and $u_2$ denote the solutions to \eqref{Main-eq-intro}
        (or to \eqref{Main-eq-section.iiii})
        with exterior data $g_1$ and $g_2$ respectively, then
        \begin{equation} \label{compar}
        g_1 \geq g_2 \mbox{ in } \mathbb{R}^N \setminus \Omega
        \implies 
u_1 \geq  u_2  \mbox{ in }\Omega  
                \end{equation}
        and, as a consequence, the solution depends continuously
        on the exterior data; it holds that
        \begin{equation}
        \| u_1 - u_2 \|_{L^\infty (\Omega)} \leq \| g_1 - g_2 \|_{L^\infty (\mathbb{R}^N \setminus \Omega)}.
        \end{equation}
        
        Moreover, a strong maximum principle holds
        both for \eqref{Main-eq-intro}
        and for \eqref{Main-eq-section.iiii}. If there exists $x_0 \in \Omega$
        such that
        $$
       \sup_{y\in \mathbb{R}^N \setminus \Omega} g(y) \leq u(x_0)
        $$
        then, the solution and the exterior datum are constant, $u\equiv g \equiv cte$. 
         \end{theorem}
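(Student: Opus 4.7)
The plan is to first establish the comparison principle \eqref{compar} as an application of the general comparison theorem between viscosity sub- and supersolutions (which the existence result of Theorem~\ref{teo.existencia-uni.intro} relies on), then deduce the continuous dependence by a shift-by-constants argument, and finally prove the strong maximum principle by testing the solution from above at an interior maximum with the constant test function.

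For the first assertion I would simply observe that $u_1$ is in particular a viscosity supersolution with boundary data $g_1$, and $u_2$ is in particular a viscosity subsolution with boundary data $g_2 \leq g_1$ in $\mathbb{R}^N \setminus \Omega$; the general comparison principle then yields $u_2 \leq u_1$ in $\Omega$. For the $L^\infty$-estimate set $C := \|g_1-g_2\|_{L^\infty(\mathbb{R}^N\setminus\Omega)}$. Because each fractional eigenvalue $\Lambda_i^s u$ depends only on the differences $u(x+tz)-u(x)$, both $(-\Delta)^s_{tr}$ and $(-\Delta)^s_{mid}$ are invariant under the addition of constants, so $u_2\pm C$ solves the same equation with exterior data $g_2\pm C$; since $g_2 - C \leq g_1 \leq g_2 + C$ in $\mathbb{R}^N\setminus\Omega$, the comparison just proved gives $u_2 - C \leq u_1 \leq u_2 + C$ in $\Omega$.

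For the strong maximum principle I set $M := \sup_{y \in \mathbb{R}^N \setminus \Omega}g(y)$. Comparing $u$ with the constant function $M$ (which solves the equation and dominates $g$ outside $\Omega$) gives $u \leq M$ throughout $\mathbb{R}^N$, and the hypothesis forces $u(x_0) = M$. Since $\phi \equiv M$ is a smooth test function touching $u$ from above at $x_0$, the viscosity subsolution condition, applied to the stop-loss function $\phi_r(y):=M$ for $y \in B_r(x_0)$ and $\phi_r(y):=u(y)$ for $y \notin B_r(x_0)$, gives $(-\Delta)^s_{tr}\phi_r(x_0)\leq 0$ (respectively $(-\Delta)^s_{mid}\phi_r(x_0)\leq 0$). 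A direct evaluation using $u\leq M$ yields, for every $i$,
\[
\Lambda_i^s \phi_r(x_0) = \max_{\dim S = N-i+1}\;\min_{\substack{z \in S \\ |z|=1}}\, c(s)\int_{|t|\geq r}\frac{u(x_0+tz)-M}{|t|^{1+2s}}\,dt \ \leq\ 0,
\]
because each integrand in the inner minimum is non-positive. Combining this with $\sum_i\Lambda_i^s\phi_r(x_0)\geq 0$ for the trace operator, or with $\Lambda_1^s \phi_r(x_0)+\Lambda_N^s\phi_r(x_0)\geq 0$ for the mid-range operator, forces the relevant eigenvalues to vanish; in particular $\Lambda_1^s \phi_r(x_0)=0$.

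Taking $S=\mathbb{R}^N$ in the max-min definition of $\Lambda_1^s$, the identity $\Lambda_1^s \phi_r(x_0)=0$ reads $\min_{|z|=1}\int_{|t|\geq r}(u(x_0+tz)-M)|t|^{-1-2s}\,dt = 0$; since every such integral was already $\leq 0$, all of them must vanish, and the strict positivity of the kernel together with $u\leq M$ forces $u(x_0+tz)=M$ for every unit $z$ and every $|t|\geq r$. Thus $u\equiv M$ on $\mathbb{R}^N\setminus B_r(x_0)$, and letting $r\searrow 0$ and invoking continuity of $u$ gives $u\equiv M$ everywhere in $\mathbb{R}^N$, hence $g\equiv M$ on $\mathbb{R}^N\setminus\Omega$. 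The main obstacle I anticipate is the precise invocation of the general viscosity comparison principle for these max-min nonlocal operators; once that is in hand, everything else reduces to the shift-by-constants argument and the one-shot constant-test-function calculation sketched above.
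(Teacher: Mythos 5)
Your proposal is correct and follows essentially the same route as the paper: the ordering of solutions and the $L^\infty$ continuous-dependence bound are obtained exactly as in the paper's corollaries of the comparison principle (Theorem \ref{teo:cp}), by viewing $u_2$, respectively $u_2 \pm \|g_1-g_2\|_{L^\infty(\mathbb{R}^N\setminus\Omega)}$, as a sub/supersolution for the datum $g_1$. Your strong maximum principle argument --- constant test function at the interior maximum point, all directional integrals nonpositive, the subsolution inequality forcing them (in particular the ones coming from $\Lambda_1^s$, where the infimum ranges over all directions) to vanish, hence $u^g\equiv M$ --- is the same mechanism as the paper's proof, just phrased directly rather than by contradiction, and it correctly supplies the reduction $u\le M:=\sup g$, $u(x_0)=M$ (via comparison with the constant $M$) that links the stated hypothesis to an interior maximum of $u^g$.
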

         
         A striking difference with the classical Laplacian (and also with the 
         classical fractional Laplacian) is that these operators are nonlinear.

          \begin{theorem} \label{teo-no-lineal.intro}
         The problems \eqref{Main-eq-intro} and \eqref{Main-eq-section.iiii} are nonlinear problems. There exists
         data $g_1$, $g_2$ such that
         \begin{equation} \label{compar.4455}
        g_1\gneqq  g_2 \mbox{ in } \mathbb{R}^N \setminus \Omega,
        \end{equation}
       but the corresponding solutions verify 
       \begin{equation} \label{real-madrid} 
u_1 \equiv  u_2  \mbox{ in }\Omega. 
                \end{equation}
                              \end{theorem}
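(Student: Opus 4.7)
The plan is to exhibit an explicit pair of exterior data $g_1\gneqq g_2$ for which the corresponding Dirichlet problems share the same interior solution. The structural reason such examples must exist is that both $(-\Delta)^s_{tr}$ and $(-\Delta)^s_{mid}$ depend on the exterior data only through extremes (over unit directions $z$) of the one-dimensional quantity
\[
\Theta^s_u(x,z)=c(s)\int_{\R}\frac{u(x+tz)-u(x)}{|t|^{1+2s}}\,dt,
\]
so a non-negative perturbation of $g_2$ that leaves these extremes unchanged should be invisible to the equation. Once such invisibility is engineered, uniqueness from Theorem \ref{teo.existencia-uni.intro} will force $u_1\equiv u_2$ in $\Omega$.

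Concretely I would take $\Omega=B_1$ and fix a non-trivial viscosity solution $u_2$ with continuous bounded datum $g_2$. Denote by $z^*(x)$ a direction attaining $\Lambda_N^s u_2(x)=\max_z\Theta^s_{u_2}(x,z)$ and by $z_*(x)$ one attaining $\Lambda_1^s u_2(x)=\min_z\Theta^s_{u_2}(x,z)$. I then construct $h\geq 0$ supported in an open set $K\subset\R^N\setminus\Omega$ disjoint from every critical line $\{x+tz^*(x):t\in\R\}\cup\{x+tz_*(x):t\in\R\}$, $x\in\Omega$, and with $\|h\|_\infty$ small enough that
\[
\delta(x,z):=c(s)\int_{\{t:\,x+tz\in K\}}\frac{h(x+tz)}{|t|^{1+2s}}\,dt
\]
satisfies $\delta(x,z)\leq \Lambda_N^s u_2(x)-\Theta^s_{u_2}(x,z)$ for every $z\in\mathbb{S}^{N-1}$. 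By construction $\delta(x,z^*(x))=\delta(x,z_*(x))=0$ while $\delta\geq 0$, hence the perturbed profile $\Theta^s_{u_2}(x,\cdot)+\delta(x,\cdot)$ still attains the same maximum at $z^*(x)$ and the same minimum at $z_*(x)$; because $\max_S\min_{z\in S}$ is monotone in its argument, every eigenvalue $\Lambda_i^s u_2(x)$ is left unchanged. Consequently $u_2$ is also a viscosity solution of the same Dirichlet problem with $g_1:=g_2+h\gneqq g_2$, and Theorem \ref{teo.existencia-uni.intro} gives \eqref{real-madrid}.

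The main obstacle is making the construction effective. First, one must exhibit a specific $g_2$ for which the critical set $\bigcup_{x\in\Omega}(\{x+tz^*(x)\}\cup\{x+tz_*(x)\})$ has non-empty open complement in $\R^N\setminus\Omega$; this is achieved by choosing $g_2$ with enough symmetry that $z^*(x),z_*(x)$ are traced out as an $(N-1)$-dimensional ruled family, but not so much symmetry as to sweep out the whole exterior. Second, for the trace operator in dimension $N\geq 3$ the intermediate eigenvalues $\Lambda_i^s$, $1<i<N$, must also be preserved, which forces $\delta$ to vanish on further critical subsets arising from the max--min over $(N-i+1)$-dimensional subspaces; this is arranged most easily in the case $N=2$, where the two operators coincide up to a constant and only $\Lambda_1^s$ and $\Lambda_N^s$ intervene, providing simultaneously a counterexample for both \eqref{Main-eq-intro} and \eqref{Main-eq-section.iiii}. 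The smallness condition on $\delta$ is then met by a straightforward scaling argument (replacing $h$ by $\varepsilon h$), using that the gap $\Lambda_N^s u_2(x)-\Theta^s_{u_2}(x,z)$ is strictly positive away from $z^*(x)$ for a generic choice of $g_2$.
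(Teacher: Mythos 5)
Your strategy is the same one the paper uses: perturb the exterior datum by a nonnegative bump that is invisible to the directional quantities $\Theta^s_u(x,z)$ whose extremes define the operator, and then invoke uniqueness. But as written there is a genuine gap: the whole proof rests on the existence of a datum $g_2$ (with its solution $u_2$) for which (a) the maximizing and minimizing directions $z^*(x),z_*(x)$ are well defined and the union of the corresponding critical lines over $x\in\Omega$ misses an open exterior set, and (b) the gap $\Lambda_N^s u_2(x)-\Theta^s_{u_2}(x,z)$ is bounded below, uniformly in $x\in\Omega$, on the set of directions $z$ whose lines actually meet the support of the bump (and symmetrically for the minimum). You only assert these properties hold ``for a generic choice of $g_2$''; they are not generic facts one can take for granted --- for an arbitrary solution the extremal directions need not be unique, they vary with $x$, and the critical lines can sweep out all of $\R^N\setminus\Omega$. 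Moreover your smallness argument ``replace $h$ by $\varepsilon h$'' only works after one knows the bump is invisible from a full neighborhood of the extremal directions (otherwise both $\delta(x,z)$ and the gap vanish as $z\to z^*(x)$ and scaling does not settle the comparison), which again is a quantitative property of a specific example, not of a generic one. In short, the heart of the proof --- an effective construction --- is missing.

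The paper supplies exactly this missing ingredient with an explicit separable example in $N=2$: take $a$ solving the one-dimensional problem $(-\Delta)^s a=1$ on an interval and set $u(x,y)=a(x)-a(y)$. Then for a unit vector $v=(v_x,v_y)$ one computes $\Theta^s_u\bigl((x,y),v\bigr)=|v_x|^{2s}-|v_y|^{2s}$, which is independent of the base point, so the maximum and minimum are attained at the coordinate directions $(1,0)$ and $(0,1)$ uniformly in $\Omega=B_1$, the sum is $1-1=0$, and the gap away from the axes is uniformly positive. A small bump $\varepsilon f$ supported near a far-away point on the diagonal is then seen only by near-diagonal directions, leaving both extremes (and hence the equation) unchanged, so $g_1=u+\varepsilon f\gneqq g_2=u$ share the solution $u$ in $B_1$. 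If you want to complete your argument along your own lines, you must either produce such a concrete $u_2$ with the uniform nondegeneracy properties you list, or reduce to an explicit computation of this type; also note that the restriction to $N=2$ (where $(-\Delta)^s_{tr}$ and $(-\Delta)^s_{mid}$ coincide up to a constant), which you correctly identify as the easy case for the intermediate eigenvalues, is precisely how the paper handles both problems at once.
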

                 
                 That the operators are nonlinear is due to the fact that there are maxima and minima
         involved in the definition of $\Delta_{tr}^s$ and $\Delta_{mid}^s$. In fact, what is really surprising is that
         the usual local Laplacian, written as 
         $$
         \Delta u (x) = \sum_{i=1}^N \lambda_i (D^2 u(x))
         = \sum_{i=1}^N \max_{{dim} (S) = N-i+1}
       \min_{ \substack{z \in S, \\ |z| =1}} \frac{\partial^2 u}{\partial z^2}(x),
         $$
         is a linear operator.

         In addition, we study the limit as $s\nearrow 1$ and prove that solutions to 
         \eqref{Main-eq-intro} converge uniformly to the unique solution to the
         Dirichlet problem for the classical local Laplacian 
          \begin{equation} \label{Dirich-local-Lapla} 
        \left\{
             \begin{array}{ll}
                \displaystyle \Delta u(x)  = 0 \qquad & x \in \Omega, \\[6pt]
                         u (x) = g (x) \qquad & x \in \partial \Omega.
             \end{array}
             \right.
         \end{equation}

         Here we will use the explicit constant 
         \begin{equation} \label{cte}
         c(s) = 
         \frac{2^{2s}s\Gamma  (s+1/2)}{\pi^{1/2} \Gamma  (1-s)}
         \end{equation}
          that 
         appears front of the integral in \eqref{fract-eigen}. 
         We just remark that any $c(s)$ such that $c(s) \sim (1-s)$ will also work
         when taking this limit, but the
         explicit form of \eqref{cte} is the one that corresponds to the 1-dimensional fractional Laplacian.

          \begin{theorem} \label{teo.limite-s-a-1.intro}
         Let $\Omega$ be a $C^2$ bounded domain and fix $g\in C (\mathbb{R}^N \setminus \Omega)$ and bounded. Let $u_s$ denote the unique solution to
         to the problem for
         the trace fractional Laplacian, \eqref{Main-eq-intro}.
         Then, it holds that
         $$
         \lim_{s \nearrow 1} u_s = u
         $$
         in $C (\overline{\Omega})$. The limit $u$ is given by the unique solution to
         \eqref{Dirich-local-Lapla}.
         
         Moreover, when $u_s$ is the unique solution to the problem for
         the mid-range fractional Laplacian, \eqref{Main-eq-section.iiii},
         it holds that
         $$
         \lim_{s \nearrow 1} u_s = u
         $$
         in $C (\overline{\Omega})$ where the limit $u$ is given by the unique solution to
         the local problem
          \begin{equation} \label{Dirich-local-Lapla.N=2} 
        \left\{
             \begin{array}{ll}
                \displaystyle \lambda_1 (D^2 u)(x) + \lambda_N (D^2 u) (x) = 0 \qquad & x \in \Omega, \\[6pt]
                         u (x) = g (x) \qquad & x \in \partial \Omega.
             \end{array}
             \right.
         \end{equation}
         \end{theorem}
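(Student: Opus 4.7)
The plan is to apply the method of half-relaxed limits from the theory of viscosity solutions, combined with a careful analysis of the pointwise convergence of the fractional eigenvalues $\Lambda_i^s\phi(x)$ to the classical eigenvalues $\lambda_i(D^2\phi(x))$ on smooth test functions.

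First, I would establish a uniform $L^\infty$ bound on the family $\{u_s\}_{s\in(1/2,1)}$: since $\|g\|_{L^\infty}$ and $-\|g\|_{L^\infty}$ are respectively a viscosity supersolution and subsolution of both \eqref{Main-eq-intro} and \eqref{Main-eq-section.iiii} with the appropriate exterior data, Theorem \ref{teo.compar.intro} gives $\|u_s\|_{L^\infty(\mathbb{R}^N)}\le\|g\|_{L^\infty}$ uniformly in $s$. Next, exploiting that $\Omega$ is $C^2$, I would build uniform barriers at $\partial\Omega$: fix $x_0\in\partial\Omega$ and use a suitable smooth function (e.g.\ a tilted/translated paraboloid combined with a truncation to match $g$ far away) to control the modulus of continuity of $u_s$ at $x_0$ from above and below, uniformly in $s\in(s_0,1)$. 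This uses Theorem \ref{teo.compar.intro} again and the fact that for a quadratic test function $q$ we control $\Lambda_i^s q$ uniformly in $s$ near $1$. With this, define the half-relaxed limits
\begin{equation*}
\overline u(x)=\limsup_{\substack{s\nearrow 1\\ y\to x}} u_s(y),\qquad
\underline u(x)=\liminf_{\substack{s\nearrow 1\\ y\to x}} u_s(y),
\end{equation*}
which are respectively upper and lower semicontinuous on $\overline\Omega$ and which agree with $g$ on $\partial\Omega$ by the uniform boundary barriers.

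The heart of the argument is a convergence lemma: for any $\phi\in C^2(\mathbb{R}^N)$ bounded on $\mathbb{R}^N$ and for any $x_0\in\Omega$,
\begin{equation*}
\lim_{s\nearrow 1}\,c(s)\int_{\mathbb{R}}\frac{\phi(x_0+tz)-\phi(x_0)}{|t|^{1+2s}}\,dt
=\frac{\partial^2\phi}{\partial z^2}(x_0),
\end{equation*}
uniformly in $z\in\mathbb{S}^{N-1}$, with $c(s)$ as in \eqref{cte}. This is the classical limit for the $1$-dimensional fractional Laplacian, and the specific normalization of $c(s)$ is precisely what makes the constant equal to $1$; one proves it by splitting the integral into $\{|t|<r\}$ (where Taylor expansion of $\phi$ up to second order yields the desired value after the standard computation of the normalization $c(s)$) and $\{|t|\ge r\}$ (which vanishes because $c(s)\sim (1-s)\to 0$ while the remaining integral is $O(1)$). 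Uniformity in $z$ follows from the $C^2$ regularity of $\phi$. Since the max-min over compact sets in \eqref{fract-eigen} is stable under uniform convergence, this yields $\Lambda_i^s\phi(x_0)\to\lambda_i(D^2\phi(x_0))$ and, in particular,
\begin{equation*}
\sum_{i=1}^N \Lambda_i^s\phi(x_0)\to\Delta\phi(x_0),\qquad
\Lambda_1^s\phi(x_0)+\Lambda_N^s\phi(x_0)\to\lambda_1(D^2\phi)(x_0)+\lambda_N(D^2\phi)(x_0).
\end{equation*}

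With the lemma in hand, I would run the standard half-relaxed limits argument. If $\phi$ is a smooth test function touching $\overline u$ from above at an interior point $x_0$, perturb and pass to a sequence $s_k\nearrow 1$, $x_k\to x_0$ with $u_{s_k}-\phi_k$ having a global max at $x_k$ (modifying $\phi$ outside a neighborhood so that the tails match $u_{s_k}$, as is standard in viscosity theory for nonlocal operators). Testing the equation for $u_{s_k}$ against this global test function and using the convergence lemma yields $-\Delta\phi(x_0)\le 0$ (resp. $-\lambda_1(D^2\phi)(x_0)-\lambda_N(D^2\phi)(x_0)\le 0$). Hence $\overline u$ is a viscosity subsolution of \eqref{Dirich-local-Lapla} (resp.\ \eqref{Dirich-local-Lapla.N=2}), and symmetrically $\underline u$ is a supersolution. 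The classical comparison principle for $\Delta u=0$ (resp.\ for the degenerate elliptic operator $\lambda_1(D^2u)+\lambda_N(D^2u)$, which is proper and elliptic in the viscosity sense) then gives $\overline u\le\underline u$ on $\overline\Omega$, forcing $\overline u=\underline u=u$. Equality of the semicontinuous envelopes is equivalent to uniform convergence $u_s\to u$ in $C(\overline\Omega)$, which is the desired conclusion.

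The main obstacle will be the construction of uniform boundary barriers valid for all $s$ in a left neighborhood of $1$, so that the limit $\overline u$ and $\underline u$ genuinely match the datum $g$ on $\partial\Omega$; everything else is a fairly routine application of the half-relaxed limits technique, provided one has the pointwise convergence $\Lambda_i^s\to\lambda_i$ together with uniformity in $z\in\mathbb{S}^{N-1}$, which in turn relies crucially on the precise form \eqref{cte} of the normalizing constant.
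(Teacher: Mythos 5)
Your proposal is correct and follows essentially the same route as the paper: uniform bounds and boundary control via comparison/barriers, half-relaxed upper and lower limits, the key fact that $c(s)\sim(1-s)$ makes the normalized one-dimensional fractional operator converge to $\partial^2_{zz}\phi$ while the nonlocal tail (bounded by $\|u_s\|_\infty$, $\|g\|_\infty$) is killed by $c(s)\to 0$, and then comparison for the limit local problem plus coincidence of the half-relaxed limits to get uniform convergence. The only cosmetic difference is that you pass the $\min$/$\max$ (and max--min over subspaces) to the limit via uniform-in-$z$ convergence on the compact sphere, whereas the paper extracts near-optimal directions $z_k\to\hat z$ and identifies the limit direction through an inequality argument; both mechanisms are equivalent here.
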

         
         Remark that the first limit result also holds for the usual fractional Laplacian
         \eqref{ec-Frac-Lapla-1} with the appropriate $N$ dimensional constant $c(s)$.

         In these limits, as $s\nearrow 1$, the limit is unique and is characterized as
         the solution to \eqref{Dirich-local-Lapla} or to \eqref{Dirich-local-Lapla.N=2}. Hence, we have convergence of the whole family $u_s$ as
         $s \nearrow 1$ (not only along subsequences).
         
               Concerning regularity of solutions we quote the recent paper \cite{HolderRef} where the authors prove a
         Hölder regularity result for solutions to $\Lambda_1^s u (x)+ \Lambda_N^s u (x)  =f(x)$ in $\Omega$ with homogeneous Dirichlet boundary conditions, 
 $u (x) = 0$ in $\mathbb{R}^N\setminus \Omega$ and $s$ close to 1.
 Regularity issues for these operators is a delicate issue since they are very degenerate.

        Now, to end the introduction, let us comment briefly on
        the ideas and methods used in the proofs.  
         
         In most of our arguments, to simplify the notation and clarify
         the ideas used in the proofs we will only include the details for the mid-range fractional Laplacian, 
$ (-\Delta)^s_{mid} (u) (x) $, that involve the smallest and the largest fractional eigenvalues.
         Since for this operator the main difficulties arise, we will just briefly comment
         on how to extend the results for the fractional trace Laplacian, $ (-\Delta)^s_{tr} (u) (x) $, in which
         the intermediate eigenvalues appear. 
         Recall again that for $N=2$ the mid-range fractional Laplacian and the trace fractional
Laplacian coincide.
         In addition, we will drop the notation $p.v.$ in front of the
         integrals (that need to be understood in the principal value sense when
         appropriate) and, when we prove results for
         a fixed $s$, we will also drop the constant $c(s)$.
            
            Since our problem is fully nonlinear we use the concept 
         of viscosity solutions in a nonlocal framework.
         We will use ideas from \cite{1,BarImb,BarChasImb}.
         First, we prove a comparison principle for sub and supersolutions to
         our problems 
          \eqref{Main-eq-intro} and \eqref{Main-eq-section.iiii}. The proof follows ideas from \cite{BarChasImb}
          and \cite{dPQR}. Notice that in \cite{dPQR} it is assumed that the domain is
          strictly convex. This condition is not needed here since  
          the maximum and the minimum among directions are
          involved in our operator and we can choose any direction 
          either in the max or in the min
          to obtain a super or a subsolution. Once we have a comparison principle
          existence and uniqueness of solutions are an easy consequence of
          Perron's method. 
          
          To show that the operator is nonlinear we find 
          a domain in two dimensions, a smooth function $u$ inside $\Omega$ and
          an exterior datum $g$ such that 
          the lines corresponding to directions associated with the maximum and the infimum
          of the $1-$dimensional fractional derivatives of $u$ inside $\Omega$
          do not intersect a region outside $\Omega$. Then, we show that the exterior datum $g$ 
          can be slightly perturbed in that region keeping the same $u$ as the solution inside $\Omega$. In this way we find two different exterior data (that, in addition, are ordered) with the same solution inside the domain and we conclude the nonlinearity of our problem from the strong maximum principle.

          To recover the usual Laplacian in the limit as $s\nearrow 1$
          we just have to observe that with the choice of the constant $c(s)$ given in
          \eqref{cte} it holds that the 1-dimensional fractional Laplacian converges
          to the usual second derivative as $s\nearrow 1$ (for $c(s) \sim (1-s)$
          we just obtain a multiple of the Laplacian in the limit). Then, the proof 
          follows just taking care of the $\max$ and $\min$ involved in the
          fractional eigenvalues using viscosity tricks. The uniform convergence
         in Theorem \ref{teo.limite-s-a-1.intro}
 will follow from the fact that we show that the upper  and lower half-relaxed 
         limits of $\{u_s\}_s$ coincide.

          Finally, let us point out again that to compute the 1-dimensional fractional Laplacian
          in directions $z$ on some particular test functions we need to restrict ourselves
          to consider only the case $s>1/2$.
         We believe that without this condition solutions may noy be continuous 
         up to the boundary of the domain.

         The paper is organized as follows: In Section \ref{sect-compar}
         we prove the comparison principle for viscosity sub and supersolutions to our
         problems and then we obtain existence and uniqueness for the Dirichlet
         problems.
         In Section \ref{sect-nonlinear} we show that the operators
         are nonlinear. 
         In Section \ref{sect-lim-s} we deal with the limit as $s \nearrow 1$.
         Finally, in Section \ref{sect-extension}
       we will comment on possible extensions of our results and describe how to
       introduce coefficients in fractional trace operators.

     \section{Comparison principle. Existence and uniqueness of solutions} \label{sect-compar}

         The main result in this section is to prove a comparison principle for the problem 
 \begin{equation} \label{Main-eq-section.44} 
        \left\{
             \begin{array}{ll}
                \displaystyle \sum_{i=1}^N \Lambda_i^s u (x) =0, \qquad & x \in \Omega, \\[6pt]
                         u (x) = g (x), \qquad & x \in \mathbb{R}^N\setminus \Omega.
             \end{array}
             \right.
         \end{equation}         
         To this end, we borrow ideas from \cite{BarChasImb} (see also \cite{dPQR}).
         
         Since in this section $s$ is fixed we drop the constant $c(s)$ that plays no role
         in our arguments. 
         Also, as mentioned in the introduction, to simplify the notation in the proofs we will only analyze in detail
         the problem
          \begin{equation} \label{Main-eq-section} 
        \left\{
             \begin{array}{ll}
                \displaystyle \Lambda_1^s u (x)+ \Lambda_N^s u (x)  =0, \qquad & x \in \Omega, \\[6pt]
                         u (x) = g (x), \qquad & x \in \mathbb{R}^N\setminus \Omega,
             \end{array}
             \right.
         \end{equation}   
     and at the end of the section comment on how to obtain the results for
     \eqref{Main-eq-section.44}. In fact, we can consider any combination with
     nonnegative
     coefficients of fractional
     eigenvalues as long as $\Lambda_1^s u$ and $\Lambda_N^s u$ appear.

     \subsection{Basic notations and definition of solution.} 
         We use the notion of viscosity solution from \cite{BarChasImb}, which is the nonlocal extension of 
         the classical theory, see \cite{CIL}.

         To state the precise notion of solution, we need the following: 
         Given $g\colon\R^N\setminus\Omega \to \R,$  for a function $u \colon \overline{\Omega} \to \R$,
         we define the upper $g$-extension of $u$ as
         \begin{equation*}
             u^g(x) \coloneqq 
                 \left \{ 
                     \begin{array}{ll} 
                         u(x) \quad &  \ x \in \Omega, \\[6pt] 
                         g(x) \quad &  \ x \in \R^N\setminus\overline{\Omega},\\[6pt]
                         \max \{ u(x), g(x) \} \quad &  \ x \in \partial \Omega.
                     \end{array} 
                 \right . 
         \end{equation*}
         In the analogous way we define $u_g$, the lower $g$-extension of $u$, replacing $\max$ by $\min$.

          Now we introduce the definition of the upper and lower semicontinuous envelope,
          that we will denote by $\tilde{u}$ and $\undertilde{u}$ respectively of $u$, 
          that are given by
         \[
                 \tilde{u}(x)\coloneqq\inf_{r>0}\sup \Big\{u(y)\colon y\in B(y,r) \Big\}
                 \]
                 and
                 \[
                 \undertilde{u}(x)=\sup_{r>0}\inf \Big\{u(y)\colon y\in B(y,r)\Big\}.
         \]  
     
         An important fact, that can be easily 
         verified, is that for 
         any continuous function $g \colon \R^N\setminus\Omega \to \R$ and
         any upper semicontinuous function $u \colon \overline{\Omega} \to \R,$  it holds that
         \[
             u^g = \tilde w,\quad \mbox{ with} \quad w=u \mathbf{1}_{\overline{\Omega}} + g \mathbf{1}_{\R^N\setminus\overline{\Omega}}
            \text{ in }\R^N.
         \] 
         Here $\mathbf{1}_{A}$
         denotes the indicator function of a set $A$ in $\mathbb{R}^N.$

         We now introduce a useful notation, for $\delta > 0$ we write
             \[	
             E_{z, \delta}(u, \phi, x)\coloneqq I^1_{z, \delta}(\phi, x)+I^2_{z, \delta}(u, x)
         \] 
         with
         \[
             \begin{array}{l}
                 \displaystyle 
                     I^1_{z, \delta}(\phi, x)
                     \coloneqq\int_{-\delta}^\delta \frac{\phi (x+t z)-\phi(x)}{|t|^{1+2s}}dt,\\[15pt]
                 \displaystyle  
                     I^2_{z, \delta}(u, x)\coloneqq\int_{\RR \setminus (-\delta,\delta)} 
                         \frac{u^g( x+t z)-u( x)}{|t|^{1+2s}}dt,
             \end{array}
         \]
         and then define 
         \begin{equation*}
         \begin{array}{l}
         \displaystyle
             E_\delta(u, \phi, x) \coloneqq 
             - \left[ \inf_{ |z| =1} 
                 E_{z, \delta}(u, \phi, x)
                + 
                   \sup_{ |z| =1} 
                 E_{z, \delta}(u, \phi, x)
                  \right].
                 \end{array}
         \end{equation*}

         Now, with these notations at hand, we can introduce our notion of viscosity solution
         to \eqref{Main-eq-section} testing with $N-$dimensional functions as usual. 
     
         \begin{definition}\label{defsol.44}
             A bounded upper semicontinuous function $u \colon \R^N \to \R$  
             is a viscosity subsolution to the Dirichlet problem \eqref{Main-eq-section} if
             $u \leq g$ in $\R^N\setminus\overline{\Omega}$ and if 
             for each $\delta > 0$ and $\phi \in C^2(\R^N)$ such that $x_0$ is a maximum 
             point of $u - \phi$ in $B_\delta(x_0)$, then
             \begin{equation*}
                 \begin{array}{ll}
                     \displaystyle
                         E_\delta(u^g, \phi, x_0) \leq 0 & \quad \mbox{if} \ x_0 \in \Omega, \\[6pt]
                     \displaystyle 
                         \min \left\{ E_\delta(u^g, \phi, x_0), u(x_0) - g(x_0) \right\} \leq  0 
                         & \quad \mbox{if} \ x_0 \in \partial \Omega.
                 \end{array}
             \end{equation*}
     
             In an analogous way, we define viscosity supersolutions (reversing the inequalities
             and replacing $u^g$ by $u_g$)
             and viscosity solutions (asking that $u$ is both a supersolution and a subsolution) 
             to \eqref{Main-eq-section}.
         \end{definition}
         
         \begin{remark}
         For the definition of a viscosity solution to
         the Dirichlet problem for the trace fractional Laplacian, \eqref{Main-eq-section.44},
         we just have to take
       $$
         \begin{array}{l}
         \displaystyle
             E_\delta(u, \phi, x) \coloneqq 
             - \left[ \sum_{i=1}^N \sup_{{dim} (S) = N-i+1}
       \inf_{ \substack{z \in S, \\ |z| =1}}  
                 E_{z, \delta}(u, \phi, x)
                  \right]
                 \end{array}
       $$
       in the previous definition. 
         \end{remark}
     
     \subsection{Comparison principle.}
            Now, our goal is to prove a comparison principle
            between sub and supersolutions to \eqref{Main-eq-section}.
          
     In order to prove the comparison principle, we first show that sub and supersolutions
     behave well on the boundary of the domain.

         \begin{theorem}\label{condicion-borde.77} 
            Assume that  $g\in C(\R^N\setminus\Omega)$ is bounded
            and that $\Omega$ is a bounded $C^2-$domain. 
            Let $u,v\colon \mathbb{R}^N\to\mathbb{R}$ 			
            be  viscosity sub and supersolution of \eqref{Main-eq-section} in $\Omega$, 
            in the sense of Definition \ref{defsol.44}, respectively. Then,
            \begin{enumerate}[(i)]
                \item $u\le g$ on $\partial \Omega$; 
                
                \medskip
                
                \item $v\ge g$ on $\partial \Omega$.
            \end{enumerate}
        \end{theorem}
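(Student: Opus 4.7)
I will prove (i); statement (ii) follows by applying (i) to the subsolution $-v$ with exterior datum $-g$ (since $\Lambda_1^s(-w) = -\Lambda_N^s(w)$).

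Suppose by contradiction that $u(x_0) > g(x_0)$ at some $x_0 \in \partial \Omega$, and set $\eta := u(x_0) - g(x_0) > 0$. Using the upper semicontinuity of $u$, the continuity of $g$, and the $C^2$-regularity of $\partial \Omega$, first choose $r > 0$ so small that $u(y) \leq u(x_0) + \eta/8$ on $B_r(x_0)$, $g(y) \leq u(x_0) - 7\eta/8$ on $B_r(x_0) \setminus \Omega$, and an outward unit normal $\nu$ at $x_0$ satisfies $\{x_0 + t\nu : t \in (0, r)\} \subset \R^N \setminus \overline{\Omega}$.

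Since $u$ is only USC, it cannot in general be touched from above at $x_0$ by a $C^2$ function, so I will argue at a nearby point produced by penalization. For $A \geq 1$ and a suitable penalty $\psi_A$ centered at $x_0$ (e.g.\ $\psi_A(x) = A|x-x_0|^{2K}$, with the order $K$ chosen below), let $x_A$ maximize $u - \psi_A$ on $\overline{B_{r/2}(x_0)}$. Standard comparisons give $A\,|x_A-x_0|^{2K} \leq u(x_A) - u(x_0) \leq \eta/8$, so $x_A \to x_0$ and $u(x_A) \to u(x_0)$ as $A \to \infty$; the inequality $u(x_A) > g(x_A)$ for $A$ large, together with the exterior subsolution condition, rules out $x_A \in \R^N \setminus \overline\Omega$, so $x_A \in \overline\Omega$. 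The function $\phi_A := \psi_A + [u(x_A) - \psi_A(x_A)] \in C^2(\R^N)$ is then a valid test with $x_A$ a maximum of $u - \phi_A$, and the subsolution condition combined with $u(x_A) - g(x_A) > 0$ forces $E_\delta(u^g, \phi_A, x_A) \leq 0$ for some $\delta>0$.

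The contradiction will come from a directional asymmetry of the tail. For the outward direction $z = \nu$, once $\delta$ exceeds the normal distance $d_\nu(x_A) = O(|x_A - x_0|)$, the integrand of $I^2_{\nu, \delta}$ is dominated by $-7\eta/(8|t|^{1+2s})$ on the exterior range $t \in (\delta, r/2)$ and by $\eta/(8|t|^{1+2s})$ on the symmetric interior range, yielding
\[
I^2_{\nu, \delta}(u^g, x_A) \leq -\tfrac{3\eta}{8s}\, \delta^{-2s} + C_r.
\]
In every other direction the universal bound $u^g(y) - u(x_A) \leq \eta/8$ on $B_r(x_0)$ gives
\[
\sup_{|z|=1} I^2_{z, \delta}(u^g, x_A) \leq \tfrac{\eta}{8s}\, \delta^{-2s} + C_r,
\]
while a second-order Taylor expansion (the odd linear term vanishing) controls the local part via $|I^1_{z, \delta}(\phi_A, x_A)| \leq C\, \|D^2\phi_A\|_{L^\infty(B_\delta(x_A))}\, \delta^{2-2s}$. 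The assumption $s>1/2$ makes $\delta^{2-2s} = o(\delta^{-2s})$, so with a careful tuning of $K$, $A$ and $\delta$ (the point of using a higher-order or anisotropic penalty) one gets $\inf_z E_{z,\delta} + \sup_z E_{z,\delta} < 0$, whence $E_\delta(u^g, \phi_A, x_A) > 0$, contradicting the subsolution condition.

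The main technical obstacle is the coupling between the penalty strength $A$ needed to localize $x_A$ close to $x_0$ and the Hessian bound on $\phi_A$ that enters $I^1$: a naive quadratic penalty yields a borderline scaling in which the geometric constraint $\delta \gtrsim |x_A-x_0|$ and the analytic constraint $A\delta^2 \ll 1$ are in tension, so a higher-order or anisotropic penalty is needed to decouple them. The condition $s>1/2$ is used decisively at the end to absorb the smooth local correction $I^1$ into the dominant negative nonlocal tail $I^2_{\nu,\delta}$. For the trace fractional Laplacian the argument is identical: the smallest fractional eigenvalue $\Lambda_1^s$ provides the dominant negative contribution through the outward direction $\nu$, and the remaining $\Lambda_i^s$ satisfy the same universal upper bound.
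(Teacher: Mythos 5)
Your argument is correct in substance and reaches the same kind of contradiction as the paper, but by a noticeably different route. The paper penalizes with $\omega_\varepsilon(y)=a(y-x_0)/\varepsilon+b(\dist(y)/\varepsilon)$, whose boundary-layer term in the signed distance forces the maximum point $x_\varepsilon$ to satisfy $\dist(x_\varepsilon)/\varepsilon\to 0$; the infimum is then tested along the direction to the nearest boundary point at the scale $\delta_\varepsilon=\dist(x_\varepsilon)$, while the supremum is controlled after enlarging the test radius from $\delta_\varepsilon$ to $\varepsilon$, so the contradiction comes from a separation of scales, $\delta_\varepsilon^{-2s}\gg\varepsilon^{-2s}$. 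You instead use only a centered penalty $A|x-x_0|^{2K}$, test the infimum along the fixed outward normal $\nu$ at $x_0$, and control the supremum at the \emph{same} scale $\delta^{-2s}$ by the elementary observation that $u(x_A)\ge u(x_0)$ (from the penalization) while $u^g\le u(x_0)+\eta/8$ on $B_r(x_0)$, so every tail numerator is at most $\eta/8$ whereas the outward-normal tail numerator is at most $-7\eta/8$; the contradiction is a comparison of constants rather than of scales. This is a legitimate and arguably more elementary mechanism; what the paper's two-scale scheme buys is insensitivity to the number of terms in the operator and to the choice of thresholds, which matters for the extension to the trace operator (see below).

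Three smaller points. First, the tension you fear between $\delta\gtrsim|x_A-x_0|$ and $A\delta^2\ll\eta$ is not actually there: already for the quadratic penalty one has $A|x_A-x_0|^{2}\le u(x_A)-u(x_0)\to 0$ (since $|x_A-x_0|^2\le \eta/(8A)$ forces $x_A\to x_0$, and then upper semicontinuity gives $u(x_A)\to u(x_0)$), so choosing $\delta$ comparable to $|x_A-x_0|$ (or arbitrarily small when $x_A=x_0$) makes $A\delta^2\to 0$; no higher-order or anisotropic penalty is needed, and the tuning you leave implicit is routine rather than delicate. Second, $\delta^{2-2s}=o(\delta^{-2s})$ as $\delta\to 0$ for every $s\in(0,1)$, so $s>1/2$ is not what makes the local term absorbable here; the restriction on $s$ plays no decisive role in this particular theorem. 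Third, your closing remark on the trace fractional Laplacian needs a quantitative adjustment when $N\ge 4$: with thresholds $\eta/8$ and $7\eta/8$, the $N-1$ universal bounds of size $+\eta/(8s)\,\delta^{-2s}$ can overwhelm the single negative contribution $-3\eta/(8s)\,\delta^{-2s}$, so one must take the semicontinuity and continuity thresholds of order $\eta/N$ (or control the remaining eigenvalue terms at a larger scale, as the paper does).
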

    
        \begin{proof}		
            We begin by proving {\it (i)}. Suppose by contradiction that 
            there is $x_0\in\partial\Omega$ such that 
            \[
                \mu \colon=u(x_0)-g(x_0)>0.
            \] 
            Hence, we have that $u^g(x_0)=u(x_0)$.
            Since $g$ is continuous, there exists $R_0 > 0$ such that 
            \begin{equation}\label{eq:attainablility1}
                |g(x_0)-g(y)| \leq \frac{\mu}4
                \qquad \forall y\in B(x_0,2R_0)\cap (\RR^N\setminus\Omega).
            \end{equation}
             We may with  no loss of
             generality assume that $R_0<\max\{\|x-y\|\colon x,y\in\overline{\Omega}\}.$
             
            We now introduce two auxiliary functions:
            \begin{itemize}
                \item $a\colon\RR^N\to \RR$, a smooth bounded function such that 
                    $a(0)=0,$ $a(y)>0$ if $y\neq0,$ 
                    \[
                        \liminf\limits_{|y|\to\infty }a(y)>0
                    \] 
                    and $D^2 a$ is bounded; for instance we can just take
                    $$
                    a(y) = |y|^2.
                    $$
                \item $b\colon\RR\to \RR$, a smooth bounded and increasing function 
                    which is concave in $(0,+\infty),$ and \\
                    such that $b(0)=0,$ $$b(t)>-\frac{\mu}4$$ in $\RR,$  
                    $b^{\prime}(0)=k_1$ and $b^{\prime\prime}(0)=-k_2$ for two
                    positive constants $k_1>0$, $k_2>0.$ 
            \end{itemize}

            Next, we use these two functions to define for any $\varepsilon>0$ the penalized test function
            \[
                \omega_\varepsilon (y)\coloneqq\frac{a(y-x_0)}{\varepsilon}+
                b\left(\frac{\dist(y)}{\varepsilon}\right),
            \]
            here $\dist$ is a smooth extension of the signed distance to the boundary, $\partial \Omega$. It
            is at this point where we use the $C^2$ regularity of $\partial \Omega$ to ensure
            that $\dist$ is $C^2$. 
            
            Now, we introduce
            \[
                \Psi_\varepsilon(y)= u^g(y)-\omega_\varepsilon(y)
            \]
            that is upper semicontinuous for any $\varepsilon$ small. 
            Then, for any  $\varepsilon$ small, $\Psi_\varepsilon$ attains a global maximum at a point
            $x_\varepsilon.$
            Therefore, we have 
            \begin{equation}\label{eq:attainablility2}
                u^g(x_\varepsilon)-\omega_\varepsilon(x_\varepsilon)\geq 
                u^g(x_0)-\omega_\varepsilon(x_0)=u^g(x_0)
            \end{equation}
            and hence,
            \[
                 \frac{a(x_\varepsilon-x_0)}{\varepsilon}\leq 
                     u^g(x_\varepsilon)-u(x_0)-b\left(\frac{\dist(x_\varepsilon)}{\varepsilon}\right)
                 \leq
                 u^g(x_\varepsilon)-u(x_0)+\|b\|_\infty.
            \]
            From here, we get that 
            \begin{equation}\label{eq:attainablility3}
                x_\varepsilon\to x_0 \qquad \text{ as } \varepsilon \to 0,
            \end{equation}
            In particular, $x_\varepsilon\in B(x_0,2R_0)$ for any $\varepsilon$ small enough. 
            Now, using again the properties of $a$ and $b$ we get
            \[
                g(x_0)+\mu=u(x_0)\leq u^g(x_{\varepsilon})- 
                \omega(x_{\varepsilon})
                \leq u^g(x_{\varepsilon})+\frac{\mu}{4}.
            \]
            Therefore $x_\varepsilon\in\overline{\Omega}.$ 
              Notice that we used the function $a$ to penalize that
            $x_\varepsilon$ is far from $x_0$ and the function $b$ with
            the signed distance to $\partial \Omega$
            to obtain that $x_\varepsilon$ is not in $\mathbb{R}^N \setminus \overline{\Omega}$.
        
            Since $a$ is non-negative, by \eqref{eq:attainablility2}, we have
            \[
                b\left(\frac{\dist(x_{\varepsilon} )}{\varepsilon}\right)
                \le u^g(x_{\varepsilon})-u(x_{0}).
            \]
            Hence, due to the fact that $u$ is upper semicontinuous, we obtain
            \[
                b\left(\frac{\dist(x_{\varepsilon} )}{\varepsilon}\right)
                \to 0 \qquad \mbox{ as } \varepsilon\to 0.
            \]
            Thus, we have that
            \begin{equation}\label{eq:attainablility4}
                \frac{a\left(x-x_0\right)}{\varepsilon}
                \to 0,\quad
                \frac{\dist(x_{\varepsilon} )}{\varepsilon}
                \to 0 \quad \mbox{ and } \quad u^g(x_{\varepsilon})\to u(x_0)
            \end{equation}
            as $\varepsilon\to0.$

            Since $u(x_\varepsilon)\to u(x_0)=g(x_0)+\mu$ and $g$ is continuous,
            if $x_\varepsilon\in\partial\Omega$ then $u(x_\varepsilon)>g(x_\varepsilon)$ 
            for any $\varepsilon$ small enough. Now, using that $u$ is a viscosity subsolution  
            of \eqref{Main-eq-section} 
in $\Omega$ in the sense of Definition \ref{defsol.44}, we have that
            \begin{equation}\label{eq:attainablility5}
                E_\delta(u^g, \omega_\varepsilon, x_\varepsilon)\leq 0.
            \end{equation}
            
            \textit{Case 1:} $x_\varepsilon\in \Omega.$ 
            
            Here we need to introduce changes
            in the arguments used in \cite{BdpQR}. The key point 
            is that the directions $z$ that are associated to the $\max$ and $\min$
            that appear in the equation
                      \begin{equation} \label{Main-eq-section.888} 
                      \begin{array}{l}
                      \displaystyle 
         \Lambda_1^s u (x)+ \Lambda_N^s u (x) \\[10pt]
         \displaystyle \quad = 
       \min_{|z| =1}  \left\{ \int_{\mathbb{R}} 
                             \frac{u(x+tz)-u(x)}{|t|^{1+2s}} 
                             \, dt
                         \right\} +
                         \max_{|z| =1}  \left\{ \int_{\mathbb{R}} 
                             \frac{u(x+tz)-u(x)}{|t|^{1+2s}} 
                             \, dt
                         \right\} =0,  
                         \end{array}       
         \end{equation}   
         may be different (and we will take advantage of this fact).
         
         We have 
          \begin{equation} \label{sum}
         \begin{array}{l}
         \displaystyle
             E_\delta(u, \phi, x_\varepsilon) \coloneqq 
             - \left[ \inf_{|z| =1}
                 E_{z, \delta}(u, \omega_\varepsilon, x_\varepsilon)                
                + 
                   \sup_{|z| =1}
                 E_{z, \delta}(u, \omega_\varepsilon, x_\varepsilon)
                 \right] \leq 0.
                 \end{array}
         \end{equation}

            To bound the infimum we choose 
            $$z_\varepsilon = \frac{\bar{x}_\varepsilon-x_\varepsilon}{|\bar{x}_\varepsilon-x_\varepsilon|},$$ 
            with $\bar{x}_\varepsilon$ the point on $\partial \Omega$ such that $\delta_\varepsilon = d(x_\varepsilon) = |\bar{x}_\varepsilon - x_\varepsilon|$. We can assume that $\delta_\varepsilon < \varepsilon$, since 
            we have 
            \begin{equation}
            \label{borraron}
            \frac{\delta_\varepsilon}{\varepsilon} \rightarrow 0.
            \end{equation}
            With this choice,             
            \[
            \begin{array}{l}
            \displaystyle 
            \inf_{|z|=1} E_{z, \delta_\varepsilon}(u, \omega_\varepsilon, x_\varepsilon)  \\[10pt]
            \displaystyle  \qquad \leq \frac{C}{\delta_{\varepsilon}^{2 s}}\left[\left(\frac{\delta_{\varepsilon}}{\varepsilon}\right)^2+(1+K \mu)\left(\frac{\delta_{\varepsilon}}{\varepsilon}\right)^{2 s}+\kappa(\varepsilon, s) \delta_{\varepsilon}^{2 s}-K \mu\right] 
            \\[10pt]
            \displaystyle  \qquad
            \sim -\frac{C}{\delta_{\varepsilon}^{2 s}} K\mu = -\frac{C_1}{\delta_{\varepsilon}^{2 s}}.
            \end{array}
            \]

            Now, for the supremum we argue as follows: the main idea is that
            what we obtained with our choice in the infimum is negative
            enough to absorb the supremum and reach a contradiction at the end
            (the contradiction arrives from the fact that the sum 
            of the infimum and the supremum is greater 
            or equal to zero, see \eqref{sum},  
            but the infimum is negative enough in order to obtain that the sum
            is also negative).   
            
            Given $\eta_\varepsilon>0$, we choose a
         direction, $\widetilde{z}_\varepsilon$, such that 
            \[E_{z_\varepsilon, \delta_\varepsilon}(u, \omega_\varepsilon, x_\varepsilon) + \eta_\varepsilon \geq \sup_{|z|=1} E_{z, \delta_\varepsilon}(u, \omega_\varepsilon, x_\varepsilon). \]
        Then, 
            \[
            \begin{array}{l}
            \displaystyle
            -\eta_\varepsilon + \sup_{|z|=1} E_{z, \delta_\varepsilon}(u, \omega_\varepsilon, x_\varepsilon) 
             \\[10pt]
            \qquad \displaystyle 
           \leq E_{z_\varepsilon, \delta_\varepsilon}(u, \omega_\varepsilon, x_\varepsilon) 
            \\[10pt]
            \qquad \displaystyle \leq E_{z_\varepsilon, \varepsilon}(u, \omega_\varepsilon, x_\varepsilon) 
            = I^1_{z, \varepsilon}(\omega_\varepsilon, x_\varepsilon) + I^2_{z, \varepsilon}(u, x_\varepsilon).
            \end{array} \]
    We can prove both integrals are of order $\varepsilon^{-2s}$ in a similar way as the integrals in the infimum: we estimate $I^1_{z, \varepsilon}(\omega_\varepsilon, x_\varepsilon)$ using the Taylor expansion of $\omega_\varepsilon$ and we estimate $I^2_{z, \varepsilon}(u, x_\varepsilon)$ simply by using the boundedness of $u^g$. Thus, 
            \[
            \displaystyle 
            -\eta_\varepsilon + \sup_{|z|=1} E_{z, \delta_\varepsilon}(u, \omega_\varepsilon, x_\varepsilon)  \leq - \frac{C_2}{\varepsilon^{2s}}
            \]
            Adding the bounds for the infimum and the supremum and using \eqref{borraron} we obtain 
            \[-\eta_\varepsilon \lesssim \frac{1}{\delta_\varepsilon^{2s}} \left[-C_1 +  C_2 \left(\frac{\delta_\varepsilon}{\varepsilon}\right)^{2s} \right] \sim -\frac{C_1}{\delta_\varepsilon^{2s}}\]
            and we reach a contradiction taking $\eta_\varepsilon$ small enough.
            
            
            \textit{Case 2:} $x_\varepsilon\in\partial\Omega.$ 
                    
            Since we are dealing with subsolutions, 
            in the associated inequality we can choose any direction
            to obtain a bound for the infimum, but we have to take
            care of the supremum. When one deals with
            supersolutions the situation is exactly the opposite. Notice that in our problem 
            \eqref{Main-eq-section}
            both the infimum and the supremum appear.
            
            Hence, as we are dealing with subsolutions, for the
            supremum, given $\eta_\varepsilon $, we have a direction $\widetilde{z}_\varepsilon$ as before. On the other hand, for the infimum we are free to choose the direction. We choose
             $z_\varepsilon$ that points inwards the domain
             (for example the inner unit normal to $\partial \Omega$ at 
             the point $x_\varepsilon$ will do the job). We also choose
             a distance $\delta_\varepsilon$ such that 
             $\frac{\delta_\varepsilon}{\varepsilon} \rightarrow 0$. 
            
           Now, we use exactly the same computations as in the previous case.
           For the infimum we use $\delta_\varepsilon$ and $z_\varepsilon$ and for the supremum $\varepsilon$ and $\widetilde{z}_\varepsilon$. As in the previous case, we reach a contradiction
           since the infimum is negative enough and we have a control of the supremum 
           in such a way that the sum is still negative. 
            
            In the case of supersolutions 
            the arguments works in an analogous way since we can interchange
            the roles of the infimum and the supremum in the previous 
            computations (notice that for supersolutions we are reversing the inequalities). 
                        \end{proof}

     \begin{remark} To deal with the problem involving 
     the trace fractional Laplacian $\Delta_{tr}^s$, we just observe that, for the
     terms that involve intermediate fractional eigenvalues,
      \begin{equation} \label{fract-eigen.99}
       \Lambda_i^s u (x) = \max_{{dim} (S) = N-i+1}
       \min_{ \substack{z \in S, \\ |z| =1}}  \left\{  \int_{\mathbb{R}} 
                             \frac{u(x+tz)-u(x)}{|t|^{1+2s}} 
                             \, dt
                         \right\},
       \end{equation}
       with $i=2,...,N-1$, we can choose a direction that almost reaches this quantity. 
       In fact, when we deal with subsolutions we use that, since $\Lambda_1^s u$ involves only the 
       infimum, we have freedom to choose the direction. For the terms that involve the  other eigenvalues we have that,
     given $\eta_\varepsilon>0$, we can choose a
         direction, $\widetilde{z}_\varepsilon$, such that 
            \[E_{z_\varepsilon, \delta_\varepsilon}(u, \omega_\varepsilon, x_\varepsilon) + \eta_\varepsilon \geq \sup_{{dim} (S) = N-i+1}
       \inf_{ \substack{z \in S, \\ |z| =1}}  
       E_{z, \delta_\varepsilon}(u, \omega_\varepsilon, x_\varepsilon) \]
       and then, by the same computations that we made before
       (we just have to add a finite number of
        terms involving $ \eta_\varepsilon$) we reach a contradiction.
       
     Notice that, when dealing with supersolutions we use the freedom in the choice
     of the direction in the term that comes from $\Lambda_N^s u$ (this
     is the term that involves a supremum) and bound all the other terms
     (that involve $\inf$/$\sup$). 
       \end{remark}
            
           \begin{remark}
           In \cite{BdpQR} is is used that the domain is strictly convex.
           Here we do not need this condition, since we have both the infimum
           and the supremum among directions in our operator and hence we can choose
           the direction in one of the terms (the one with the infimum or the one with the supremum)
           when dealing with sub and super solutions. 
           \end{remark}

            Now, we are ready to state and prove the comparison principle for
     sub and super solutions to our problem.
     In this proof we again follow ideas from \cite{BdpQR} but we have to 
     introduce a different function $\Psi_\varepsilon$ (see below)
     and the integrals that appear are also split in a different way.
     Again, here we are not assuming that the domain is strictly convex
     (as was needed for the arguments in \cite{BdpQR}).
            
            \begin{theorem}\label{teo:cp}
                Assume that $g\in C(\R^N\setminus\Omega)$ is
                bounded and that $\Omega$ is a bounded $C^2-$domain. 
                Let $u,v\colon \mathbb{R}^N\to\mathbb{R}$ 			
                be a viscosity sub and supersolution to \eqref{Main-eq-section} 
                in $\Omega$, in the sense of Definition \ref{defsol.44}, then $$u\le v$$ in $\mathbb{R}^N$.
            \end{theorem}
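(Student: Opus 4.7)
The plan is to run the standard doubling-of-variables argument from viscosity theory, but carefully tailored to the inf+sup structure of the mid-range operator. Assume, for contradiction, that $M:=\sup_{\mathbb{R}^N}(u-v)>0$. Because $u\leq g\leq v$ on $\mathbb{R}^N\setminus\Omega$ by Theorem \ref{condicion-borde.77} and the definition of sub/supersolution, the supremum is attained at some $\hat x\in\Omega$. For small $\varepsilon>0$ I would introduce the penalized function
\[
\Psi_\varepsilon(x,y)=u^g(x)-v_g(y)-\varepsilon^{-1}|x-y|^2
\]
and take a global maximizer $(x_\varepsilon,y_\varepsilon)$. Standard estimates give $|x_\varepsilon-y_\varepsilon|^2/\varepsilon\to 0$, $(x_\varepsilon,y_\varepsilon)\to(\hat x,\hat x)$, and $M_\varepsilon:=u^g(x_\varepsilon)-v_g(y_\varepsilon)\to M$. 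Using Theorem \ref{condicion-borde.77} one more time, we can insist that $x_\varepsilon,y_\varepsilon\in\Omega$ for $\varepsilon$ small, so the test functions $\phi(x)=v_g(y_\varepsilon)+\varepsilon^{-1}|x-y_\varepsilon|^2$ and $\psi(y)=u^g(x_\varepsilon)-\varepsilon^{-1}|x_\varepsilon-y|^2$ touch $u^g$ from above at $x_\varepsilon$ and $v_g$ from below at $y_\varepsilon$ on some ball of radius $\delta$ contained in $\Omega$.

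Applying Definition \ref{defsol.44} to $u$ at $x_\varepsilon$ and to $v$ at $y_\varepsilon$ yields
\[
\inf_{|z|=1}E_{z,\delta}(u^g,\phi,x_\varepsilon)+\sup_{|z|=1}E_{z,\delta}(u^g,\phi,x_\varepsilon)\geq 0\geq \inf_{|z|=1}E_{z,\delta}(v_g,\psi,y_\varepsilon)+\sup_{|z|=1}E_{z,\delta}(v_g,\psi,y_\varepsilon).
\]
The crucial step (where the nonlinear max-min structure bites) is to pick near-optimal directions $z_v^\ast$ for the infimum of $E_{z,\delta}(v_g,\psi,y_\varepsilon)$ and $z_u^\ast$ for the supremum of $E_{z,\delta}(u^g,\phi,x_\varepsilon)$, up to error $\eta>0$. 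Then
\[
\inf E_z(u^g)-\inf E_z(v_g)\leq E_{z_v^\ast}(u^g,\phi,x_\varepsilon)-E_{z_v^\ast}(v_g,\psi,y_\varepsilon)+\eta,
\]
and analogously for the $\sup$-difference using $z_u^\ast$. Subtracting the two viscosity inequalities reduces the problem to bounding $E_{z}(u^g,\phi,x_\varepsilon)-E_{z}(v_g,\psi,y_\varepsilon)$ for the two specific directions $z\in\{z_v^\ast,z_u^\ast\}$.

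For each such $z$, split as $I^1+I^2$. A direct computation of $I^1_{z,\delta}(\phi,x_\varepsilon)-I^1_{z,\delta}(\psi,y_\varepsilon)$ (the odd linear-in-$t$ contribution vanishes in principal value) gives a quantity of order $\varepsilon^{-1}\delta^{2-2s}$, which tends to zero as $\delta\to 0$ since $s<1$. For $I^2$, using the global maximum property of $\Psi_\varepsilon$ at $(x_\varepsilon,y_\varepsilon)$ along the test pair $(x_\varepsilon+tz,y_\varepsilon+tz)$ gives the pointwise bound $[u^g(x_\varepsilon+tz)-v_g(y_\varepsilon+tz)]-M_\varepsilon\leq 0$, hence $I^2$-difference $\leq 0$. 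To upgrade this to strict negativity, I would exploit that $\Omega$ is bounded: for $|t|$ larger than some $R$ both $x_\varepsilon+tz$ and $y_\varepsilon+tz$ lie outside $\overline{\Omega}$, so $u^g-v_g$ equals $g(x_\varepsilon+tz)-g(y_\varepsilon+tz)$, which is $o(1)$ uniformly in $t$ on bounded regions by continuity of $g$, while a tail argument kills the contribution from $|t|$ extremely large. Consequently, for $\varepsilon$ small,
\[
I^2_{z,\delta}(u^g,x_\varepsilon)-I^2_{z,\delta}(v_g,y_\varepsilon)\leq -\frac{c\,M}{s\,R^{2s}}
\]
for a constant $c>0$ independent of $z$ and of $\varepsilon,\delta$ small.

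Combining these estimates,
\[
0\leq \bigl[E_{z_v^\ast}(u^g)-E_{z_v^\ast}(v_g)\bigr]+\bigl[E_{z_u^\ast}(u^g)-E_{z_u^\ast}(v_g)\bigr]+2\eta\leq -\frac{2c\,M}{s\,R^{2s}}+o_\delta(1)+2\eta,
\]
a contradiction once $\delta$ and $\eta$ are small enough. The main obstacle in the argument is the asymmetric use of the two extremal directions: one cannot naively estimate the $\inf$- and $\sup$-differences with the \emph{same} direction, and one has to pair the $\inf$-difference with a direction nearly minimizing for the \emph{supersolution} $v$ and the $\sup$-difference with one nearly maximizing for the \emph{subsolution} $u$. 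The second delicate point is the strict negativity of $I^2$, which requires boundedness of $\Omega$ (so that every ray eventually exits $\Omega$) together with continuity of $g$; this is precisely why the exterior boundary data are built into the definition of $u^g$ and $v_g$. The extension to \eqref{Main-eq-section.44} is handled as in the preceding remark by selecting near-optimal directions in each of the finitely many intermediate $\max$-$\min$ terms.
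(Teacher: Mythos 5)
Your proposal follows essentially the same route as the paper: doubling of variables with the quadratic penalization, interior localization of the limit point via Theorem \ref{condicion-borde.77}, the asymmetric pairing of near-optimal directions (the infimum-difference estimated with a near-minimizer for the supersolution, the supremum-difference with a near-maximizer for the subsolution), the $O(\delta^{2-2s}/\varepsilon)$ bound for the test-function part, and a strictly negative far-field contribution of size comparable to $M\int_{\{|t|>R'\}}|t|^{-1-2s}\,dt$ coming from the portion of each line lying outside $\Omega$, where $u^g=v_g=g$ and $u(x_\varepsilon)-v(y_\varepsilon)\to M$. All of these are the ingredients of the paper's argument.

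The genuine gap is in the doubling step itself: you maximize $\Psi_\varepsilon$ over all of $\mathbb{R}^N\times\mathbb{R}^N$ and then invoke the global maximum property along every translate $(x_\varepsilon+tz,\,y_\varepsilon+tz)$ to get the pointwise sign of the $I^2$-difference. Since $u^g$ and $-v_g$ are only bounded and upper semicontinuous, a maximizer over the unbounded product space need not exist; and even granting existence, the claimed convergence $(x_\varepsilon,y_\varepsilon)\to(\hat x,\hat x)$ can fail: outside $\overline{\Omega}$ one has $u^g=v_g=g$, and a merely continuous bounded $g$ may have pairs of far-away exterior points, arbitrarily close to each other, with $g(x)-g(y)$ close to the oscillation of $g$, which can exceed $M$; then $\sup\Psi_\varepsilon$ is governed by such pairs, the maximizing sequence escapes to infinity, and no viscosity inequality is available there. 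The paper avoids this by maximizing over the compact set $\overline{B_R}\times\overline{B_R}$ with $\overline{\Omega}\subset B_R$. The price is that the maximum-point inequality \eqref{eq:uvpositive} then holds only for shifts keeping both points in $\overline{B_R}$, so the exterior integral must be split further and the mismatch region (where exactly one of the two shifted points leaves $B_R$) treated separately --- in the paper via the convergence of the indicator functions of the line segments plus dominated convergence; equivalently, one can note that on that region both shifted points lie far outside $\overline{\Omega}$, so the integrand difference reduces to $g(x_\varepsilon+tz)-g(y_\varepsilon+tz)$ minus a quantity close to $M$, and is controlled by local uniform continuity of $g$ and the smallness of the kernel there. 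With this localization (and the corresponding adjustment of your $I^2$ estimate) your argument goes through; as written, the step ``take a global maximizer'' and the ensuing pointwise bound for all $t$ are not justified.
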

            \begin{proof}
                Let $R>0$ be big enough such that $\overline{\Omega} \subset \overline{B_R}$. Let
                \[
                    M\coloneqq\sup \Big\{ u^g(x)-v_g(x)\colon x\in \overline{B_R} \Big\}.
                \]
                As usual, we argue by contradiction, that is, we assume that $M>0.$
                Since $u^g$ and $v_g$ are upper and lower semicontinuous functions, 
                \[
                    S\coloneqq \sup\Big\{u^g(x)\colon x\in\overline{B_R} \Big\}-\inf 
                    \Big\{v_g(x)\colon x\in\overline{B_R} \Big\}<\infty.
                \]
                
                For any $\varepsilon>0,$ we define
                \[
                    \Psi_\varepsilon(x,y)\coloneqq u^g(x)-v_g(y)-\dfrac{|x-y|^2}\varepsilon.
                \] 
                Observe that
                \begin{equation}\label{eq:cp1}
                    M\le M_\varepsilon\coloneqq\sup \Big\{\Psi_\varepsilon(x,y)\colon (x,y)\in\overline{B_R}\times\overline{B_R}\Big\}
                    \le S.
                \end{equation}
                Moreover, $M_{\varepsilon_1}\le M_{\varepsilon_2}$ for all $\varepsilon_1\le\varepsilon_2.$ Then, there
                exists the limit
                \begin{equation}\label{eq:cp2}
                    \lim_{\varepsilon \to 0^+} M_\varepsilon=\overline{M} >0 .
                \end{equation}
                
                On the 	other hand, since $u^g$ and $-v_g$ are upper semicontinuous functions, for any $\varepsilon$,
                $\Psi_\varepsilon$ is an upper semicontinuous function. Thus, there is 
                $(x_\varepsilon,y_\varepsilon)\in\overline{B_R}\times\overline{B_R}$ such that
                \begin{equation}\label{eq:xeye}
                    M_\varepsilon=\Psi_\varepsilon(x_\varepsilon,y_\varepsilon).
                \end{equation}
                Observe that
                \[
                    M_{2\varepsilon}\ge \Psi_{2\varepsilon}(x_\varepsilon,y_\varepsilon)=
                    \Psi_{\varepsilon}(x_\varepsilon,y_\varepsilon)+
                    \dfrac{|x_\varepsilon-y_{\varepsilon}|^2}{2\varepsilon}=M_\varepsilon+
                    \dfrac{|x_\varepsilon-y_{\varepsilon} |^2}{2\varepsilon}
                \]
                implies
                \[
                    \dfrac{|x_\varepsilon-y_{\varepsilon}|^2}{\varepsilon}\le 
                    2(M_{2\varepsilon}-M_{\varepsilon})\to 0
                \]
                as $\varepsilon\to 0^+.$ Therefore, we have
                \begin{equation}\label{eq:cp3}
                    \lim_{\varepsilon \to 0^+} \dfrac{|x_\varepsilon-y_{\varepsilon}|^2}{\varepsilon}=0.
                \end{equation}
                
                Since $\overline{B_R}$ is compact, extracting a subsequence if necessary, we can assume that
                \begin{equation}\label{eq:cp4}
                    \lim_{\varepsilon \to 0^+} (x_\varepsilon,y_\varepsilon)\to ({\bar x},{\bar y})\in
                    \overline{B_R}\times\overline{B_R}.
                \end{equation}
                Moreover, by \eqref{eq:cp3}, ${\bar x}={\bar y}$ and
                \[
                \begin{array}{l}
                \displaystyle
                    M\le \overline{M}
                     =\lim_{\varepsilon\to0^+}\Psi_\varepsilon(x_\varepsilon,y_\varepsilon)
                    \\[10pt]
                     \displaystyle \qquad \le\limsup_{\varepsilon\to0^+} \big( u^g(x_\varepsilon)-v_g(y_\varepsilon) \big) \\[10pt]
                     \displaystyle \qquad
                     \le
                    u^g({\bar x})-v_g({\bar x})\le M.
                    \end{array}
                \]

                Thus $M=u^g({\bar x})-v_g({\bar x})$. This limit point $\bar x$ cannot be outside  $\overline{\Omega}$, since $M>0$,  and by Theorem \ref{condicion-borde.77}, it cannot be on $\partial \Omega$. Consequently, $\bar x \in \Omega$ and we may assume (without loss of generality) that
                \[
                    d_\varepsilon=\min \Big\{\dist(x_\varepsilon), 
                    \dist(y_\varepsilon )\Big\}>\frac{\dist({\bar x})}2>0
                \]
                provided $\varepsilon$ is small enough.
                
                On the other hand, by \eqref{eq:xeye}, for any $w\in \RR^N$ such that 
                 we have $$(x_\varepsilon+w,y_\varepsilon+w)\in\overline{B_R}\times\overline{B_R}$$ we have that
                \begin{equation}\label{eq:uvpositive}
                    0\le u(x_\varepsilon)-u^g(x_\varepsilon+w)
                    -v(y_\varepsilon)+v_b(y_\varepsilon+w).
                \end{equation}
                So
                $$
                    \phi_\varepsilon(x)\coloneqq 
                    v(y_\varepsilon)
                    +\dfrac{|x-y_\varepsilon |^2}{\varepsilon},
                    $$
                    and $$
                    \varphi_\varepsilon(y)\coloneqq
                    u(x_\varepsilon)
                    -\dfrac{|x_\varepsilon-y |^2}{\varepsilon} 
                $$
                are test functions for $u$ and $v$ at $x_\varepsilon$ and $y_\varepsilon,$ respectively. 
                Then, we have that
                 \begin{align*}
                    & E_\delta (u^g, \phi_\varepsilon, x_\varepsilon)\leq 0
                    \\
                    &\qquad  \mbox{ and } \\
& E_\delta (v_g, \varphi_{\varepsilon}, y_\varepsilon)\geq 0,
                \end{align*}
                for all $\delta\in(0,d_\varepsilon).$
                
                At this point we have to choose two sequences of directions, one to
                approximate the supremum and another one for the infimum.
                As before, for the subsolution we are free to choose directions in the infimum;
                while for the supersolution we can choose a direction close to the supremum. 

By the definition of $E_\delta$, 
for each $h>0$ there exists $z_{\varepsilon,h}, \widetilde{z}_{\varepsilon,h} \in \mathbb{S}^{N-1} $ such that
\begin{align}
    \inf_{|z|=1}E_{z,\delta} (v_g, \varphi_\varepsilon, y_\varepsilon) + h &\geq E_{z_{\varepsilon,h},\delta} (v_g, \varphi_\varepsilon, y_\varepsilon), \\
    \sup_{|z|=1}E_{z,\delta} (u^g, \phi_\varepsilon, x_\varepsilon) - h &\leq E_{\widetilde{z}_{\varepsilon,h},\delta} (u^g, \phi_\varepsilon, x_\varepsilon),
\end{align}
and thus, we get
\begin{equation} \label{EE}
\begin{array}{l}
\displaystyle    E_{z_{\varepsilon,h},\delta} (u^g, \phi_\varepsilon, x_\varepsilon) + E_{\widetilde{z}_{\varepsilon,h},\delta} (u^g, \phi_\varepsilon, x_\varepsilon) \geq -h
\\[10pt]
\displaystyle  
\qquad \mbox{ and } 
 \\[10pt]
\displaystyle    E_{z_{\varepsilon,h},\delta} (v_g, \varphi_\varepsilon, y_\varepsilon) + E_{\widetilde{z}_{\varepsilon,h},\delta} (v_g, \varphi_\varepsilon, y_\varepsilon) \leq h,
\end{array}
\end{equation}
for any $\delta\in(0,d_\varepsilon).$

Now, our goal is to obtain upper estimates for the differences
\[
\begin{array}{l}
\displaystyle
    E_{z_{\varepsilon,h},\delta}(u^g, \phi_\varepsilon, x_\varepsilon)- 
    E_{z_{\varepsilon,h},\delta} (v_g, \varphi_{\varepsilon}, y_\varepsilon) 
    \\[10pt] \displaystyle \qquad \text{and} \quad 
    \\[10pt] \displaystyle  E_{\widetilde{z}_{\varepsilon,h},\delta}(u^g, \phi_\varepsilon, x_\varepsilon)- 
    E_{\widetilde{z}_{\varepsilon,h},\delta} (v_g, \varphi_{\varepsilon}, y_\varepsilon)
    \end{array}
\]
for $0<\delta<\tfrac{\dist(\bar x )}2$. In this way, when we substract the second expression to the first one in \eqref{EE}, we get a contradiction, provided we controlled the differences properly. 

We can 
assume that $$z_{\varepsilon,h} \to z_0,\quad \widetilde{z}_{\varepsilon,h} \to \widetilde{z}_0 $$
taking a subsequence 
if necessary. 		

To estimate the first difference, let us write
\begin{equation}\label{EE.89}
    \begin{array}{l}
        \displaystyle 
            E_{z_{\varepsilon,h},\delta} (u^g, \phi_\varepsilon, x_\varepsilon)
                =I_1(\delta,\varepsilon,h)+I_2(\delta,\varepsilon,h)+
                I_3(\varepsilon,h), \\[15pt]
        \displaystyle 
                E_{z_{\varepsilon,h},\delta} (v_g, \varphi_{\varepsilon}, y_\varepsilon)
            =J_1(\delta,\varepsilon,h)+J_2(\delta,\varepsilon,h)+
            J_3(\varepsilon,h),
    \end{array}
\end{equation}
where
\[
    \begin{array}{l}
        \displaystyle 
        I_1(\delta,\varepsilon,h)\coloneqq \int_{-\delta}^\delta 
            \frac{
                \phi_{\varepsilon}(x_{\varepsilon}+t z_{\varepsilon,h})-
                    \phi_{\varepsilon}(x_{\varepsilon})}{|t|^{1+2s}}dt,
            \\[15pt]
        \displaystyle 
            I_2(\delta,\varepsilon,h)
            \coloneqq \int_{A_\delta^{z_{\varepsilon,h}}(x_\varepsilon)} 
            \frac{u^g(x_{\varepsilon}+t z_{\varepsilon,h})-u(x_\varepsilon)}{|t|^{1+2s}}dt,
        \\[15pt]
        \displaystyle
        I_3(\varepsilon,h)
        \coloneqq \int_{\R\setminus L_{z_{\varepsilon,h}}(x_\varepsilon)} 
        \frac{g(x_{\varepsilon}+t z_{\varepsilon,h})-u(x_\varepsilon)}{|t|^{1+2s}}dt,
    \end{array}
\]
and
\[
    \begin{array}{l}
            \displaystyle
            J_1(\delta,\varepsilon,h)\coloneqq \int_{-\delta}^\delta \frac{\varphi_{\varepsilon}
            (y_{\varepsilon}+t z_{\varepsilon,h})-
            \varphi_{\varepsilon, \mu}(y_\varepsilon)}{|t|^{1+2s}}dt,
            \\[15pt]
         \displaystyle
            J_2(\delta,\varepsilon,h)
            \coloneqq\int_{A_\delta^{z_{\varepsilon,h}}(y_\varepsilon)} 
        \frac{v_g(y_\varepsilon+t z_{\varepsilon,h})-v(y_\varepsilon)}{|t|^{1+2s}}dt,
        \\[15pt]
        \displaystyle
        J_3(\varepsilon,h)
        \coloneqq
        \int_{\R\setminus
        L_{z_{\varepsilon,h}}(y_\varepsilon)} \frac{g(y_\varepsilon 
        +t z_{\varepsilon,h})-v(y_\varepsilon)}{|t|^{1+2s}}dt,
    \end{array}
\]
with $A_\delta^{z}(x)\coloneqq L^R_{z}(x_\varepsilon) \setminus (-\delta,\delta)$, $L^R_z(x) =  \{t \in \mathbb{R}: x + tz \in B_R\}$.

We first observe that there is a positive constant $C$ independent of $\delta,$ 
$\varepsilon$ and $h$ such that
\[
    \max \Big\{|I_1(\delta,\varepsilon,h)|, |J_1(\delta,\varepsilon,h)|\Big\}
    \le C\dfrac{\delta^{2-2s}}{\varepsilon}.
\]

For the estimate of $I_2(\delta,\varepsilon,h)-J_2(\delta,\varepsilon,h),$ 
we use that $x_\varepsilon,y_\varepsilon \to \bar x$ and $z_{\varepsilon,h} \to z_0$ to get,
\[
    \mathbf{1}_{L^R_{z_{\varepsilon,h}}(x_\varepsilon)}, \mathbf{1}_{L^R_{z_{\varepsilon,h}}
    (y_\varepsilon)} \to 
    \mathbf{1}_{L^R_{z_0}(\bar x)}\quad \mbox{ a.e. as }\quad\varepsilon,h \to 0.
\]
Hence,
\[\begin{aligned}
|\mathbf{1}_{L^R_{z_{\varepsilon,h}}(x_\varepsilon)}- \mathbf{1}_{L^R_{z_{\varepsilon,h}}(x_\varepsilon) \cap L^R_{z_{\varepsilon,h}}(y_\varepsilon) }| &\to 
    0\quad \mbox{ a.e. as }\quad\varepsilon,h \to 0, \\
|\mathbf{1}_{L^R_{z_{\varepsilon,h}}(y_\varepsilon)}- \mathbf{1}_{L^R_{z_{\varepsilon,h}}(x_\varepsilon) \cap L^R_{z_{\varepsilon,h}}(y_\varepsilon) }| &\to 
0\quad \mbox{ a.e. as }\quad\varepsilon,h \to 0.
\end{aligned}\]
Then, we divide the integrals $I_2$ and $J_2$ further
\[\begin{aligned}
I_2(\delta,\varepsilon,h)
&= \int_{\mathbb{R}\backslash (-\delta, \delta)} \mathbf{1}_{L^R_{z_{\varepsilon,h}}(x_\varepsilon)} \frac{u^g(x_{\varepsilon}+t z_{\varepsilon,h})-u(x_\varepsilon)}{|t|^{1+2s}}dt \\
&= \int_{\mathbb{R}\backslash (-\delta, \delta)} \Big(\mathbf{1}_{L^R_{z_{\varepsilon,h}}(x_\varepsilon)} - \mathbf{1}_{L^R_{z_{\varepsilon,h}}(x_\varepsilon) \cap L^R_{z_{\varepsilon,h}}(y_\varepsilon) } \Big) \frac{u^g(x_{\varepsilon}+t z_{\varepsilon,h})-u(x_\varepsilon)}{|t|^{1+2s}}dt \\
&\qquad + \int_{\mathbb{R}\backslash (-\delta, \delta)} \mathbf{1}_{L^R_{z_{\varepsilon,h}}(x_\varepsilon) \cap L^R_{z_{\varepsilon,h}}(y_\varepsilon)} \frac{u^g(x_{\varepsilon}+t z_{\varepsilon,h})-u(x_\varepsilon)}{|t|^{1+2s}}dt,
\end{aligned}\]
\[\begin{aligned}
J_2(\delta,\varepsilon,h)
&= \int_{\mathbb{R}\backslash (-\delta, \delta)} \mathbf{1}_{L^R_{z_{\varepsilon,h}}(y_\varepsilon)} \frac{v_g(y_\varepsilon+t z_{\varepsilon,h})-v(y_\varepsilon)}{|t|^{1+2s}}dt \\
&= \int_{\mathbb{R}\backslash (-\delta, \delta)} \Big(\mathbf{1}_{L^R_{z_{\varepsilon,h}}(y_\varepsilon)} - \mathbf{1}_{L^R_{z_{\varepsilon,h}}(x_\varepsilon) \cap L^R_{z_{\varepsilon,h}}(y_\varepsilon) } \Big) \frac{v_g(y_\varepsilon+t z_{\varepsilon,h})-v(y_\varepsilon)}{|t|^{1+2s}}dt \\
&\qquad + \int_{\mathbb{R}\backslash (-\delta, \delta)} \mathbf{1}_{L^R_{z_{\varepsilon,h}}(x_\varepsilon) \cap L^R_{z_{\varepsilon,h}}(y_\varepsilon)} \frac{v_g(y_\varepsilon+t z_{\varepsilon,h})-v(y_\varepsilon)}{|t|^{1+2s}}dt.  
\end{aligned}\]

Now, we observe that
the terms in which we have the difference of characteristic functions go to zero, using the boundedness of $u^g,v_g$ and dominated convergence theorem. The difference of the terms that have a segment in common is negative thanks to \eqref{eq:uvpositive}. 

Collecting all these bounds we get 
\[\limsup_{\delta, \varepsilon, h \to 0^+}I_2(\delta,\varepsilon,h) - J_2(\delta,\varepsilon,h) \leq 0.\]

Finally, we observe that we have
\[
    \mathbf{1}_{\mathbb{R} \setminus L^R_{z_{\varepsilon,h}}(x_\varepsilon)}, 
    \mathbf{1}_{\mathbb{R} \setminus L^R_{z_{\varepsilon,h}}(y_\varepsilon)} 
    \to \mathbf{1}_{\mathbb{R} \setminus L^R_{z_0}(\bar x)} \quad\mbox{ a.e. as }\quad\varepsilon,h \to 0,
\]
and hence, arguing similarly to the previous case and using that $g$ is a bounded continuous function, we obtain
\[
    \lim_{\varepsilon,h\to0}I_3(\varepsilon,h)-J_3(\varepsilon,h)
    = -M \int_{L_{z_0}(\bar x)\cap (\mathbb{R}^N \setminus \Omega)} \frac{dt}{|t|^{1+2s}}. \quad 
\]
The negative term, $-M$, comes from the fact that $$\lim_{\varepsilon \to 0^+} (u(x_\varepsilon)-v(x_\varepsilon))= M.$$
Therefore, letting first $\delta \to 0$, then $\varepsilon \to 0 $, 
and $h \to 0,$ we get
\begin{equation}\label{eq:final}
\begin{array}{l}
\displaystyle
    I_1(\delta,\varepsilon,h)-J_1(\delta,\varepsilon,h)+I_2(\delta,\varepsilon,h)-
    J_2(\delta,\varepsilon,h)+
                I_3(\varepsilon,h)-J_3(\varepsilon,h) \\[6pt]
                \displaystyle \qquad 
                \to -M \int_{L_{z_0}(\bar x)\cap (\mathbb{R}^N \setminus \Omega)}
                \frac{dt}{|t|^{1+2s}}.
                \end{array}
\end{equation}
We conclude that
\[
\begin{array}{l}
\displaystyle 
 \lim_{h \to 0} \lim_{\varepsilon \to 0}  \lim_{\delta\to 0} 
 \Big\{ E_{z_h,\delta} (u^g, \Psi_{\varepsilon, \mu}(\cdot,\bar y), \bar x)- 
    E_{z_h,\delta} (v_g, \Psi_{\varepsilon, \mu}( \bar x, \cdot)  \Big\}
   \\[10pt]
   \qquad \displaystyle  \leq   -M \int_{L_{z_0}(\bar x)\cap (\mathbb{R}^N \setminus \Omega)} \frac{dt}{|t|^{1+2s}}<0.
   \end{array}
\]

For the supremum part the limit can be bounded by exactly the same
quantity (with a possible different limit direction $\widetilde{z}_0$),
\[
\begin{array}{l}
\displaystyle 
 \lim_{h \to 0} \lim_{\varepsilon \to 0}  \lim_{\delta\to 0} 
 \Big\{ E_{\widetilde{z}_h,\delta} (u^g, \Psi_{\varepsilon, \mu}(\cdot,\bar y), \bar x)- 
    E_{\widetilde{z}_h,\delta} (v_g, \Psi_{\varepsilon, \mu}( \bar x, \cdot)  \Big\}
   \\[10pt]
   \qquad \displaystyle  \leq   -M \int_{L_{\widetilde{z}_0}(\bar x)\cap (\mathbb{R}^N \setminus \Omega)} \frac{dt}{|t|^{1+2s}}<0.
   \end{array}
\]
 The fact that we get a negative bound came from having 
  $$\lim_{\varepsilon \to 0^+} (u(x_\varepsilon)-v(x_\varepsilon))= M,$$
 and not from the fact that we analyze the infimum. 

Thus, substracting both expressions of \eqref{EE} and taking limits we obtain
we get
\begin{align*}
0 = \lim_{h\to 0} - 2h &\leq  \lim_{h \to 0} \lim_{\varepsilon \to 0}  \lim_{\delta\to 0} \Big\{ E_{z_h,\delta} (u^g, \Psi_{\varepsilon, \mu}(\cdot,\bar y), \bar x)- 
    E_{z_h,\delta} (v_g, \Psi_{\varepsilon, \mu}( \bar x, \cdot) \Big\} \\[6pt]
    & \qquad +  \lim_{h \to 0} \lim_{\varepsilon \to 0}  \lim_{\delta\to 0} 
    \Big\{ E_{\widetilde{z}_h,\delta} (u^g, \Psi_{\varepsilon, \mu}(\cdot,\bar y), \bar x)- 
    E_{\widetilde{z}_h,\delta} (v_g, \Psi_{\varepsilon, \mu}( \bar x, \cdot) \Big\} \\[6pt]
    & \leq  -M \int_{L_{z_0}(\bar x)\cap (\mathbb{R}^N \setminus \Omega)} \frac{dt}{|t|^{1+2s}}
    -M \int_{L_{\widetilde{z}_0}(\bar x)\cap (\mathbb{R}^N \setminus \Omega)} \frac{dt}{|t|^{1+2s}} <0.
\end{align*}
and we end up with the desired contradiction.
\end{proof}

 \begin{remark} \label{rem-com-general} This proof can be extended to deal with the problem involving 
     the trace fractional laplacian $(-\Delta)_{tr}^s$. As before we just observe that, for the
     terms that involve the intermediate fractional eigenvalues,
      \begin{equation} \label{fract-eigen.99678}
       \Lambda_i^s u (x) = \max_{{dim} (S) = N-i+1}
       \min_{\substack{z \in S, \\ |z| =1}}  \left\{  \int_{\mathbb{R}} 
                             \frac{u(x+tz)-u(x)}{|t|^{1+2s}} 
                             \, dt
                         \right\},
       \end{equation}
       with $i=2,...,N-1$, we can choose a subspace and then a direction that almost reach the associated quantity
       $$
       \sup_{{dim} (S) = N-i+1}
       \inf_{ \substack{z \in S, \\ |z| =1}}  
       E_{z, \delta_\varepsilon}(u, \omega_\varepsilon, x_\varepsilon).
       $$

With the same computations
one can obtain a comparison principle (and then existence and 
uniqueness of solutions, see below)  for $C^2$ domains and continuous and bounded exterior
 data, $g$,
 for problems of the form
 \begin{equation} \label{Main-eq-section.4466} 
        \left\{
             \begin{array}{ll}
                \displaystyle \sum_{i=1}^N a_i \Lambda_i^s u (x) =0, \qquad & x \in \Omega, \\[6pt]
                         u (x) = g (x), \qquad & x \in \mathbb{R}^N\setminus \Omega,
             \end{array}
             \right.
         \end{equation}    
as long as $a_1>0$, $a_N>0$ and $a_i \geq 0$, $i=2,...,N-1$
with $g$ continuous and bounded and $\Omega$ a $C^2$ bounded domain.

 If only one of the maximum or minimum fractional eigenvalues
 ($\Lambda_1^s u$ or $\Lambda_N^s u$)
 is involved in the operator, then the proof still works (as in \cite{BdpQR}) 
 with the extra assumption that the domain is strictly convex. This assumption
 (strict convexity) ensures that, close to the boundary, the line in any direction
 reaches the boundary close to the nearest point on the boundary. 
 
  We refer to Section \ref{sect-extension}
 for
extra comments on extensions of our results. 
           \end{remark}

\subsection{Existence and uniqueness of a solution} 
This part is standard in the viscosity theory once one has at hand
a comparison principle, but we include the details here for
completeness.

	Now our goal is to show existence and uniqueness of a solution to
	\eqref{Main-eq-section} (the equation involves 
	only $\Lambda_1^s u$ and $\Lambda_N^s u$) and \eqref{Main-eq-section.44}
	(the equation is given by the trace fractional Laplacian, that is, the sum of the 
	fractional eigenvalues). The same proof works for any of the operators
	that involve fractional eigenvalues as long as we have a comparsion principle,
	see Remark \ref{rem-com-general}.
	
	The proof of existence is by
	using Perron's method and uniqueness is immediate
	from the comparison principle.

	\begin{theorem}\label{EyU.77.99} 
		Assume that $g\in C(\R^N\setminus {\Omega})$ is bounded 
		and $\Omega$ is a bounded $C^2-$domain.  
		Then, there is a unique viscosity solution $u$ to 
		\eqref{Main-eq-section} or to \eqref{Main-eq-section.44} 
		in $\Omega$, in the sense of Definition \ref{defsol.44}.
		This unique solution is continuous in $\overline{\Omega}$ and the datum $g$ is taken with continuity, 
		that is, $u|_{\partial \Omega} = g|_{\partial \Omega}$.
	\end{theorem}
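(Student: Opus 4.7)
The plan is to deduce both uniqueness and existence from the machinery already established in this section. Uniqueness is immediate from the comparison principle of Theorem \ref{teo:cp}: if $u_1, u_2$ are two viscosity solutions sharing the same exterior datum $g$, then applying Theorem \ref{teo:cp} twice (first with $u=u_1$, $v=u_2$ and then with the roles reversed) forces $u_1 \equiv u_2$ in $\R^N$.

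For existence I will run Perron's method in the nonlocal viscosity framework (along the lines of \cite{BarChasImb}). First I build two trivial barriers: since the operator vanishes on constants and is $1$-homogeneous, the functions $\overline{u} \equiv \|g\|_{L^\infty(\R^N\setminus\Omega)}$ and $\underline{u} \equiv -\|g\|_{L^\infty(\R^N\setminus\Omega)}$ (each extended by $g$ outside $\Omega$) yield a global supersolution and subsolution with $\underline{u} \leq g \leq \overline{u}$ outside $\Omega$. Then I define
\[
u(x) \coloneqq \sup\Bigl\{ w(x) : w \text{ is a viscosity subsolution with } \underline{u} \leq w \leq \overline{u} \Bigr\},
\]
and verify, by the standard Perron argument, that the upper semicontinuous envelope $\tilde{u}$ is a viscosity subsolution while the lower semicontinuous envelope $\undertilde{u}$ is a viscosity supersolution. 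The comparison principle of Theorem \ref{teo:cp} then gives $\tilde{u} \leq \undertilde{u}$, while the reverse inequality $\undertilde{u}\le \tilde{u}$ is automatic from the definition of the semicontinuous envelopes, so $\tilde{u} = \undertilde{u} = u$ is continuous on $\overline{\Omega}$ and is the sought viscosity solution. Boundary continuity $u|_{\partial\Omega} = g|_{\partial\Omega}$ follows by applying Theorem \ref{condicion-borde.77} to $\tilde{u}$ (as a subsolution, giving $\tilde u \le g$ on $\partial\Omega$) and to $\undertilde{u}$ (as a supersolution, giving $\undertilde u \ge g$ on $\partial\Omega$) and pinching. The same argument covers \eqref{Main-eq-section.44}, since Remark \ref{rem-com-general} extends both the comparison principle and the boundary trace statement to the trace fractional Laplacian.

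The only subtle point, which I expect to be the main obstacle, is verifying that $\undertilde{u}$ is a supersolution. This is the standard bump perturbation step: if $\undertilde{u}$ failed to satisfy the supersolution inequality at some interior point $x_0$, one would produce a strictly larger admissible subsolution by adding a small, smooth, compactly supported positive bump near $x_0$, contradicting the maximality defining $u$. The nonlocal twist is that the tail contributions $I^2_{z,\delta}$ must be controlled uniformly in $z\in \mathbb{S}^{N-1}$ so that the perturbation strictly modifies both $\inf_{|z|=1} E_{z,\delta}$ and $\sup_{|z|=1} E_{z,\delta}$ (or, for the trace operator, each of the max-min quantities associated with the intermediate eigenvalues in \eqref{fract-eigen}) in the right direction simultaneously, which forces the bump to be chosen with sufficiently small support and amplitude to preserve the admissibility constraint $w\le \overline{u}$.
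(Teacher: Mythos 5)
Your proposal is correct in substance, and the uniqueness step, the use of the barriers $\pm\|g\|_{L^\infty}$, and the pinching of the boundary values via Theorem \ref{condicion-borde.77} coincide with the paper; but the existence step takes a genuinely different route. The paper does not run the classical supremum-of-subsolutions Perron argument for the operator itself: it introduces a family of obstacle problems $H_k(x,u)=\min\{u-\psi_-^k,\max\{u-\psi_+^k,-\Lambda_1^s u\}\}$ with continuous obstacles $\psi_\pm^k=g$ outside $\Omega$ and $\psi_\pm^k\to\pm\infty$ in $\Omega$, solves each one by invoking the general Perron theorem of \cite{BarImb} (checking only the degenerate ellipticity assumption $(E)$), and then passes to the half-relaxed limits $\bar u=\limsup^* u_k$, $\underline u=\liminf_* u_k$, which are respectively a sub- and a supersolution equal to $g$ outside $\Omega$; the comparison principle of Theorem \ref{teo:cp} then forces $\bar u=\underline u$, giving a continuous solution attaining $g$ on $\partial\Omega$. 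You instead take the direct Perron supremum over all subsolutions trapped between the barriers, which requires proving in this nonlocal inf-plus-sup setting the two classical lemmas you flag: that the upper envelope $\tilde u$ is a subsolution, and (via the bump perturbation) that $\undertilde{u}$ is a supersolution. Both do go through here, essentially because the tail terms $I^2_{z,\delta}$ are monotone when the candidate function is raised away from the test point, so enlarging a subsolution or adding a small compactly supported bump only improves the inequality $\inf_{|z|=1}E_{z,\delta}+\sup_{|z|=1}E_{z,\delta}\ge 0$ at the other touching points, while the failure of the supersolution inequality at $x_0$ for one fixed $\delta$ and test function provides the strict room the bump must beat; carrying this out uniformly in $z$ (and over the max--min subspace quantities for \eqref{Main-eq-section.44}, as in Remark \ref{rem-com-general}) is precisely the technical work your sketch defers, and it is what the paper avoids by outsourcing existence for the obstacle problems to \cite{BarImb}. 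In short: your route is self-contained and avoids the auxiliary family $\psi_\pm^k$ at the price of proving the two Perron lemmas for this specific operator; the paper's route is shorter because the ellipticity check plus the half-relaxed limit argument replace the bump construction, with comparison doing the final work in both cases.
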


	\begin{proof} Again we use ideas from \cite{BarChasImb} to obtain the existence of 
		a viscosity solution to our Dirichlet problem.

		Existence of $u,v\colon \mathbb{R}^N\to\mathbb{R}$ that are a viscosity 
		subsolution and a viscosity supersolution 
		 in $\Omega$, in the sense of Definition \ref{defsol.44}, 
		follows easily taking large constants (here we are using that
		$g$ is bounded).

		We take a one-parameter family of continuous functions 
		$\psi_{\pm}^k\colon \R^N \to \R$ such that $\psi_+^k \geq \psi_-^k$ 
		in $\R^N$ and $\psi^k_{\pm} = g$ in $\R^N\setminus\Omega.$
		Then for all $k\in\N$ we consider the obstacle problem 
		\begin{equation}\label{obstacle}
			H_k(x, u)\coloneqq \min \Big\{ u(x) - \psi_-^k(x), 
			\max \big\{ u(x) - \psi_+^k(x), -\Lambda_1^s u (x) u(x) \big\} \Big\},
		\end{equation}
		for $x \in \R^N$,
		which is degenerate elliptic, that is, it satisfies the general assumption $(E)$ of 
		\cite{BarImb}. It has $\pm\|g\|_\infty$ as viscosity super and subsolution. 
		Recall that these viscosity supersolution and subsolution do not depend on the $L^\infty$ bounds 
		of $\psi^k_{\pm}$. Then, in view of the general Perron's method given in \cite{BarImb} 
		for problems in $\R^N$, since condition $(E)$ holds, we conclude the existence of a continuous bounded
		viscosity solution $u_k$ to~\eqref{obstacle} for each $k$ and, in addition, 
		this family of solutions is equal to $g$ in $\R^N\setminus\Omega$ for all $k$. Moreover,
		we have that
        \[\|u_k\|_{L^\infty (\Omega)} \leq \|g\|_{L^\infty (\mathbb{R}^N \setminus \Omega)} \quad \text{and} \quad u_k = g \in \RR^N\setminus \Omega\]
        for all $k \in \mathbb{N}$.

		Then, we consider $\psi_{\pm}^k$ in such a way $\psi_{\pm}^k(x) \to \pm \infty$ as $k \to \infty$ 
		for all $x \in \Omega$ and denoting
		\begin{equation*}
			\bar u(x) = \limsup \limits_{k \to \infty, y \to x} u_k(y); \quad 
			\underline u(x) = \liminf \limits_{k \to \infty, y \to x} u_k(y),
		\end{equation*}
		which are well defined for all $x \in \R^N$, 
		we clearly have that $$\bar u \geq \underline u$$ in $\R^N$.
		Thus, we have $$\underline u = \bar u = g$$ in $ \R^N\setminus\Omega$ and 
		$\bar u$ and $\underline u$ respectively are a viscosity subsolution and a 
		viscosity supersolution 
		to our problem. Thus, by comparison we get
		$$\bar u \leq \underline u$$ in $\R^N$, and therefore we conclude that $\bar u$ and $\underline u$
		coincide and that
		$$u:=\bar u = \underline u$$
		is a continuous viscosity solution that satisfies the boundary condition 
		in the classical sense.

		Uniqueness of solutions follows from the comparison principle.
\end{proof}

The following corollary is immediate and we just estate it here
since it is a part of Theorem \ref{teo.compar.intro}.

\begin{corollary}
The comparison principle implies that if $g_1\geq g_2$ are boundary conditions of solutions $u_1$ and $u_2$ respectively, we have that $u_1\geq u_2$. 
\end{corollary}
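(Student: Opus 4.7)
The plan is to reduce the corollary directly to Theorem \ref{teo:cp} by reinterpreting the solution $u_2$ (associated with the smaller exterior datum) as a subsolution of the Dirichlet problem with the larger datum $g_1$, and then comparing it against $u_1$.

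First, I would make the following observation. By definition $u_2$ is a viscosity solution of the equation in $\Omega$ in the sense of Definition \ref{defsol.44}, so it is in particular a viscosity subsolution of the same equation. The boundary/exterior requirement in the subsolution condition only asks that $u_2\leq g$ on $\mathbb{R}^N\setminus\overline{\Omega}$, and since $u_2=g_2\leq g_1$ there by hypothesis, $u_2$ satisfies the condition $u_2\leq g_1$ in $\mathbb{R}^N\setminus\overline{\Omega}$. Consequently, $u_2$ qualifies as a viscosity subsolution of the Dirichlet problem \eqref{Main-eq-section} (resp.\ \eqref{Main-eq-section.44}) with exterior datum $g_1$. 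On the other hand, $u_1$ is by construction a viscosity supersolution of the very same problem, since it is a solution.

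The comparison principle established in Theorem \ref{teo:cp} (and its extension to $(-\Delta)^s_{tr}$ noted in Remark \ref{rem-com-general}) then applies to the pair $(u_2,u_1)$ with common exterior datum $g_1$, yielding
\[
u_2 \leq u_1 \qquad \text{in } \mathbb{R}^N,
\]
which is the claimed inequality. No genuine obstacle arises: the argument is a direct bookkeeping of who plays the role of sub/supersolution, made possible because the notion of (sub)solution in Definition \ref{defsol.44} compares the exterior values against a datum through an inequality rather than an equality.
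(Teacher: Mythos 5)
Your proposal is correct and follows essentially the paper's own (one-line) argument: since $u_2=g_2\le g_1$ outside $\Omega$, $u_2$ serves as a viscosity subsolution of the problem with exterior datum $g_1$, and comparing it with the supersolution $u_1$ via Theorem \ref{teo:cp} gives $u_2\le u_1$. The only step left implicit, which is worth a word, is that replacing the exterior values $g_2$ by the larger $g_1$ only increases the nonlocal terms $E_{z,\delta}$, so the subsolution inequality $E_\delta(u_2^{g_1},\phi,x_0)\le E_\delta(u_2^{g_2},\phi,x_0)\le 0$ is preserved.
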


\begin{proof} This follows from considering $u_2$ a supersolution for the boundary condition $g_1$ or considering $u_2$ a subsolution for the boundary condition $g_2$.
\end{proof}

Also as an immediate corollary of the comparison principle
we obtain continuous dependence of the solution
with respect to the exterior data, this gives another part of Theorem \ref{teo.compar.intro}.

\begin{corollary}
The comparison principle implies that 
the solution depends continuously
        on the exterior data; it holds that
        $$
        \| u_1 - u_2 \|_{L^\infty (\Omega)} \leq \| g_1 - g_2 \|_{L^\infty (\mathbb{R}^N \setminus \Omega)}.
        $$
\end{corollary}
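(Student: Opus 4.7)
The plan is to reduce the $L^\infty$ estimate to two applications of the comparison principle from Theorem \ref{teo:cp} (equivalently, from the preceding corollary). The key preliminary observation is that both operators $(-\Delta)^s_{tr}$ and $(-\Delta)^s_{mid}$ are invariant under the addition of constants, since every $\Lambda_i^s u(x)$ is defined through the differences $u(x+tz)-u(x)$ in the one-dimensional fractional Laplacians. Hence, if $u_2$ is the unique viscosity solution with exterior datum $g_2$ and $c\in\mathbb{R}$, then $u_2+c$ is the unique viscosity solution with exterior datum $g_2+c$.

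Set $M\coloneqq \|g_1-g_2\|_{L^\infty(\mathbb{R}^N\setminus\Omega)}$. Then
\[
g_2 - M \;\le\; g_1 \;\le\; g_2 + M \qquad \text{in } \mathbb{R}^N\setminus\Omega.
\]
By the observation above, $u_2\pm M$ are the unique viscosity solutions with exterior data $g_2\pm M$, respectively. The first comparison principle from the previous corollary applied to the pair with data $g_1$ and $g_2+M$ gives $u_1\le u_2+M$ in $\Omega$, and applied to the pair with data $g_2-M$ and $g_1$ gives $u_2-M\le u_1$ in $\Omega$. Combining both,
\[
|u_1(x)-u_2(x)|\le M \qquad \text{for every } x\in\Omega,
\]
which is the desired inequality.

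There is essentially no obstacle here, since all the work is already packaged in the comparison principle, Theorem \ref{teo:cp}, and in the translation invariance of the operators; I would only mention explicitly that $u_2\pm M$ are viscosity solutions of the same equation with shifted exterior data (a one-line verification from Definition \ref{defsol.44}, because adding a constant leaves every $E_{z,\delta}$ unchanged and does not affect the $g$-extensions other than by the same constant). This is the reason the result is stated as a corollary: once comparison and the (trivial) constant-shift invariance are available, the standard $L^\infty$ contraction follows with no further analysis.
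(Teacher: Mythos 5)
Your proof is correct and follows essentially the same route as the paper: the paper simply observes that $u_2 \pm \| g_1 - g_2 \|_{L^\infty (\mathbb{R}^N \setminus \Omega)}$ are a super- and a subsolution for the datum $g_1$ (using the same constant-shift invariance you invoke) and applies the comparison principle twice. Your rephrasing through solutions with shifted exterior data $g_2 \pm M$ is an equivalent packaging of the identical argument.
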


\begin{proof} Just observe that 
$$u_2 (x) + \| g_1 - g_2 \|_{L^\infty (\mathbb{R}^N \setminus \Omega)} $$
is a supersolution to the problem with exterior datum $g_1$, hence, by comparison we get
$$u_1(x) \leq u_2 (x) + \| g_1 - g_2 \|_{L^\infty (\mathbb{R}^N \setminus \Omega)}.$$

In a similar way, we get that
$$u_1(x) \geq u_2 (x) - \| g_1 - g_2 \|_{L^\infty (\mathbb{R}^N \setminus \Omega)},$$
since the left hand side is 
a subsolution to the problem with exterior datum $g_1$.
\end{proof}

Now, our goal is to show that \eqref{Main-eq-section} has a strong maximum principle,
a viscosity subsolution to the problem can not attain 
the maximum inside $\Omega$ unless it is constant.

\begin{theorem}[Strong maximum principle]
Let $u$ be a subsolution to problem \eqref{Main-eq-section}
such that $u^g$ attains a maximum at some point $\bar x \in \Omega$. Then $u$ is constant. 
\end{theorem}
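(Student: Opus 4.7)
The plan is to exploit the fact that, at a global maximum of $u^g$, every one-dimensional fractional increment is sign-definite, and to test with a constant function. Set $M \coloneqq u^g(\bar x)$ and take $\phi \equiv M$ as the test function. Since $\bar x \in \Omega$, we have $u(\bar x) = u^g(\bar x) = M$, so $u - \phi$ attains its maximum at $\bar x$ in every ball $B_\delta(\bar x) \subset \Omega$, and $\phi$ is an admissible $C^2$ test function from above in the sense of Definition \ref{defsol.44}.

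For a constant $\phi$ the inner integral vanishes, $I^1_{z,\delta}(\phi, \bar x) = 0$, so the subsolution inequality $E_\delta(u^g, \phi, \bar x) \leq 0$ reduces to
\begin{equation*}
\inf_{|z|=1} I^2_{z,\delta}(u, \bar x) + \sup_{|z|=1} I^2_{z,\delta}(u, \bar x) \geq 0.
\end{equation*}
Because $u^g \leq M$ on $\mathbb{R}^N$, each integrand $u^g(\bar x + tz) - M$ is non-positive, so both the $\inf$ and the $\sup$ above are $\leq 0$; combined with their sum being $\geq 0$, both must vanish. The vanishing of the infimum, together with every $I^2_{z,\delta}(u, \bar x)$ being $\leq 0$, then forces $I^2_{z,\delta}(u, \bar x) = 0$ for every $z \in \mathbb{S}^{N-1}$.

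Since the integrand is non-positive and integrates to zero, it vanishes almost everywhere, so $u^g(\bar x + tz) = M$ for a.e.\ $|t| > \delta$; letting $\delta \to 0$ and varying $z$ gives $u^g = M$ almost everywhere on $\mathbb{R}^N$, and the upper semicontinuity of $u^g$ then promotes this to $u^g \equiv M$ pointwise. Hence $u \equiv M$ on $\Omega$ and $g \equiv M$ on $\mathbb{R}^N \setminus \overline{\Omega}$, which is the claim. The only delicate point is the two-step sign analysis: the subsolution definition only directly furnishes $\inf + \sup \geq 0$, and to squeeze each one-dimensional fractional derivative individually to zero one must combine this with the universal non-positivity inherited from $\bar x$ being a maximum. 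The same argument adapts to $(-\Delta)^s_{tr}$, since every max-min term $\Lambda_i^s u(\bar x)$ in \eqref{Main-eq-section.44} is itself non-positive at a maximum and the inequality $\sum_{i=1}^N \Lambda_i^s u(\bar x) \geq 0$ forces each of them to vanish.
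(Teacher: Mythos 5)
Your proof is correct, and it starts from the same two observations as the paper's: at a global maximum of $u^g$ a constant test function is admissible, the local terms $I^1_{z,\delta}$ drop out, every directional integral satisfies $I^2_{z,\delta}(u,\bar x)\le 0$, and the subsolution inequality gives $\inf_{|z|=1} I^2_{z,\delta}+\sup_{|z|=1} I^2_{z,\delta}\ge 0$. Where you diverge is the finish. The paper argues by contradiction: assuming $u$ is not constant, it picks a point $x_0$ with $u(x_0)<u(\bar x)$, chooses in the infimum the direction pointing from $\bar x$ towards $x_0$ (upper semicontinuity makes that single integral strictly negative on an interval of $t$) and a nearly optimal direction in the supremum, and contradicts $\inf+\sup\ge 0$. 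You instead squeeze: since both the infimum and the supremum are nonpositive while their sum is nonnegative, every $I^2_{z,\delta}$ must vanish; a nonpositive integrand with zero integral is zero a.e., and letting $\delta\to 0$, integrating over directions (Fubini in polar coordinates) and invoking the upper semicontinuity of $u^g$ (nearby values equal to $M$ force $u^g(x)\ge M$) you obtain $u^g\equiv M$ on all of $\mathbb{R}^N$. This direct route yields a slightly stronger conclusion than the paper's argument --- $u\equiv M$ in $\Omega$ \emph{and} $g\equiv M$ outside, which is the form stated in Theorem \ref{teo.compar.intro} --- at the cost of a routine measure-theoretic step. Your closing remark on $(-\Delta)^s_{tr}$ is also sound, and can even be shortened: once each term vanishes, the vanishing of the $\Lambda_1^s$-term (the pure infimum over all directions) alone already forces all directional integrals to be zero, so the same conclusion follows.
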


\begin{proof} We argue by contradiction.
Let us assume $u$ is not constant. That means there is some $x_0$ in $\Omega$ such that $u(\bar x)> u(x_0)$. Since $\bar x \in \Omega$, we can take $0<\delta<|\bar x - x_0|/2$ and $\phi(x) = u(\bar x)$ for $x \in \overline{B_\delta}(\bar x)$. We extend this $\phi$ to all $\RR^N$ in a smooth way. Since u is a subsolution, 
\[\inf_{z \in \mathbb{S}^{N-1}} E_{z,\delta} (u^g, \phi, \bar{x}) + \sup_{z \in \mathbb{S}^{N-1}} E_{z,\delta} (u^g, \phi, \bar{x}) \geq 0.\]
For the infimum, we can choose a direction freely, so let us choose $z_0=x_0-\bar x$. For the supremum, fix $z_k$ such that 
\[\sup_{z \in \mathbb{S}^{N-1}} E_{z,\delta} (u^g, \phi, \bar{x}) \leq E_{z_k,\delta} (u^g, \phi, \bar{x}) + \frac{1}{k}. \]
This way we get 
\[-\frac{1}{k} \leq E_{z_0,\delta} (u^g, \phi, \bar{x}) +  E_{z_k,\delta} (u^g, \phi, \bar{x}). \]
Both expressions lack the integral where the test function appears due to the fact that 
the test function is constant. What we have to estimate is 
\[I(z) = \int_{\RR \setminus(-\delta, \delta)} \frac{u^g(\bar x + t z) - u(\bar x)}{|t|^{1+2s}}dt\]
For every $z$ we have that $I(z) \leq 0$ thanks to the fact that the difference inside the integral
is nonpositive. But for $z_0$, we get an estrict inequality. The reason behind this is that upper continuity grants the existence of a ball centered at $x_0$ of radius sufficiently small so that for every $y$ in that ball, $u(y)<u(\bar x)$. This yields that for a certain interval centered around $t=1$, the difference $u(\bar x + tz_0)-u(\bar x)<0$, so we can conclude that $I(z_0)<0$. Hence, 
\[-\frac{1}{k} \leq E_{z_0,\delta} (u^g, \phi, \bar{x}) +  E_{z_k,\delta} (u^g, \phi, \bar{x}) < 0\]
Taking the limit $k \to +\infty$ we get $0<0$, arriving at the desired contradiction. 
\end{proof}

With the same idea one can show that a supersolution to the problem \eqref{Main-eq-section}
that attains a minimum at some point $\bar x \in \Omega$ must be constant.

\begin{remark} In our proof of the strong maximum principle
we need to choose directions. When we deal with subsolutions  
we can choose the direction that is involved in the infimum and
hence any non-constant solution to an equation given in terms of a sum of fractional  
eigenvalues can not attain an interior maximum provided $\Lambda_1^s u$
appears in the operator. Analogously, when $\Lambda_N^s u$ appears in the operator
non-constant solutions can not have interior minima.
\end{remark}

\begin{corollary} \label{corol55}
For an exterior datum $g\gneqq 0$ we have that
the corresponding solution to \eqref{Main-eq-section} is strictly positive in $\Omega$, $u(x)>0$,
$x \in \Omega$.
\end{corollary}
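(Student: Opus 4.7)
The plan is to combine the comparison principle (Theorem~\ref{teo:cp}) with the strong maximum principle proved just above.

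First I would observe that $u\equiv 0$ is a viscosity solution of \eqref{Main-eq-section}, since both $\Lambda_1^s$ and $\Lambda_N^s$ annihilate constants. As $g\geq 0$ in $\mathbb{R}^N\setminus\Omega$, applying Theorem~\ref{teo:cp} with exterior data $g$ and $0$ gives $u\geq 0$ throughout $\mathbb{R}^N$.

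Next I would argue by contradiction: assume there is $x_0\in\Omega$ with $u(x_0)=0$. The operator $\Lambda_1^s+\Lambda_N^s$ is odd under $u\mapsto -u$, since changing sign of $u$ turns $\Theta_u^s$ into $-\Theta_u^s$ and therefore swaps the roles of $\min_{|z|=1}$ and $\max_{|z|=1}$, yielding $\Lambda_1^s(-u)+\Lambda_N^s(-u)=-(\Lambda_1^s u+\Lambda_N^s u)$. Hence $-u$ is a viscosity subsolution of \eqref{Main-eq-section} with exterior datum $-g$, and
\[
\sup_{\mathbb{R}^N\setminus\Omega}(-g)=-\inf_{\mathbb{R}^N\setminus\Omega} g\leq 0=(-u)(x_0).
\]
The strong maximum principle (in the form stated in Theorem~\ref{teo.compar.intro}) then forces $-u\equiv -g$ to be constant, and evaluation at $x_0$ pins this constant to $0$. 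Thus $g\equiv 0$ in $\mathbb{R}^N\setminus\Omega$, contradicting $g\gneqq 0$.

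The main (and really quite minor) obstacle is justifying the oddness of $\Lambda_1^s+\Lambda_N^s$ under $u\mapsto -u$; this is a direct consequence of the definitions. An equivalent route that avoids this symmetry argument is to repeat the proof of the strong maximum principle with all inequalities reversed, thereby obtaining a stand-alone strong minimum principle for supersolutions, and apply it directly to $u$ at the interior minimum $x_0$ to conclude $u\equiv g\equiv 0$ and reach the same contradiction.
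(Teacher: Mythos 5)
Your proposal is correct and follows essentially the paper's (implicit) argument: comparison with the zero solution gives $u\geq 0$, and an interior zero is then excluded by the strong maximum principle applied at an interior extremum, forcing $u\equiv g\equiv 0$ and contradicting $g\gneqq 0$. Your second route (the strong minimum principle for supersolutions) is precisely what the paper points to just before the corollary, and your first route via the oddness $\Lambda_1^s(-u)+\Lambda_N^s(-u)=-(\Lambda_1^s u+\Lambda_N^s u)$ is a harmless reformulation of the same idea.
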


Finally, we show a strong comparison principle, provided that the exterior
data verify that $g_1\geq g_2$ and there exists a point $\hat{x}$ in every line
that passes trough $\Omega$ such that $g_1 (\hat{x}) > g_2 (\hat{x})$.

\begin{theorem}[Strong comparison principle]
\label{teo.string}
Assume $g_1,g_2 \in C(\RR^N \setminus \Omega)$ with 
$g_1\geq g_2$ and
in every line
that passes trough $\Omega$ there exists a point $\hat{x} \in \mathbb{R}^N \setminus \Omega$  such that $g_1 (\hat{x}) > g_2 (\hat{x})$. Let $u_1$,
$u_2$ be solutions of our problem with boundary conditions $g_1$ and $g_2$ respectively. 
Then, $u_1 > u_2 $ in $\Omega$.
\end{theorem}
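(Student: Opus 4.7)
The plan is to argue by contradiction, mimicking the double-variable construction from the proof of Theorem \ref{teo:cp}. By the (ordinary) comparison principle we already have $u_1 \geq u_2$ in $\mathbb{R}^N$. Suppose, for contradiction, that $u_1(\bar x) = u_2(\bar x)$ at some $\bar x \in \Omega$. Picking $R$ with $\overline{\Omega}\subset B_R$, I introduce the penalized functional
\[
\Phi_\varepsilon(x,y) = u_2(x) - u_1(y) - \frac{|x-y|^2}{\varepsilon} - |x-\bar x|^2
\]
on $\overline{B_R}\times \overline{B_R}$. The extra quadratic term localizes the maximizers $(x_\varepsilon, y_\varepsilon)$ of $\Phi_\varepsilon$; since $\Phi_\varepsilon(\bar x,\bar x)=0$ while $u_2\le u_1$ globally, one obtains $(x_\varepsilon,y_\varepsilon)\to(\bar x,\bar x)$ and $|x_\varepsilon-y_\varepsilon|^2/\varepsilon\to 0$ as $\varepsilon\to 0^+$, so $x_\varepsilon\in\Omega$ for $\varepsilon$ small.

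I then apply the subsolution inequality for $u_2$ (with exterior data $g_2$) at $x_\varepsilon$ with test function $\phi_\varepsilon(x)=u_1(y_\varepsilon)+|x-y_\varepsilon|^2/\varepsilon+|x-\bar x|^2$, and the supersolution inequality for $u_1$ (with exterior data $g_1$) at $y_\varepsilon$ with test function $\varphi_\varepsilon(y)=u_2(x_\varepsilon)-|x_\varepsilon-y|^2/\varepsilon$. As in the proof of Theorem \ref{teo:cp}, I select almost-optimal directions $z_{\varepsilon,h}$ (approximating the infimum in the supersolution) and $\widetilde z_{\varepsilon,h}$ (approximating the supremum in the subsolution) as in \eqref{EE}, and split each $E_{z,\delta}$ into the near-diagonal term $I_1$ (resp. $J_1$), the middle-range term $I_2$ (resp. $J_2$), and the exterior term $I_3$ (resp. $J_3$). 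The terms $I_1-J_1$ vanish as $\delta\to 0$; the terms $I_2-J_2$ satisfy $\limsup(I_2-J_2)\le 0$, thanks to the maximum property of $\Phi_\varepsilon$, which yields the pointwise ordering $u_2(x_\varepsilon+w)-u_1(y_\varepsilon+w)\le u_2(x_\varepsilon)-u_1(y_\varepsilon)$ along common translations $w=tz$; and the decisive exterior term has the limit
\[
I_3-J_3\;\longrightarrow\;-\int_{L_{z_0}(\bar x)\cap (\mathbb{R}^N\setminus\Omega)} \frac{(g_1-g_2)(\bar x+tz_0)}{|t|^{1+2s}}\,dt,
\]
together with the analogous limit along $\widetilde z_0$. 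Observe that the $-M$ contribution that drove the contradiction in Theorem \ref{teo:cp} vanishes here because $u_2(x_\varepsilon)-u_1(y_\varepsilon)\to 0$; what survives is exactly the pointwise defect $-(g_1-g_2)$ integrated along the line outside $\Omega$.

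By hypothesis, each line through $\bar x$ meets a point $\hat x\in \mathbb{R}^N\setminus\Omega$ with $g_1(\hat x)>g_2(\hat x)$; continuity of $g_1$ and $g_2$ makes this strict inequality persist in a full neighborhood of $\hat x$ inside $\mathbb{R}^N\setminus\Omega$, so both of the above integrals are strictly positive and the two limit contributions are strictly negative. Subtracting the two viscosity inequalities \eqref{EE} and passing to the limits $\delta\to 0$, then $\varepsilon\to 0$, then $h\to 0$ gives $0\le(\text{strictly negative quantity})$, the desired contradiction. The main technical subtlety is the localization: the penalty $-|x-\bar x|^2$ is essential to force the double-variable maximizer to concentrate at an \emph{interior} touching point, so that the limiting directions $z_0,\widetilde z_0$ really define lines passing through $\Omega$ and the hypothesis on $g_1-g_2$ can be invoked; without the penalty, the maximizers could drift to a boundary or exterior coincidence point, where the hypothesis yields no useful information.
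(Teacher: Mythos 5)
Your strategy is essentially the paper's: argue by contradiction at an interior coincidence point, double variables, pick almost-optimal directions for the two extrema, split each $E_{z,\delta}$ into $I_1,I_2,I_3$, and extract the contradiction from the strict gap $g_1-g_2>0$ along the limiting lines (and you correctly note that the $-M$ term of Theorem \ref{teo:cp} disappears since $u_2(x_\varepsilon)-u_1(y_\varepsilon)\to 0$). The one genuine deviation is your localization device, and that is where there is a gap. You add the penalty $-|x-\bar x|^2$ and then claim that maximality of $\Phi_\varepsilon$ under common translations gives the pointwise ordering $u_2(x_\varepsilon+w)-u_1(y_\varepsilon+w)\le u_2(x_\varepsilon)-u_1(y_\varepsilon)$. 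It does not: with the penalty present the inequality reads $u_2(x_\varepsilon+w)-u_1(y_\varepsilon+w)\le u_2(x_\varepsilon)-u_1(y_\varepsilon)+2\langle x_\varepsilon-\bar x,\,w\rangle+|w|^2$. In a local, second-order argument such an error is harmless, but here it enters the \emph{far-field} comparison $I_2-J_2$: taking $w=tz$, the extra $t^2$ term integrated against $|t|^{-1-2s}$ over the common segment (of length comparable to $R$) contributes a fixed positive quantity of order $R^{2-2s}/(2-2s)$ which does not vanish in any of the limits $\delta\to0$, $\varepsilon\to0$, $h\to0$ (and the linear term needs a separate cancellation-by-oddness argument, since a crude bound gives $|x_\varepsilon-\bar x|\,\delta^{1-2s}$ with $\delta\to0$ taken first). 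Hence your claim $\limsup(I_2-J_2)\le 0$ is unjustified, and the final step collapses: the strictly negative exterior contribution $-\int_I (g_1-g_2)(\bar x+tz_0)\,|t|^{-1-2s}dt$ has no reason to dominate an $O(R^{2-2s})$ error of the wrong sign.

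The gap is fixable while keeping your scheme: penalize with $\alpha|x-\bar x|^2$, keep $\alpha>0$ fixed through the limits in $\delta,\varepsilon,h$ (the maximizers still concentrate at $\bar x$ for each fixed $\alpha$), so the spurious contribution is $O(\alpha)$, and send $\alpha\to0$ at the very end; alternatively, do what the paper does, namely drop the penalty altogether and run the doubling on $\overline{\Omega}_{-\eta}\times\overline{\Omega}_{-\eta}$ with $\eta=d(x_0,\partial\Omega)/2$, which forces the touching point to be interior without contaminating the nonlocal terms. Two further small points: the strictness of $g_1-g_2$ on a neighborhood of $\hat x$ must be combined with the fact that part of the line in $B_R\setminus\Omega$ actually sits in $I_2-J_2$ (through $u^g=g$ there) rather than in $I_3-J_3$, the same mild abuse the paper commits; and on $\partial\Omega$ one should invoke Theorem \ref{condicion-borde.77} to order the upper and lower extensions $u_2^{g_2}\le (u_1)_{g_1}$ there.
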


\begin{proof} 
We already know, thanks to the comparison principle, that $u_1 \geq u_2$. 
Arguing by contradiction, assume that there exists $x_0 \in \Omega$ such that $u_1(x_0) = u_2(x_0)$.

Set $$\eta = \frac{d(x_0, \partial \Omega)}{2},$$ and define 
\[\Omega_{-\eta} := \{x \in \Omega: d(x, \partial \Omega)>\eta\}.\]
Since $x_0 \in \Omega_{-\eta}$, we know that 
\[\inf \{(u_1-u_2)(x) : x \in \overline{\Omega}_{-\eta}\} = 0.\]

Next, we proceed to construct an auxiliary function that will yield a test function for $u_1$ from below and a test function for $u_2$ from above like the one used in the proof of Theorem \ref{teo:cp}. We aim to use the fact that $u_1$ is a viscosity supersolution and $u_2$ a viscosity subsolution. 

Let 
\[\psi_\varepsilon(x,y) = u_1(x) - u_2(y) + \frac{|x-y|^2}{\varepsilon}\]
and 
\[S_\varepsilon := \inf\{\psi_\varepsilon(x,y): (x,y) \in \overline{\Omega}_{-\eta} \times \overline{\Omega}_{-\eta}\}. \]
It is simple to observe, following similar steps to the ones in the proof of Theorem \ref{teo:cp}, that $S_{\varepsilon_2} \leq S_{\varepsilon_1}$ for $\varepsilon_1 \leq \varepsilon_2$ and $S_\varepsilon \leq 0$. Using the continuity of $\psi_\varepsilon$ we know that there exists $(x_\varepsilon, y_\varepsilon) \in \overline{\Omega}_{-\eta} \times \overline{\Omega}_{-\eta}$ such that $\psi_\varepsilon (x_\varepsilon, y_\varepsilon) = S_\varepsilon$. This implies
\[\lim_{\varepsilon \to 0^+} \psi_\varepsilon(x_\varepsilon, y_\varepsilon) = \lim_{\varepsilon \to 0^+} S_\varepsilon = \overline{S} \leq 0 . \]
Taking a subsequence, we may assume $x_\varepsilon$ and $y_\varepsilon$ converge to $\widetilde{x}$ and $\widetilde{y}$, and we know that $\widetilde{x} = \widetilde{y}$ thanks to 
\[\lim_{\varepsilon \to 0^+}\frac{|x_\varepsilon-y_\varepsilon|^2}{\varepsilon} = 0.\]
Hence, we get
\[0 \geq \overline{S} = \lim_{\varepsilon \to 0^+} \psi_\varepsilon(x_\varepsilon, y_\varepsilon) = \lim_{\varepsilon \to 0^+} \big( u(x_\varepsilon) - v(y_\varepsilon) \big) \geq 0.\]
This implies $x_\varepsilon$ and $y_\varepsilon$ converge to a minimum point of $u_1-u_2$, and 
since this point belongs to $\overline{\Omega}_{-\eta}$, it follows that it is inside $\Omega$. 
With no loss of generality we will assume $\widetilde{x} = x_0$. 

Thanks to the previous arguments, we have found test functions 
\begin{align*}
    \phi_\varepsilon(x)&\coloneqq 
    u_2 (y_\varepsilon)
    -\dfrac{|x-y_\varepsilon |^2}{\varepsilon},\\[6pt]
    \varphi_\varepsilon(y)&\coloneqq
    u_1(x_\varepsilon)
    +\dfrac{|x_\varepsilon-y |^2}{\varepsilon} 
\end{align*}
for $u_1$ and $u_2$ at the points $x_\varepsilon$ and $y_\varepsilon$ respectively. We now use the fact that $u_1$ is a viscosity supersolution and $u_2$ a subsolution so that
\[E_\delta (u_1, \phi_\varepsilon, x_\varepsilon) \geq 0\quad \text{and}  \quad E_\delta (u_2, \varphi_\varepsilon, y_\varepsilon) \leq 0. \]
Now we use the same strategy as in Theorem \ref{teo:cp} to get rid of the infimum and the supremum of the expressions. For each $h>0$ there exists $z_{\varepsilon,h}, \widetilde{z}_{\varepsilon,h} \in \mathbb{S}^{N-1} $ such that
\begin{align}
    \inf_{|z|=1}E_{z,\delta} (u_1, \phi_\varepsilon, x_\varepsilon) + h &\geq E_{z_{\varepsilon,h},\delta} (u_1, \phi_\varepsilon, x_\varepsilon), \\
    \mbox{and} \\
    \sup_{|z|=1}E_{z,\delta} (u_2, \varphi_\varepsilon, y_\varepsilon) - h &\leq E_{\widetilde{z}_{\varepsilon,h},\delta} (u_2, \varphi_\varepsilon, y_\varepsilon).
\end{align}
Then,
\begin{equation} \label{EE.22}
    \begin{array}{l}
    \displaystyle    E_{z_{\varepsilon,h},\delta} (u_1, \phi_\varepsilon, x_\varepsilon) + E_{\widetilde{z}_{\varepsilon,h},\delta} (u_1, \phi_\varepsilon, x_\varepsilon) \leq h
    \\[10pt]
    \displaystyle  
    \qquad \mbox{ and } 
     \\[10pt]
    \displaystyle    E_{z_{\varepsilon,h},\delta} (u_2, \varphi_\varepsilon, y_\varepsilon) + E_{\widetilde{z}_{\varepsilon,h},\delta} (u_2, \varphi_\varepsilon, y_\varepsilon) \leq -h,
    \end{array}
    \end{equation}
Here the contradiction will follow from the fact that, 
\[\lim_{\varepsilon, h \rightarrow 0^+} \Big( g_1(x_\varepsilon+tz_{\varepsilon,h})-g_2(y_\varepsilon+tz_{\varepsilon,h}) \Big)= g_1(x_0 + tz_0)-g_2(x_0 + tz_0).\]
By hypothesis, for some $t_0$, $(x_0+t_0z_0) \not \in \Omega$ we get 
\[g_1(x_0 + tz_0)-g_2(x_0 + tz_0)>0.\]
Now, using the continuity of the exterior data $g_1$ and $g_2$, the strict inequality holds on some interval $I$. Doing exactly the same computations on the other set of directions, after taking limits, we find the desired contradiction
\[0<\int_I \frac{g_1(x_0 + tz_0)-g_2(x_0 + tz_0)}{|t|^{1+2s}}\, dt + \int_{\widetilde{I}} \frac{g_1(x_0 + t\widetilde{z}_0)-g_2(x_0 + t\widetilde{z}_0)}{|t|^{1+2s}}\, dt \leq 0.\]
Therefore, we conclude that $u_1 > u_2$ in $\Omega$. 
\end{proof}

\begin{remark}
As a simple example where Theorem \ref{teo.string} can be applied, one can take functions $g_1$, $g_2$ that verify 
$g_1\geq g_2$ with $g_1(x) > g_2(x)$ for $x$ inside an annulus $r < |x| <R$
and a domain $\Omega \subset B_r (0)$.
\end{remark}

\begin{remark}
To obtain that $u_1 > u_2$
we need to assume that in every line that passes trough $\Omega$ 
there exists a point $\hat{x} \in \mathbb{R}^N \setminus \Omega$ 
such that $g_1 (\hat{x}) > g_2 (\hat{x})$. 
This condition is necessary, in fact, in the next section
we will construct data with 
$g_1\gneqq  g_2$  in $ \mathbb{R}^N \setminus \Omega$
and such that the corresponding solutions coincide, 
$u_1 \equiv  u_2$ in $\Omega$.
\end{remark}

\section{The trace fractional Laplacian is nonlinear} \label{sect-nonlinear}

Our goal in this section is to show that the trace fractional Laplacian
and the mid-range fractional Laplacian
are nonlinear operators. Since in $\mathbb{R}^2$
both operators coincide up to a constant we perform
the argument for $N=2$.

\begin{theorem}
The problems \eqref{Main-eq-intro} and \eqref{Main-eq-section.iiii} are nonlinear problems. 
There exist
         data $g_1$, $g_2$ such that
         \begin{equation} \label{compar.7799}
        g_1\gneqq  g_2 \mbox{ in } \mathbb{R}^N \setminus \Omega,
       \mbox{ but } 
u_1 \equiv  u_2  \mbox{ in }\Omega. 
                \end{equation}
\end{theorem}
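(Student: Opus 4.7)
Since the mid-range and trace fractional Laplacians coincide up to a constant when $N=2$, the plan is to construct an explicit two-dimensional example and prove the claim simultaneously for \eqref{Main-eq-intro} and \eqref{Main-eq-section.iiii}. Fix a bounded $C^2$ domain $\Omega\subset\mathbb{R}^2$, a bounded continuous $g_2$, and let $u$ be the unique viscosity solution from Theorem~\ref{EyU.77.99}. Write
\[
\Theta(x,z)\;:=\;\int_{\mathbb{R}}\frac{u^{g_2}(x+tz)-u(x)}{|t|^{1+2s}}\,dt,\qquad x\in\Omega,\;z\in\mathbb{S}^1.
\]
The idea is to perturb $g_2$ upward on a small open set $R\subset\mathbb{R}^2\setminus\overline{\Omega}$ that is \emph{disjoint from the lines in the maximizing and minimizing directions of} $\Theta(x,\cdot)$, so that both $\min_z\Theta$ and $\max_z\Theta$ are left unchanged at every interior point; uniqueness will then force the perturbed problem to have the same solution in $\Omega$.

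First I would choose $\Omega$ and $g_2$ so that the maximizer $z_{\max}(x)$ and the minimizer $z_{\min}(x)$ of $\Theta(x,\cdot)$ can be tracked continuously in $x\in\overline{\Omega}$ and are pushed into narrow angular cones. A natural candidate is a strongly non-convex domain (a dumbbell, a slotted or bitten disk, an ellipse of large eccentricity) paired with an asymmetric $g_2$ whose support is concentrated away from certain directions, so that the non-convexity of $\Omega$ creates ``visibility shadows'' in the exterior. Let
\[
\mathcal{L}\;:=\;\bigcup_{x\in\overline{\Omega}}\bigl(\{x+tz_{\max}(x):t\in\mathbb{R}\}\cup\{x+tz_{\min}(x):t\in\mathbb{R}\}\bigr)
\]
be the set of critical lines. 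For the chosen configuration, $\mathcal{L}\cap(\mathbb{R}^2\setminus\overline{\Omega})$ is a proper closed subset, allowing me to pick a nonempty open ball $R$ in its complement. Continuity of $\Theta$ and compactness of $\overline{\Omega}\times\mathbb{S}^1$ then yield a uniform spectral gap: there exists $\delta>0$ such that for every $x\in\overline{\Omega}$ and every $z\in\mathbb{S}^1$ whose line from $x$ meets $R$,
\[
\Theta(x,z_{\max}(x))-\Theta(x,z)\;\geq\;\delta\qquad\text{and}\qquad \Theta(x,z)-\Theta(x,z_{\min}(x))\;\geq\;\delta.
\]

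Now let $\phi\in C_c^\infty(R)$ with $\phi\geq 0$ and $\phi\not\equiv 0$, and define $g_1:=g_2+\varepsilon\phi$, so $g_1\gneqq g_2$. Extending $u$ by $g_1$ outside and denoting the corresponding integrand by $\Theta_1$, one obtains
\[
\Theta_1(x,z)\;=\;\Theta(x,z)\;+\;\varepsilon\int_{\{t:\,x+tz\in R\}}\frac{\phi(x+tz)}{|t|^{1+2s}}\,dt,
\]
and the perturbation term is nonnegative, vanishes at $z\in\{z_{\max}(x),z_{\min}(x)\}$ (because those lines avoid $R$), and is bounded by $C\varepsilon$ uniformly in $(x,z)$. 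Choosing $\varepsilon<\delta/C$ and using the gap, one concludes that $\max_z\Theta_1(x,z)=\Theta(x,z_{\max}(x))$ and $\min_z\Theta_1(x,z)=\Theta(x,z_{\min}(x))$ for every $x\in\Omega$, so $\min_z\Theta_1+\max_z\Theta_1=0$ in $\Omega$. Hence $u$ is also a viscosity solution with exterior datum $g_1$, and by the uniqueness part of Theorem~\ref{EyU.77.99} one has $u_1\equiv u\equiv u_2$ in $\Omega$. Note that a linear operator enjoying the strong maximum principle could not permit this, since $g_1\gneqq g_2$ outside would force $u_1>u_2$ in $\Omega$; thus $(-\Delta)^s_{mid}$ and $(-\Delta)^s_{tr}$ cannot be linear.

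The main obstacle is the first step, namely producing an explicit configuration in which the union of critical lines $\mathcal{L}$ does not cover the entire exterior of $\Omega$. Symmetric, radial, or convex setups fail because every exterior point then lies on some critical line; the construction therefore needs to exploit a pronounced non-convexity or asymmetry that geometrically confines $z_{\max}(x)$ and $z_{\min}(x)$ to proper sub-cones, uniformly in $x$. Once such a configuration and the ensuing spectral gap are secured, the perturbation argument and the appeal to uniqueness are routine.
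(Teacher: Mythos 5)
Your overall strategy is the same as the paper's: perturb the exterior datum on a region that the extremal directions never ``see,'' observe that the max and min over directions are unchanged so the same function remains a solution, and then invoke uniqueness plus the strong maximum principle to conclude nonlinearity. But the proposal has a genuine gap, and you name it yourself: you never produce a configuration $(\Omega,g_2,u)$ for which the union of critical lines $\mathcal{L}$ misses an open exterior set, and everything else (the continuity of $z_{\max}(x)$, $z_{\min}(x)$, the uniform spectral gap $\delta$, even the pointwise definition of $\Theta(x,z)$) hinges on that missing construction. Note also that for a generic viscosity solution $u$ the quantity $\Theta(x,z)$ need not be classically defined or continuous --- the solution is only known to be continuous and bounded, so the integral near $t=0$ may not converge and the ``evaluate the operator pointwise and track the maximizer'' step is not justified without extra regularity.

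The paper closes exactly this gap by an explicit separable example, and in a way that contradicts your heuristic that convex or symmetric setups must fail. Take $a$ a (smooth, strong) solution of the one-dimensional problem $(-\Delta)^s a=1$ on $(-2,2)$ and set $u(x,y)=a(x)-a(y)$ in $\mathbb{R}^2$. A change of variables gives, for any unit vector $v=(v_x,v_y)$,
\begin{equation}
\int_{\mathbb{R}}\frac{u\bigl((x,y)+tv\bigr)-u(x,y)}{|t|^{1+2s}}\,dt \;=\; \bigl(|v_x|^{2s}-|v_y|^{2s}\bigr)\cdot(\pm 1),
\end{equation}
so at \emph{every} point the maximizing and minimizing directions are exactly the coordinate axes, the extreme values are $\pm 1$, and their sum vanishes: $u$ solves the equation in $B_1(0)$ with exterior datum $g_2=u$. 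The critical lines are then just the two axis-parallel strips $\{|x|<1\}\cup\{|y|<1\}$, so any small ball centered at a far-away diagonal point $(\hat x,\hat x)$ lies outside $\mathcal{L}$ even though $B_1(0)$ is convex; adding a small nonnegative bump $\varepsilon f$ there changes the directional integrals only for roughly diagonal directions, by at most about $2\varepsilon<1$, so neither the min nor the max moves and $u$ is still the solution for $g_1=g_2+\varepsilon f$. If you replace your ``non-convex domain with visibility shadows'' paragraph by this explicit construction (which also supplies the smoothness needed to evaluate the operator classically and the quantitative gap $1$ versus $2\varepsilon$), the rest of your argument --- uniqueness and the contradiction with the strong maximum principle for a putative linear operator --- goes through as in the paper.
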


\begin{proof}
    We will argue in $\mathbb{R}^2$ and denote a point as $(x,y)$. To begin, we consider $a(x)$ a 
    viscosity solution in $\mathbb{R}$ to 
    \[(-\Delta)^{s}a(x) = - p.v. \int_{\mathbb{R}} \frac{a(x+t)-a(x)}{|t|^{1+2s}}\, dt = 1,\]
    for $x\in (-2,2)$.
    This viscosity solution is going to be a strong solution in $(-1,1)$, and in fact we can assume it is at least $C^2$, so we can drop the principal value.

    Define in $\mathbb{R}^2$ the function 
    \[u(x,y) = a(x) - a(y).\]
    Observe that, for a unitary vector $v=(v_x,v_y)$, we have
    \[\begin{aligned}
       & \int_{\RR} \frac{u(x+tv_x, y+tv_y)-u(x,y)}{|t|^{1+2s}}\, dt \\
       &= \int_{\RR} \frac{a(x+tv_x)-a(x)}{|t|^{1+2s}}\, dt - \int_{\RR} \frac{a(y+tv_y)-a(y)}{|t|^{1+2s}}\, dt \\
        & = |v_x|^{2s} \int_{\RR} \frac{a(x+\widetilde{t})-a(x)}{|\widetilde{t}|^{1+2s}}\, d\widetilde{t} - |v_y|^{2s} \int_{\RR} \frac{a(y+\widetilde{t})-a(y)}{|\widetilde{t}|^{1+2s}}\, d\widetilde{t} \\
        & = |v_x|^{2s} - |v_y|^{2s}.
    \end{aligned}\]
We can easily check that the supremum and the infimum of this expression are achieved at $v=(1,0)$ and $v=(0,1)$ respectively. Thus, this particular $u(x,y)$ satisfies our equation in $\Omega = (-1,1)\times (-1,1)$. We have, 
\[
\begin{array}{l}
\displaystyle 
(-\Delta)_{tr}^s u(x,y) = -\inf_{z\in \mathbb{S}^{1}} \int_{\mathbb{R}} 
\frac{u(x+tz)-u(x)}{|t|^{1+2s}}\, dt  \\[10pt]
\qquad \qquad  \qquad \qquad \displaystyle  -\sup_{z\in \mathbb{S}^{1}} \int_{\mathbb{R}}\frac{u(x+tz)-u(x)}{|t|^{1+2s}}\, dt 
\\[10pt] \qquad \qquad \displaystyle
= -1+1 = 0.
\end{array} 
\]
Consider the Dirichlet problem 
\[\left\{\begin{aligned}
    (-\Delta)_{tr}^s w(x,y) &= 0, \quad &&\text{ for } (x,y) \in B_1(0) \\
    w(x,y) &= u(x,y), \quad &&\text{ for } (x,y) \in \RR^2 \setminus B_1(0).
\end{aligned}\right.\]
This problem has the unique solution that we already constructed, $w(x,y) = u(x,y)$
(uniqueness comes from the comparison principle). 

Now, if we adequately add a little perturbation far from the origin 
supported close to a point $(\hat x, \hat x)$ on the diagonal to obtain a new exterior datum, we will 
get that for this different exterior datum we get the same solution. 

Let $f(x,y)$ be a radially non-increasing nonnegative and nontrivial cut-off function such that $f(x,y) = 1$ for $(x,y) \in B_r(\hat x, \hat y)$ and $f(x,y) = 0$ for $(x,y) \in \RR^2 \setminus B_r(\hat x, \hat x)$ for some far away point $(\hat x, \hat x)$ in a diagonal and some small $r$. 
Now, let us take $\widetilde{g} (x,y) = u(x,y) + \varepsilon f(x,y)$ as exterior datum.
Then, using calculations similar to those performed before we get that
\[
\begin{array}{l}
\displaystyle 
\int_{\RR} \frac{\widetilde{g} (x+tv_x, y+tv_y) - u(x,y)}{|t|^{1+2s}}\, dt  
\displaystyle = |v_x|^{2s} - |v_y|^{2s} + \varepsilon \chi_{\{|v_x| \simeq |v_y|\} }.
\end{array} \]
The $\varepsilon>0$, which we can assume to be small, is the effect of the perturbation $f$. Both the supremum and infimum are still going to be achieved for $v=(1,0)$ and $v=(0,1)$. This is quite immediate for the infimum, since $\varepsilon$ is positive. 
The supremum does not change because, if $|v_x| \sim |v_y|$, then in a diagonal direction the above expression will be something similar to $2\varepsilon$, which can be chosen smaller than $1$ and thus, $z=(1,0)$ achieves a greater value. 
Therefore, even after altering the exterior datum by adding a 
nonegative and nontrivial perturbation, $u(x,y)$ is still a solution
inside $B_1(0)$. 

Now, let us substract the two functions in order to obtain a nonnegative
and nontrivial exterior datum (the perturbation). If our operator was linear, from our previous results
we obtain that the solution with this exterior datum is strictly positive inside $B_1(0)$ 
(we use the strong maximum principle, Corollary \ref{corol55}).
This proves the
nonlinearity of the problem, since the difference of two solutions that coincide in 
$B_1(0)$ is not the solution for the difference of the exterior data.
\end{proof}

        \section{Limit as $s\nearrow 1$} \label{sect-lim-s}
        
        In this section, for a fixed $C^2$ domain $\Omega$ and a fixed 
        continuous and bounded exterior datum $g$ we study the limit as $s \nearrow 1$ of the solutions $u_s$
        (we make explicit the dependence of the solution in $s$ along this section). 
        
        Recall that the fractional eigenvalues are given by
        \begin{equation}\label{eq:eigenvalues.sect}
    \displaystyle \Lambda_k^s u (x) =
    \max_{{dim} (S) = N-k+1}
       \min_{ \substack{z \in S, \\ |z| =1}}
    \left\{
    c(s) \, p.v. \int_{\mathbb{R}} \frac{u(x+tz)-u(x)}{|t|^{1+2s}}\, dt
\right\}
\end{equation}
Here we will use the explicit constant given by
         \begin{equation} \label{cte.99}
         c(s) = 
         \frac{2^{2s}s\Gamma  (s+1/2)}{\pi^{1/2} \Gamma  (1-s)},
         \end{equation}
         however, as we have mentioned in the introduction, any $c(s)$
         with $c(s) \sim (1-s)$ will give the same limit. 
        
Next, our goal is to show that $u_s$, the unique solution to
  \begin{equation} \label{Main-eq-lim} 
        \left\{
             \begin{array}{ll}
                \displaystyle(- \Delta)^s_{tr} (x) = - \sum_{i=1}^N \Lambda_i^s u (x) =0, \qquad & x \in \Omega, \\[6pt]
                         u (x) = g (x), \qquad & x \in \mathbb{R}^N\setminus \Omega,
             \end{array}
             \right.
         \end{equation}
         converges uniformly as $s\nearrow 1$ to the unique solution to 
                   \begin{equation} \label{Dirich-local-Lapla.sect} 
        \left\{
             \begin{array}{ll}
                \displaystyle \Delta u(x)  = 0 \qquad & x \in \Omega, \\[6pt]
                         u (x) = g (x) \qquad & x \in \partial \Omega.
             \end{array}
             \right.
         \end{equation}
         
         In addition, with the same arguments, we obtain that 
         when $u_s$ is the unique solution to 
          \begin{equation} \label{Main-eq-lim.iiii} 
        \left\{
             \begin{array}{ll}
                \displaystyle (-\Delta)^s_{mid} (x) =-\frac12 \Lambda_1^s u (x) -\frac12 \Lambda_N^s u (x) =0, \qquad & x \in \Omega, \\[6pt]
                         u (x) = g (x), \qquad & x \in \mathbb{R}^N\setminus \Omega,
             \end{array}
             \right.
         \end{equation}
         it holds that
         $$
         \lim_{s \nearrow 1} u_s = u
         $$
         in $C (\overline{\Omega})$ with the limit $u$ is given by the unique solution to
         the local problem
          \begin{equation} \label{Dirich-local-Lapla.N=2.sect} 
        \left\{
             \begin{array}{ll}
                \displaystyle \lambda_1 (D^2 u)(x) + \lambda_N (D^2 u) (x) = 0 \qquad & x \in \Omega, \\[6pt]
                         u (x) = g (x) \qquad & x \in \partial \Omega.
             \end{array}
             \right.
         \end{equation}

 First, we show that the half-relaxed upper limit of the $u_s$ is a subsolution 
 to the limit problem. 

\begin{theorem}
Let $u_s$ be viscosity subsolutions to our Dirichlet problem 
for the trace fractional
Laplacian
\eqref{Main-eq-lim}, and define $\overline{u}$ as the half-relaxed upper limit,
\[\overline{u}(x):= \limsup_{\subalign{s&\to 1^-\\y&\to x}}u_s(y).\] 
Then $\overline{u}$ is a viscosity subsolution to the Dirichlet problem
for the classical local Laplacian \eqref{Dirich-local-Lapla.sect}.
\end{theorem}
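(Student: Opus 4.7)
The plan is to run the classical half-relaxed limit argument of viscosity theory, adapted to our nonlocal setting. Fix a test function $\phi\in C^2(\R^N)$ touching $\overline{u}$ strictly from above at some $x_0\in\Omega$ (a standard quadratic perturbation reduces to this case). By definition of the half-relaxed upper limit, there exist sequences $s_n\nearrow 1$ and $x_n\to x_0$ so that, for some $\delta>0$ with $B_\delta(x_n)\subset\Omega$ for all large $n$, the point $x_n$ is a local maximum of $u_{s_n}^g-\phi$ and $u_{s_n}(x_n)\to\overline{u}(x_0)=\phi(x_0)$. The viscosity subsolution inequality for $u_{s_n}$, applied with test $\phi$, then reads
\[
\sum_{i=1}^N \max_{\dim(S)=N-i+1}\min_{\substack{z\in S\\|z|=1}} E_{z,\delta}(u_{s_n}^g,\phi,x_n) \;\geq\; 0,
\]
where $E_{z,\delta}(u_{s_n}^g,\phi,x_n) = c(s_n)\bigl[I^1_{z,\delta}(\phi,x_n)+I^2_{z,\delta}(u_{s_n}^g,x_n)\bigr]$.

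The core step is to show that the directional integrand $E_{z,\delta}(u_{s_n}^g,\phi,x_n)$ converges to $\langle D^2\phi(x_0)z,z\rangle$ uniformly in $z\in\mathbb{S}^{N-1}$. A second-order Taylor expansion of $\phi$ at $x_n$ (whose first-order piece vanishes by oddness of the kernel) yields
\[
c(s_n)\,I^1_{z,\delta}(\phi,x_n) \;=\; \frac{c(s_n)\,\delta^{2-2s_n}}{2(1-s_n)}\,\langle D^2\phi(x_n)z,z\rangle \;+\; O\!\bigl(c(s_n)\,\delta^{4-2s_n}\,\|D^3\phi\|_{L^\infty}\bigr).
\]
With the explicit constant from \eqref{cte.99} one has $c(s)\sim 2(1-s)$ as $s\to 1^-$, so the prefactor tends to $1$ and the remainder vanishes. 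For the tail, the bound $\|u_{s_n}^g\|_\infty\leq\|g\|_\infty$ (obtained by comparing $u_{s_n}$ with the constants $\pm\|g\|_\infty$) gives
\[
c(s_n)\,\bigl|I^2_{z,\delta}(u_{s_n}^g,x_n)\bigr|\;\leq\; \frac{2\,c(s_n)\,\|g\|_\infty}{s_n\,\delta^{2s_n}}\;\longrightarrow\;0,
\]
again because $c(s_n)\to 0$. Both estimates are independent of $z$, so the convergence is uniform on the sphere.

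The uniform convergence in $z$ lets one commute the limit with the nested max-min: by the Courant-Hilbert characterization \eqref{eigen-matrix}, $\Lambda_i^{s_n}\phi(x_n)\to\lambda_i(D^2\phi(x_0))$ for every $i$, and summing produces $\Delta\phi(x_0)=\sum_{i=1}^N\lambda_i(D^2\phi(x_0))\geq 0$, which is exactly the viscosity subsolution inequality for the classical Laplacian at $x_0$. For the boundary part $\overline{u}\leq g$ on $\partial\Omega$, the distance-based barriers from the proof of Theorem \ref{condicion-borde.77} can be built with constants that remain controlled uniformly for $s$ near $1$; this produces $u_{s_n}(x)\leq g(x)+o(1)$ for $x$ near $\partial\Omega$ and passes to the limit. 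The main obstacle is precisely this uniformity in $z$ of the Taylor and tail estimates, needed in order to interchange $\lim_{n}$ with the nested $\max_S\min_z$; fortunately both bounds depend on $z$ only through $|z|=1$, which makes the interchange legitimate.
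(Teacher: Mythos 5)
Your argument is correct and follows the same skeleton as the paper's proof: half-relaxed limit machinery with maximum points $x_{s_n}\to \bar x$, the splitting of $E_{z,\delta}$ into the near part $I^1$ (Taylor expansion of $\phi$) and the tail $I^2$ (killed by $c(s)\sim 2(1-s)\to 0$), and the boundary inequality via the barriers of Theorem \ref{condicion-borde.77}. Where you genuinely diverge is in how the limit is passed through the directional extrema: the paper fixes near-optimal directions $z_k$ (within $1/k$ of the $\inf$ resp.\ $\sup$), extracts a convergent subsequence $z_k\to\hat z$ on the compact sphere, identifies the limit along $\hat z$, and then shows a posteriori, via the inequality \eqref{pepe} against an arbitrary fixed $z$, that $\langle D^2\phi(\bar x)\hat z,\hat z\rangle$ realizes the corresponding extremum of the limiting quadratic form. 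You instead observe that the Taylor and tail estimates depend on $z$ only through $|z|=1$, so $c(s_n)E_{z,\delta}(u^g_{s_n},\phi,x_n)\to\langle D^2\phi(x_0)z,z\rangle$ uniformly on $\mathbb{S}^{N-1}$, and uniform convergence commutes with the nested $\max_S\min_z$, giving $\Lambda_i$-type quantities converging to $\lambda_i(D^2\phi(x_0))$ by Courant--Fischer. This is a legitimate and in fact cleaner route, especially for general $N$, where the paper only sketches the extension beyond the two extreme eigenvalues. Two small caveats: with $C^2$ test functions there is no $D^3\phi$, so the remainder should be estimated by the modulus of continuity of $D^2\phi$, producing an extra $O(\omega(\delta))$ that forces you to send $\delta\to 0$ after $s\to 1$ (or reduce, as usual, to smooth test functions); and the two-sided bound $\|u^g_{s_n}\|_\infty\le\|g\|_\infty$ cannot come from comparison alone when the $u_s$ are merely subsolutions (comparison gives only the upper bound), though the paper glosses the same point and it is harmless in the intended application to solutions.
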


\begin{proof}
First, we notice that $u_s$ are uniformly bounded in $s$.
This fact can be easily obtained using the comparison principle 
and that $\overline{w} = \|g\|_\infty$ 
is a supersolution to \eqref{Main-eq-lim}.
Therefore, the half-relaxed upper limit,
\[\overline{u}(x):= \limsup_{\subalign{s&\to 1^-\\y&\to x}}u_s(y)\]
is well-defined and bounded. 
By definition, $\overline{u}$ is an upper semicontinuous function. 

Again, to simplify the notation, we will prove the result
in $\mathbb{R}^2$, that is, we take $N=2$, since in this case we have only two eigenvalues
(one is given by an infimum among directions and the other by a supremum). 
At the end of the proof we will add a few lines on how to treat the general case.

Choose $\bar x \in \Omega$ and $\phi$ a test function such that $\overline{u} - \phi$ 
attains a maximum at $\bar x$. We can assume that $(\phi - \overline{u})(\bar x) = 0$ and that $(\bar u - \phi)(x) > 0$ for $x \neq \bar x$. Since $u_s$ is upper semicontinuous by definition, $\psi_s=u_s - \phi$ reaches a maximum point at $x_s\in \overline{\Omega}$. By the definition of 
half-relaxed limit, we have
\[\overline{\psi}(x) \leq \sup \left\{\psi_s(y): 0<1-s < \frac{1}{k}, y \in B_{\frac{1}{k}}(x)\setminus \{x\}  \right\}, \quad \forall k \in \mathbb{N}\]
Choose some $k$. For this $k$, we can find $s_k,y_k$ such that 
\[
\begin{array}{l}
\displaystyle
\overline{\psi}(x) \leq \sup \left\{\psi_s(y): 0<1-s < \frac{1}{k}, y \in B_{\frac{1}{k}}(x)\setminus \{x\}  \right\} 
\\[10pt]
\qquad \displaystyle \leq \psi_{s_k}(y_k) + \frac{1}{k} \leq \psi_{s_k}(x_{s_k}) + \frac{1}{k} 
\end{array}
\]
We observe $s_k \nearrow 1$ when $k\to \infty$. By extracting a subsequence, we can assume $x_{s_k}\to x_0$ for some $x_0\in \overline{\Omega}$. Then,
\[\overline{\psi} (x) \leq \limsup_k \left( \psi_{s_k}(x_{s_k}) + \frac{1}{k} \right) = \overline{\psi}(x_0).  \]
Since the only maximum point of $\overline{\psi}$ was $\bar x$, we deduce $x_0 = \bar x$, and thus 
\[\lim_k x_{s_k} = \bar x\]
that is, the sequence of maximum points converge to the maximum of the half-relaxed limit. Now, we can use $\phi$ as a test function at $x_{s_k}$ for the subsolutions $u_{s_k}$. Let us choose $z_{k}, \widetilde{z}_{k}$ unitary vectors such that 
\begin{align}\label{eq:infsuperror}
    \inf_{z\in \mathbb{S}^{N-1}}E^s_{z,\delta} (u^g_s, \phi, x_s) &\geq E^s_{z_{k},\delta} (u^g_{s_{k}}, \phi, x_{s_{k}}) - \frac{1}{k} \\
    & \mbox{and}
    \\
    \sup_{z\in \mathbb{S}^{N-1}}E^s_{z,\delta} (u^g_s, \phi, x_s) &\leq E^s_{\widetilde{z}_{k},\delta} (u^g_{s_{k}}, \phi, x_{s_{k}}) + \frac{1}{k}
\end{align}
for some $0<\delta< \frac{\dist(\bar x, \partial \Omega)}{2} < \dist(x_s, \partial \Omega)$, a condition that we can assume with no loss of generality. 
Here $E^s_{z,\delta} (\cdot, \phi, x)$
is given as in Section \ref{sect-compar} (without multiplying by $c(s)$ the corresponding integrals). 
Notice that here we 
write $E^s_{z,\delta} (\cdot, \phi, x)$ to make explicit that the operator depends on $s$. 

Now, as before, we write
\[ E^s_{z_{k},\delta} (u^g_s, \phi, x_{s_k}) = I^{1,s}_{z_{k}, \delta}(\phi, x_{s_k})+I^{2,s}_{z_{k}, \delta}(u_s, x_{s_k})\]
with
\[
\begin{array}{l}
    \displaystyle 
        I^{1,s}_{z_k, \delta}(\phi, x_{s_k})
        = \int_{-\delta}^\delta \frac{\phi (x_{s_k}+t z_{k})-\phi(x_{s_k})}{|t|^{1+2s}}dt,\\[15pt]
    \displaystyle  
    \qquad \qquad \mbox{ and}
    \\[15pt]
    \displaystyle  
        I^{2,s}_{z_k, \delta}(u, x_{s_k}) = \int_{\RR \setminus (-\delta,\delta)} 
            \frac{u^g_s( x_{s_k}+t z_{k})-u_s(x_{s_k})}{|t|^{1+2s}}dt.
\end{array}
\]

It is not difficult to check that
\[|I^{2,s}_{z_k, \delta}(u, x_{s_k})| \leq C \int_{\RR \setminus (-\delta,\delta)} 
\frac{1}{|t|^{1+2s}}dt = \frac{C}{s\delta^{2s}} ,\]
which implies, using that $c(s) \sim (1-s)$ as $s \nearrow 1$,
\[c(s)|I^{2,s}_{z_k, \delta}(u, x_{s_k})| \to 0, \qquad \text{as} \quad s\to 1.\]
From this previous estimate it is clear we must take limits first in $s$, and then in $\delta$. 

For the first integral, using a second order Taylor expansion of $\phi(x_{x_k}+tz_{k,\varepsilon})$ around $t=0$, we get 
\[I^{1,s}_{z_k, \delta}(\phi, x_{s_k})
= \frac{\delta^{2-2s}}{2(1-s)} \Big( \langle D^2\phi(x_{s_k}) z_{k}, z_{k} \rangle + o_\delta(1)\Big) \]
Hence, using the precise expression for $c(s)$ (that implies $c(s) \sim (1-s)$ as $s \nearrow 1$),
taking a subsequence such that $z_{k} \to \hat z$ for some $\hat z$, and using continuity of $\langle D^2\phi(x) z, z \rangle$ in both $x$ and $z$, we obtain
\[\lim_{\delta \to 0^+}\lim_{s\to 1^-}c(s)I^{1,s}_{z_k, \delta}(\phi, x_{s_k}) = \langle D^2\phi(\bar x) \hat z, \hat z \rangle. \]

Therefore, collecting the previous results, we obtain that
for any convergent sequence $z_{k} \to \hat z$, it holds that
 \[\lim_{\delta \to 0^+}\lim_{s\to 1^-}c(s)E^s_{z_{k},\delta} (u^g_s, \phi, x_{s_k}) = \langle D^2\phi(\bar x) \hat z, \hat z \rangle. \]

We would like to check that 
\[\langle D^2\phi(\bar x) \hat z, \hat z \rangle = \inf_{z \in \mathbb{S}^{N-1}} \langle D^2\phi(\bar x) z, z \rangle. \]
That is, 
\[ \langle D^2\phi(\bar x) \hat z, \hat z \rangle \leq \langle D^2\phi(\bar x) z, z \rangle, \quad \forall z \in \mathbb{S}^{N-1} . \]
We can get this last inequality taking limits in 
\begin{equation} \label{pepe}
E^s_{z,\delta} (u^g_s, \phi, x_s) \geq \inf_{z\in \mathbb{S}^{N-1}}E^s_{z,\delta} (u^g_s, \phi, x_s) \geq E_{z_{k},\delta} (u^g_s, \phi, x_s) - \frac{1}{k}.
 \end{equation}
 In fact, if we take the limit as $s \nearrow 1$ ($k \rightarrow \infty$) and as $\delta \searrow 0$ we have
  \[\lim_{\delta \to 0^+}\lim_{s\to 1^-}c(s)E^s_{z_{k},\delta} (u^g_s, \phi, x_{s_k}) = \langle D^2\phi(\bar x) \hat z, \hat z \rangle. \]
  and
   \[\lim_{\delta \to 0^+}\lim_{s\to 1^-}c(s)E^s_{z,\delta} (u^g_s, \phi, x_{s_k}) = \langle D^2\phi(\bar x) z, z \rangle. \]
   Then, from \eqref{pepe}, we obtain
   $$
   \langle D^2\phi(\bar x) \hat z, \hat z \rangle \leq \langle D^2\phi(\bar x) z, z \rangle,
   $$
as we wanted to show. 

For the supremum the proof is exactly the same, 
and hence we conclude that
$$
\begin{array}{l}
\displaystyle 
 \lim_{\delta \to 0^+}\lim_{s\to 1^-} c(s)
\Big\{ \inf_{z\in \mathbb{S}^{N-1}}E^s_{z,\delta} (u^g_s, \phi, x_s) 
+ \sup_{z\in \mathbb{S}^{N-1}}E^s_{z,\delta} (u^g_s, \phi, x_s)\Big\} \\[10pt]
\displaystyle \qquad =
 \inf_{z \in \mathbb{S}^{N-1}} \langle D^2\phi(\bar x) z, z \rangle
+ \sup_{z \in \mathbb{S}^{N-1}} \langle D^2\phi(\bar x) z, z \rangle
\\[10pt]
\displaystyle \qquad =\lambda_1 (D^2 \phi) (\bar x) + \lambda_N (D^2 \phi) (\bar x).
\end{array}
$$
As $u_s$ is a subsolution, we have 
$$
0\leq \Big\{ \inf_{z\in \mathbb{S}^{N-1}}E^s_{z,\delta} (u^g_s, \phi, x_s) 
+ \sup_{z\in \mathbb{S}^{N-1}}E^s_{z,\delta} (u^g_s, \phi, x_s)\Big\}
$$
and then in the limit we get that 
$$
0\leq \lambda_1 (D^2 \phi) (\bar x) + \lambda_N (D^2 \phi) (\bar x)
$$
showing that $\overline{u}$ is a viscosity subsolution.

Moreover, from Theorem \ref{condicion-borde.77} we obtain that
the half-relaxed limit satisfies
$$
\overline{u}^g \leq g
$$ 
on $\partial \Omega$ (we have a uniform barrier
at every boundary point that implies that $u_s \leq g$ on $\partial \Omega$). 

For the general case of $N$ eigenvalues, we just observe similar arguments to 
the previous ones, imply that the limit
\[\lim_{\delta \to 0^+}\lim_{s\to 1^-}c(s)E^s_{z_{k},\delta} (u^g_s, \phi, x_{s_k}) = \langle D^2\phi(\bar x) \hat z, \hat z \rangle \]
allows us to conclude that 
$$
\begin{array}{l}
\displaystyle
 \lim_{\delta \to 0^+}\lim_{s\to 1^-}\max_{{dim} (S) = N-i+1}
       \min_{ \substack{z \in S, \\ |z| =1}} c(s) E^s_{z,\delta} (u^g_s, \phi, x_s)
       \\[10pt] \qquad \displaystyle
       = \max_{{dim} (S) = N-i+1}\min_{ \substack{z \in S, \\ |z| =1}} 
       \langle D^2\phi(\bar x) z, z \rangle,
       \end{array}
$$
 from where the result follows as before.
\end{proof}

\begin{remark}
Analogously, we can obtain that the half-relaxed lower limit of supersolutions $u_s$, given by,
\[\underline{u}(x):= \liminf_{\subalign{s&\to 1^-\\y&\to x}}u_s(y), \] 
 is a supersolution 
 to the limit problem \eqref{Dirich-local-Lapla.sect}. 
 \end{remark}

Now, we are ready to prove the convergence result.

\begin{theorem}[Harmonic convergence] Let $u_s$ be a viscosity solution to our problem
for the trace fractional Laplacian,
\eqref{Main-eq-lim}. Then $u_s$ converge uniformly to the solution $u$ of the 
Dirichlet problem for the local Laplacian, \eqref{Dirich-local-Lapla.sect}.

It also holds that the solutions to the mid-range fractional Laplacian
converge to the solution to the limit problem \eqref{Dirich-local-Lapla.N=2.sect}.
\end{theorem}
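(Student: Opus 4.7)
The plan is the standard half-relaxed limits argument of Barles--Perthame, which we can now execute because the two ingredients we need have already been assembled above. First, the uniform bound $\|u_s\|_\infty \le \|g\|_\infty$ (coming from the comparison principle with the constant super- and subsolutions $\pm\|g\|_\infty$) guarantees that
\[
\overline{u}(x):=\limsup_{\substack{s\to 1^-\\ y\to x}}u_s(y),\qquad \underline{u}(x):=\liminf_{\substack{s\to 1^-\\ y\to x}}u_s(y)
\]
are well-defined, bounded, upper- and lower-semicontinuous functions on $\overline{\Omega}$, with $\underline{u}\le \overline{u}$ everywhere by definition.

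The previous theorem gives that $\overline{u}$ is a viscosity subsolution of $\Delta u=0$ in $\Omega$, and the analogous statement (noted in the remark just above) gives that $\underline{u}$ is a viscosity supersolution. What I would next pin down is the boundary behaviour: since the barrier construction in Theorem~\ref{condicion-borde.77} is built from the $C^2$ signed distance and is uniform in $s\in(1/2,1)$ (the constants $k_1,k_2$ defining $b$ do not depend on $s$, and $c(s)$ is absorbed once we have $c(s)\sim(1-s)$), we get $u_s\le g$ and $u_s\ge g$ on $\partial\Omega$ with estimates that survive the passage $s\nearrow 1$. Hence $\overline{u}\le g\le \underline{u}$ on $\partial\Omega$. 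Applying the classical comparison principle for the Dirichlet problem~\eqref{Dirich-local-Lapla.sect} (here one uses that $\overline{u}$ is USC, $\underline{u}$ is LSC, and both inherit continuous boundary values) gives $\overline{u}\le \underline{u}$ in $\overline{\Omega}$. Combined with the trivial reverse inequality this yields
\[
\overline{u}\equiv \underline{u} \ =:\ u \quad\text{in } \overline{\Omega},
\]
and $u$ is characterised as the unique classical solution of~\eqref{Dirich-local-Lapla.sect}, so the limit does not depend on subsequences.

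It remains to upgrade the equality of half-relaxed limits to uniform convergence on $\overline{\Omega}$. This is routine: if $u_{s_k}$ did not converge uniformly to $u$, we could extract points $x_k\to x_\infty\in\overline{\Omega}$ with $|u_{s_k}(x_k)-u(x_\infty)|\ge\eta>0$, and then one of $\overline{u}(x_\infty)$ or $\underline{u}(x_\infty)$ would differ from $u(x_\infty)$, contradicting what we just proved. The mid-range case is handled by the same argument line by line, the only change being that the limit PDE is $\lambda_1(D^2u)+\lambda_N(D^2u)=0$ and we invoke the comparison principle for that degenerate elliptic equation instead.

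The step I expect to require the most care is the uniformity in $s$ of the boundary barrier from Theorem~\ref{condicion-borde.77}: one has to track the dependence of the constants $C_1,C_2$ produced there on $s$ and check that the contradiction argument still closes when $s\nearrow 1$, given that $c(s)\sim(1-s)$ rescales the integrals $I^1_{z,\delta}$ and $I^2_{z,\delta}$ differently. Everything else (the subsolution/supersolution properties of $\overline{u}$ and $\underline{u}$, the comparison for the local limit problem, and the passage from half-relaxed equality to uniform convergence) is either already in the paper or entirely standard.
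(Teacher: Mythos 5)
Your proposal is correct and follows essentially the same route as the paper: half-relaxed limits of $u_s$ are sub/supersolutions of the local limit problem (the previous theorem and its remark), the boundary barrier gives $\overline{u}\le g\le \underline{u}$ on $\partial\Omega$, comparison for the local problem forces $\overline{u}=\underline{u}=u$, and uniform convergence follows by the same compactness/contradiction argument the paper uses. Your extra attention to the $s$-uniformity of the barrier from Theorem~\ref{condicion-borde.77} is exactly the point the paper dispatches with its remark about a uniform barrier, so nothing is missing.
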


\begin{proof}
    By the previous result, we know half-relaxed limits of viscosity subsolutions and supersolutions  converge respectively to subsolutions and supersolutions of the equivalent Laplacian problem. Using the definition of the half-relaxed limit and comparison, 
   we conclude that the half-relaxed limits coincide
   $$
   \liminf_{\subalign{s &\to 1^- \\ y &\to x}} u_s(y)
   = \limsup_{\subalign{s &\to 1^- \\ y &\to x}} u_s(y) =u(x)$$
   and is the solution to \eqref{Dirich-local-Lapla.sect}
   (since it is both a sub and a supersolution). 
   Hence, we have
   that the solutions $u_s$ converge to the solution of the limit problem in the following sense, 
    \begin{equation}
        \lim_{\subalign{s &\to 1^- \\ y &\to x}} u_s(y) = u(x).
    \end{equation}
    
    We only have to check that the convergence is uniform. If the convergence was not uniform, for any sequence $s_n \nearrow 1$ there exists some $\varepsilon_0>0$ and a corresponding sequence $\{x_{s_n}\} \subset \Omega$ such that
    \[|u_{s_n}(x_{s_n}) - u(x_{s_n})|\geq \varepsilon_0\]
    But in a compact set $\overline{\Omega}$ we can assume $x_{s_n}$ converges to some $x_0 \in \Omega$ after taking a subsequence. Due to its definition, $\lim_n u_{s_n}(x_{s_n}) = u(x_0)$. We arrive at a contradiction and the convergence had to be uniform.
\end{proof}

\begin{remark} 
Observe that, our previous arguments show that, 
for $\phi \in C^2(\RR^N)$ with sufficient decay at infinity, the fractional $s-$\textit{eigenvalues} 
given by \eqref{eq:eigenvalues.sect} (with the constant given by
\eqref{cte.99}) converge to the corresponding eigenvalues of the Hessian 
\[
    \lim_{s \to 1^-} \Lambda_k^s \phi(x) = \lambda_k (D^2\phi)(x)  
\] 
with 
\[\lambda_k (D^2 \phi)(x) := 
\max_{{dim} (S) = N-k+1}
       \min_{\substack{z \in S, \\ |z| =1}} \langle D^2 \phi(x) z, z \rangle. \]
\end{remark}

        \section{Possible extensions} \label{sect-extension}
        
        In this last section we briefly comment on possible extensions 
        of our results. 
        
        \subsection{A nontrivial righthand side}
        One can obtain similar existence, uniqueness and comparison results for
         \begin{equation} \label{Main-eq-section.446677} 
        \left\{
             \begin{array}{ll}
                \displaystyle 
                \sum_{i=1}^N  \Lambda_i^s u (x) = f(x) , \qquad & x \in \Omega, \\[6pt]
                         u (x) = g (x), \qquad & x \in \mathbb{R}^N\setminus \Omega,
             \end{array}
             \right.
         \end{equation}  
         as long as $f$ is continuous in $\overline{\Omega}$.  
         For Hölder regularity results for the solutions 
         when $g=0$ and $s$ is close to 1 we refer to \cite{HolderRef}.
        
         \subsection{Equations with coefficients}
       
         One can introduce $x-$dependent coefficients and study
          \begin{equation} \label{Main-eq-section.44667799} 
        \left\{
             \begin{array}{ll}
                \displaystyle \sum_{i=1}^N a_i (x) \Lambda_i^s u (x) =0, \qquad & x \in \Omega, \\[6pt]
                         u (x) = g (x), \qquad & x \in \mathbb{R}^N\setminus \Omega,
             \end{array}
             \right.
         \end{equation}    
         c.f. \eqref{Main-eq-section.4466}.
         Notice that the mid-range fractional Laplacian is of this form, just take $a_1=1$, $a_N=1$ and
         $a_i=0$ for $i=2,...,N-1$ in the expression 
         $\sum_{i=1}^N a_i (x) \Lambda_i^s u (x) $.
         
         Here, to use our previous arguments, we need that $a_i$ are continuous in $\overline{\Omega}$
         and nonnegative with $a_1$ and $a_N$
         strictly positive.

         Associated with this idea of introducing coefficients in our model problem we can 
         define fractional Pucci operators (described in terms 
         of the fractional eigenvalues) considering, for two real constants $0<\theta \leq \Theta $,
         $$
         P^+_{\theta, \Theta} (u) (x) = \Theta \sum_{\Lambda_i (u) (x) >0}  \Lambda_i^s u (x)
         + \theta \sum_{\Lambda_i (u) (x) < 0}  \Lambda_i^s u (x)
         =  \sup_{\theta \leq a_i \leq \Theta} \sum_{i=1}^N a_i  \Lambda_i^s u (x) 
         $$
and 
      $$
         P^-_{\theta, \Theta} (u) (x) = \theta \sum_{\Lambda_i (u) (x) >0}  \Lambda_i^s u (x)
         +  \Theta \sum_{\Lambda_i (u) (x) < 0} \Lambda_i^s u (x)
         =  \inf_{\theta \leq a_i \leq \Theta} \sum_{i=1}^N a_i  \Lambda_i^s u (x). 
         $$   
         These operators are extremal operators in the class of fractional trace Laplacians
         with coefficients between $\theta$ and $\Theta$. 
         
         Existence, uniqueness and a comparison principle for the Dirichlet problem
         for the operators $P^+_{\theta, \Theta} (u) $ and $P^-_{\theta, \Theta} (u)$
         in $C^2$ domains with continuous and bounded exterior data can be proved as
         in Section \ref{sect-compar}.

         \subsection{Operators defined by $\max$/$\min$ in subspaces}
         Instead of fractional eigenvalues we can consider 
         fractional truncated Laplacians to define our operator. 
         
        For instance, let us consider
         $$
         T^+_j (u) (x) = \sup_{dim(S) =j} \int_{S} \frac{u(x+y) - u(x)}{|y|^{j+2s}} dy
         $$
         and 
         $$
         T^-_i (u) (x) = \inf_{dim(S) =i} \int_{S} \frac{u(x+y) - u(x)}{|y|^{i+2s}} dy.
         $$
         Notice that here we are computing the supremum (or the infimum) of
         fractional Laplacians of the function $u$ restricted to 
         subspaces of dimension $j$ and hence the
         singularity of the kernel is of the form $|y|^{-j-2s}$. 
         
         Then, with these operators at hand one can define a
         different version of the fractional Laplacian, 
         $$
         (-\Delta)^s_{j,i} (u) (x) = -T^+_j (u) (x) - T^-_i (u) (x),
         $$
         for two indexes $j,i$ such that $j+i=N$.

         As examples, in $\mathbb{R}^4$, we can look at   
  $$
(-\Delta)^s_{2,2} (u) = -\inf_{dim(S) =2} \int_{S}
\frac{u(x+y) - u(x)}{|y|^{2+2s}} dy -
\sup_{dim(S) =2} \int_{S} \frac{u(x+y) - u(x)}{|y|^{2+2s}} dy
$$
or to
 $$
(-\Delta)^s_{3,1} (u) =- \inf_{dim(S) =1} \int_{S}
\frac{u(x+y) - u(x)}{|y|^{1+2s}} dy -
\sup_{dim(S) =3} \int_{S} \frac{u(x+y) - u(x)}{|y|^{3+2s}} dy.
$$

Remark that the same procedure with the maximum and minimum of the
local usual Laplacian acting on subspaces gives
the Laplacian in the whole space. In fact, we have
$$
\inf_{dim(S) = j}
\Delta u |_S (x) = \sum_{l=N-j}^N \lambda_l (D^2 u) (x) 
$$
and 
$$
\inf_{dim(S) = i}
\Delta u |_S (x) = \sum_{l=1}^i \lambda_l (D^2 u) (x), 
$$
that is, the supremum over subspaces $S$ of dimension $j$ 
os the Laplacians of $u$ restricted to $S$ is given by
the sum of the $j$ largest eigenvalues of $D^2u(x)$ and similarly
the infimum among subspaces of dimension $i$ is the sum
of the smallest eigenvalues of $D^2u(x)$. 
Therefore, for any pair $(j,i)$ such that $j+i=N$ we have
$$
\inf_{dim(S) = i}
\Delta u |_S (x)
+ \sup_{dim(S) = j}
\Delta u |_S (x)= \Delta u (x).
$$

However, in the fractional setting the operators
$(-\Delta)^s_{2,2} (u)$ and $(-\Delta)^s_{3,1} (u)$ are {\it different}.

Finally, let us point out that we can also study 
$\max$/$\min$ procedures. For example, one
can consider, for $j\geq i$,
$$
         W^+_{j,i} (u) (x) = \sup_{{dim} (S) = j}
       \inf_{ \substack{T \subset S, \\ dim(T) =i }} 
        \int_{T} \frac{u(x+y) - u(x)}{|y|^{i+2s}} dy.
       $$
       Here we are computing the supremum among 
       subspaces $S$ of dimension $j$ of
       the infimum of fractional Laplacians (of dimension $i$) 
       of $u$ restricted to subspaces $T$ included in $S$.  
       Notice that the fractional eigenvalues $\Lambda_j (u) (x)$
       that we used here to define the trace fractional Laplacian
       are given by
       $$
       \Lambda_k^s u (x) = \max_{{dim} (S) = N-k+1}
       \min_{ \substack{z \in S, \\ |z| =1} }  \int_{\mathbb{R}} 
        \frac{u(x+tz)-u(x)}{|t|^{1+2s}} 
        \, dt,
    $$
    that corresponds to the previous setting 
    taking $j=N-k+1$ and $i=1$. 
    
    With these operators $W^+_{j,i} (u)$ we can
    obtain a fractional version of the Laplacian adding 
    them (taking care of the fact that the dimensions 
    of the corresponding subspaces add up to $N$), that is,
    just consider
    $$
    (-\widetilde{\Delta})_{(j_1,i_1)...(j_k,i_k)}^s (u) (x) =
    - \sum_{l=1}^k W^+_{j_l,i_l} (u) (x).
    $$
It should be interesting to know if there is a comparison 
principle for operators like this.

{\bf Acknowledgements.} J. D. Rossi was partially supported by 
			CONICET PIP GI No 11220150100036CO
(Argentina), PICT-2018-03183 (Argentina) and UBACyT 20020160100155BA (Argentina).

J. Ruiz-Cases was supported by the European Union's Horizon 2020
research and innovation programme under the Marie Sklodowska-Curie grant
agreement No.\,777822, and by grants
CEX2019-000904-S, PID2020-116949GB-I00, and RED-2022-134784-T, all three
funded by MCIN/AEI/10.13039/501100011033 (Spain).


     \end{document}